\newcommand{\field}[1]{\mathbf #1}
\newcommand{\mf}[1]{\mathfrak #1}
\newcommand{\mc}[1]{\mathcal #1}
\newcommand{\ms}[1]{\mathscr #1}
\newcommand{\widebar}[1]{\overline{#1}}
\newcommand{\R}{\field R}
\renewcommand{\L}{\field L}
\newcommand{\C}{\field C}
\newcommand{\F}{\field F}
\newcommand{\Z}{\field Z}
\newcommand{\Q}{\field Q}
\newcommand{\simto}{\stackrel{\sim}{\to}}
\newcommand{\shom}{\ms H\!om}
\newcommand{\rshom}{\R\shom}
\DeclareMathOperator{\Spec}{Spec}
\DeclareMathOperator{\spec}{Spec}
\DeclareMathOperator{\Spf}{Spf}
\DeclareMathOperator{\spf}{Spf}
\DeclareMathOperator{\Def}{Def}
\newcommand{\Td}{\operatorname{Td}}
\newcommand{\Mom}{s\ms D}
\newcommand{\sMom}{\operatorname{sD}}
\DeclareMathOperator{\Pic}{Pic}
\DeclareMathOperator{\D}{D}
\DeclareMathOperator{\pr}{pr}
\DeclareMathOperator{\Fil}{Fil}
\DeclareMathOperator{\Ext}{Ext}
\DeclareMathOperator{\NS}{NS}
\newcommand{\m}{\boldsymbol{\mu}}
\newcommand{\G}{\field G} %for the multiplicative and additive groups
\renewcommand{\H}{\operatorname{H}}
\DeclareMathOperator{\chern}{ch}
\renewcommand{\]}{]\!\hspace{0.03em}]}
\DeclareMathOperator*{\tensor}{\otimes}
\DeclareMathOperator*{\ltensor}{\stackrel{\field L}{\otimes}}
\DeclareMathOperator{\Tr}{\operatorname{Tr}}
\newcommand{\inj}{\hookrightarrow}
\newcommand{\id}{\operatorname{id}}
\DeclareMathOperator{\Hom}{\operatorname{Hom}}
\DeclareMathOperator{\M}{\operatorname{M}}
\DeclareMathOperator{\B}{\operatorname{B\!}}
\renewcommand{\mathbb}{\mathbf}
\newcommand{\twist}{\operatorname{tw}}
\newcommand{\Hcris}{H_{\text{\rm cris}}}
\newtheorem{lem}[subsection]{Lemma}
\numberwithin{equation}{subsection}
\newtheorem{thm}[subsection]{Theorem}
\newtheorem{prop}[subsection]{Proposition}
\newtheorem{cor}[subsection]{Corollary}
\theoremstyle{definition}
\newtheorem{defn}[subsection]{Definition}
\newtheorem{notation}[subsection]{Notation}
\theoremstyle{remark}
\newtheorem{remark}[subsection]{Remark}
\newtheorem{pg}[subsection]{}
\newcommand{\mls}[1]{\mathscr{#1}}
\numberwithin{equation}{subsection}
\newcommand{\Sp}{\text{\rm Spec}}
\author{Max Lieblich and Martin Olsson}
\title{Fourier-Mukai partners of \text{\rm K3}  surfaces in positive characteristic}
\begin{document}
\maketitle
\setcounter{tocdepth}{1}
\tableofcontents

\section{Introduction}
\label{sec:introduction}

In this paper we establish several basic facts about Fourier-Mukai equivalence of \text{\rm K3}  surfaces over fields of positive characteristic and develop some foundational material on deformation and lifting of Fourier-Mukai kernels, including the study of several ``realizations'' of Mukai's Hodge structure in standard cohomology theories (\'etale, crystalline, Chow, etc.).

In particular, we prove the following theorem, extending to positive characteristic classical results due to Hosono et al \cite{HLOY}, Mukai \cite{M}, and Orlov \cite{Orlov} in characteristic $0$.  For a scheme  $Z$ of finite type  over a field $k$, let $\D(Z)$ denote the bounded derived category with coherent cohomology.  For a \text{\rm K3}  surface $X$ over an algebraically closed field $k$, we have algebraic moduli spaces $\M_X(v)$ of sheaves with fixed Mukai vector $v$ (see section \ref{modulidef} for the precise definition) that are stable with respect to a suitable polarization.

\begin{thm}\label{T:mainthm}
Let $X$ be a \text{\rm K3}  surface over an algebraically closed field $k$ of positive characteristic $\neq 2$.
\begin{enumerate}
\item If $Y$ is a smooth projective $k$-scheme such that there exists an equivalence of triangulated categories $\D(X)\simeq \D(Y)$, then $Y$ is a \text{\rm K3} surface isomorphic to $\M_X(v)$ for some Mukai vector $v$ such that there exists a Mukai vector $w$ with $\langle v, w\rangle = 1$.
\item There exist only finitely many smooth projective varieties $Y$ over $k$ with $\D(X)\simeq \D(Y)$.
\item If $X$ has Picard number at least 11 and $Y$ is a smooth projective $k$-scheme with $\D(Y)\simeq \D(X)$, then $X\simeq Y$.  In particular, any Shioda-supersingular \text{\rm K3} surface is determined up to isomorphism by its derived category.
\end{enumerate}

\end{thm}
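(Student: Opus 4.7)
The plan is to reduce each of the three statements to a question about the Mukai lattice $\widetilde{\H}(X)$ in one of the cohomological realizations developed earlier in the paper together with moduli spaces $\M_X(v)$ of stable sheaves, following the Mukai--Orlov/HLOY template but substituting crystalline/Frobenius or characteristic-zero-lifting input where Hodge theory would normally be used. For part (1), I would first show that $Y$ is itself a K3 surface: the derived equivalence preserves the Serre functor and Hochschild homology, yielding $\dim Y=2$, $\omega_Y\cong\mathcal{O}_Y$, and $\chi(\mathcal{O}_Y)=2$; in $\ch k\neq 2$ the last of these forces $h^{0,1}(Y)=0$, ruling out abelian, bielliptic, and Enriques surfaces. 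Then, invoking a positive-characteristic Orlov-type representability theorem, the equivalence is induced by a kernel $P\in\D(X\times Y)$, and a pointwise Mukai-style analysis identifies (after a uniform shift) each fibre $\L i_y^*P$ with a stable sheaf on $X$ of a fixed primitive Mukai vector $v$ with $v^2=0$. The resulting classifying morphism $Y\to\M_X(v)$ is then checked to be an isomorphism on tangent spaces via Kodaira--Spencer and on closed points via the equivalence's action on skyscrapers. The global existence of the kernel $P$ as a universal family is precisely equivalent to the existence of $w\in\widetilde{\H}(X)$ with $\langle v,w\rangle=1$, so that condition comes for free.

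\textbf{Part (2).} Given (1), FM partners of $X$ correspond bijectively to equivalence classes of primitive Mukai vectors $v\in\widetilde{\H}(X)$ with $v^2=0$ and $\langle v,w\rangle=1$ for some $w$, under the relation $v\sim v'$ when $\M_X(v)\cong\M_X(v')$. Finiteness has two ingredients. First, there are only finitely many orbits of such vectors under the subgroup $G\subset O(\widetilde{\H}(X))$ preserving the relevant extra structure (F-isocrystal in the crystalline realization, or Galois module structure $\ell$-adically), which follows from standard finiteness of double cosets for arithmetic groups and class-number finiteness for indefinite lattices of fixed signature. Second, a positive-characteristic derived Torelli theorem is needed to show that a $G$-isometry sending $v$ to $v'$ lifts to an autoequivalence of $\D(X)$ carrying $\M_X(v)$ isomorphically onto $\M_X(v')$. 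Together these give finiteness of FM partners up to isomorphism.

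\textbf{Part (3) and main obstacle.} For $\rho(X)\geq 11$, the transcendental part $T_X\subset\widetilde{\H}(X)$ has rank at most $13$, small relative to the ambient rank $24$. Nikulin's uniqueness theorem for primitive embeddings (in a form adapted to the discriminant groups arising in positive characteristic) then forces a unique $G$-orbit of primitive isotropic vectors with the required divisibility, so the vector $v$ from (1) is $G$-equivalent to $(1,0,0)$ and $Y\cong\M_X(1,0,0)\cong X$. In the Shioda-supersingular case ($\rho=22$), the Mukai lattice is highly degenerate over $\Z$ and one substitutes Ogus's crystalline Torelli theorem: the derived equivalence produces a Frobenius-equivariant isometry of crystalline Mukai lattices, which Ogus's theorem upgrades to a geometric isomorphism $X\cong Y$. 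I expect the main obstacle to lie in this last step: verifying that the Mukai isometry produced by a derived equivalence carries enough additional structure (Frobenius-equivariance, compatibility with the ample/positive cone) to feed into Ogus's Torelli in the supersingular regime, and adapting Nikulin-type embedding arguments to N\'eron--Severi lattices in positive characteristic, whose discriminant groups can have a large $p$-torsion component in the intermediate range $11\le\rho<22$.
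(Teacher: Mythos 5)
There are genuine gaps, and they sit exactly where the paper's real work is done. In part (1), your pivotal step is that ``a pointwise Mukai-style analysis identifies (after a uniform shift) each fibre $\L i_y^*P$ with a stable sheaf on $X$ of a fixed primitive Mukai vector $v$,'' giving a classifying map $Y\to \M_X(v)$. This is false as stated and cannot be fixed by a pointwise argument: for a general kernel (e.g.\ a spherical twist composed with anything) the fibres $P_y$ are honest two-term complexes, and even when they are sheaves there is no a priori polarization for which they are stable. In the classical Mukai--Orlov proof one does \emph{not} show the fibres are sheaves; one composes $\Phi_P$ with the equivalence coming from a tautological family on $X\times M_X(v)$, observes that the resulting isometry sends $(0,0,1)$ to $(0,0,1)$ (is ``filtered''), and then invokes the Torelli theorem. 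In positive characteristic there is no Torelli theorem of that kind, and the paper's substitute is its Theorem \ref{T:derived torelli}: a filtered Fourier--Mukai equivalence forces $X\simeq Y$, proved by realizing $Y$ as an open piece of the moduli stack of complexes on $X$ (Proposition \ref{P:open in mother}), deforming the kernel along a Deligne lift of $X$ to characteristic $0$, algebraizing via Grothendieck existence, applying the Hodge-theoretic Torelli over $\C$, and descending with Matsusaka--Mumford. Nothing playing this role appears in your proposal, and without it the reduction of part (1) to moduli spaces does not close.

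For parts (2) and (3) your strategy is the characteristic-zero lattice-theoretic one (counting orbits of isotropic vectors, Nikulin embeddings, and, in the supersingular case, Ogus's crystalline Torelli), but the inputs it needs do not exist here. The crystalline and $\ell$-adic realizations of the Mukai motive constructed in the paper are only \emph{rational} ($F$-isocrystals over $K$, resp.\ $\Q_\ell$-modules after applying $\Phi$ with $\sqrt{\Td}$); the paper explicitly notes that the Tate twist obstructs an integral Mukai lattice, so there is no lattice on which to run class-number or arithmetic-group finiteness, and in the supersingular case the Frobenius structure is slope-pure and carries almost no information, so the ``isometry group preserving the extra structure'' is far too large. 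Moreover the converse direction you need --- lifting an abstract isometry to an autoequivalence, or feeding a derived-equivalence isometry into Ogus's Torelli --- requires integral, ample-cone-compatible data that a Fourier--Mukai kernel does not obviously provide in characteristic $p$. The paper avoids all of this: finiteness is proved by lifting to characteristic $0$ (a lift with full Picard group in the finite-height case, a deformation to finite height when $X$ is supersingular of small Picard number), quoting Bridgeland--Maciocia/HLOY there, and using uniqueness of specialization of K3 surfaces (Matsusaka--Mumford, Lemma \ref{L:unique spec}); the Shioda-supersingular case is handled not by crystalline Torelli but by choosing the Mukai vector $v$ cleverly (Lemma \ref{L:weirdness}) so that the pair $(X,Y=\M_X(v))$ lifts with geometric generic Picard lattice of rank $3$ and square-free discriminant, where \cite[Corollary 2.7(2)]{HLOY} forces the generic fibers, hence the special fibers, to be isomorphic. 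So while your outline correctly identifies the supersingular regime as the hard point, the mechanism you propose for it (and for finiteness generally) would not go through, and the deformation-of-kernels machinery that actually powers the paper is absent.
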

The classical proofs of these results in characteristic $0$ rely heavily on the Torelli theorem and lattice theory, so a transposition into characteristic $p$ is necessarily delicate.  We present here a theory of the ``Mukai motive'', generalizing the Mukai-Hodge structure to other cohomology theories, and use various realizations to aid in lifting derived-equivalence problems to characteristic $0$.

These techniques also yield proofs of several other results.  The
first answers a question of Musta\c{t}\u{a} and Huybrechts, while the
second establishes the truth of the  variational crystalline Hodge
conjecture \cite[Conjecture 9.2]{MP} in some special cases. (In the
course of preparing this manuscript, we learned that Huybrechts
discovered essentially the same proof of Theorem \ref{T:horse}, in
$\ell$-adic form.)

\begin{thm}\label{T:horse}
If $X$ and $Y$ are \text{\rm K3} surfaces over a finite field $\F$ of characteristic $\neq 2$ such that $\D(X)$ is equivalent to $\D(Y)$, then $X$ and $Y$ have the same zeta-function.  In particular, $\# X(\F)=\# Y(\F).$
\end{thm}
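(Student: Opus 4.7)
The plan is to reduce the equality of zeta functions to the existence of a Galois-equivariant isomorphism of $\ell$-adic Mukai realizations induced by the Fourier-Mukai kernel.

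First I would fix, for some prime $\ell\neq\ch(\mathbb F)$, a Fourier-Mukai kernel $P\in\D(X\times_{\mathbb F}Y)$ realizing the equivalence, by Orlov-type representability combined with descent from $\bar{\mathbb F}$ (or directly from the foundational material on kernels developed earlier in the paper). Applying the $\ell$-adic realization of the Mukai motive constructed in the earlier sections then yields an isomorphism of Galois modules
$$\Phi^{\widetilde H}_P\colon \widetilde H_\ell(X_{\bar{\mathbb F}})\risom \widetilde H_\ell(Y_{\bar{\mathbb F}}),$$
whose Galois-equivariance is inherited from the fact that $P$ is defined over $\mathbb F$.

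Next I would translate this into an equality of characteristic polynomials of geometric Frobenius $\varphi$. The Mukai realization of a K3 surface $Z$ over $\bar{\mathbb F}$ is, up to fixed Tate twists, $H^0_\ell(Z)\oplus H^2_\ell(Z)\oplus H^4_\ell(Z)$; Frobenius acts on the $H^0$ and $H^4$ summands by fixed scalars depending only on $q$, so the characteristic polynomial of $\varphi$ on $\widetilde H_\ell(Z_{\bar{\mathbb F}})$ factors as $f(t)\cdot P_2(Z,t)$ with $f(t)$ depending only on $q$ and $P_2(Z,t)=\det(1-\varphi t\mid H^2_\ell(Z))$. The Galois-equivariance of $\Phi^{\widetilde H}_P$ forces the characteristic polynomials on the two Mukai realizations to coincide, and dividing by $f(t)$ yields $P_2(X,t)=P_2(Y,t)$. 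Combined with the trivial equalities $P_0=1-t$, $P_4=1-q^2t$ and the vanishing of odd $\ell$-adic cohomology for K3 surfaces, the Grothendieck--Lefschetz trace formula gives $\zeta_X=\zeta_Y$, and in particular $\#X(\mathbb F)=\#Y(\mathbb F)$.

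The main obstacle, granting the earlier material on Mukai realizations, is securing the Galois-equivariance of $\Phi^{\widetilde H}_P$: one must arrange that a Fourier-Mukai kernel descends to $\mathbb F$ and that the induced cohomological correspondence truly commutes with the Galois action. This should reduce to an instance of Orlov-type representability together with a formal descent argument, both of which should be available from the foundational sections of the paper. A minor but worth-noting point is that the argument is insensitive to the specific Tate-twist conventions chosen for $\widetilde H_\ell$, since any such conventions enter only through the common polynomial $f(t)$, which divides out.
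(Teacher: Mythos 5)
Your argument is correct, and it is essentially the $\ell$-adic form of the paper's proof --- the variant the introduction explicitly attributes to Huybrechts --- so the two differ only in the choice of realization and in bookkeeping. The paper's Section 8 proof takes the kernel $P\in\D(X\times Y)$ furnished directly by Orlov's theorem (since the equivalence is assumed $\F$-linear, no descent from $\overline{\F}$ is needed, which disposes of the ``main obstacle'' you flag: the Galois/Frobenius-equivariance of $\Phi_P$ is automatic once $P$ lives on $X\times_{\F}Y$, and for the \'etale realization this is already recorded in the paper's section on the \'etale realization), then passes to the crystalline realization: $\Phi_P$ is an isomorphism of $F$-isocrystals $\widetilde H(X/K)\simto\widetilde H(Y/K)$, and instead of comparing characteristic polynomials the paper compares traces, using the identity $\Tr(F\mid\widetilde H(X/K))=\Tr(F\mid\Hcris^2(X/K))+2p$ (and likewise for $Y$, for all powers of Frobenius), which plays exactly the role of your factor $f(t)$ coming from the $H^0(-1)$ and $H^4(1)$ summands; the Lefschetz fixed-point formula in crystalline cohomology then replaces the \'etale one. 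What your route buys is that it stays inside the familiar Weil-conjectures formalism and needs only the $\ell$-adic cycle class map and its Frobenius equivariance; the paper's crystalline route instead invokes the Gillet--Messing cycle class and the $F$-isocrystal structure, which has the advantage of dovetailing with the crystalline Mukai-motive machinery used for the lifting results elsewhere in the paper. Your observation that the Tate-twist conventions only affect the common factor $f(t)$, and your passage through characteristic polynomials rather than traces (which gives the full zeta function for all finite extensions in one stroke), are both fine.
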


\begin{thm}\label{T:chicken}
Suppose $X$ and $Y$ are \text{\rm K3}  surfaces over an algebraically closed field $k$ of positive characteristic $\neq 2$  with Witt vectors $W$, and that $\mc X/W$ and $\mc Y/W$ are lifts, giving rise to a Hodge  filtration on the $F$-isocrystal
$\Hcris^4(X\times Y/K),$
where $K$ denotes the field of fractions of $W$.
Suppose $Z\subset X\times Y$ is a correspondence coming from a Fourier-Mukai kernel. If the fundamental class of $Z$ lies in $\Fil^2\Hcris^4(X\times Y/K)$ then $Z$ is the specialization of a cycle on $\mc X\times_W\mc Y$.
\end{thm}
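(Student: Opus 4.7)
The plan is to lift the underlying Fourier-Mukai kernel $P \in \D(X\times Y)$ to $\D(\mc X\times \mc Y)$ and then read off the desired cycle from its Chern character. By hypothesis $[Z] \in \Hcris^4(X\times Y/K)$ appears as the codimension-$2$ component of the Mukai vector $v(P)$ in the crystalline realization of the Mukai motive developed earlier in the paper. A lift $\widetilde P$ of $P$ to $\mc X\times \mc Y$ will have a well-defined crystalline/Chow Chern character, and the codimension-$2$ piece of this character is then a cycle class on $\mc X\times \mc Y$ specializing to $[Z]$.

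The obstruction theory I would use is the Atiyah-class formalism for perfect complexes, in the spirit of Illusie. Writing $\kappa_n$ for the Kodaira-Spencer class of the deformation $(\mc X\times \mc Y)_{n+1}$ of $(\mc X\times \mc Y)_n$ over $W_{n+1} \twoheadrightarrow W_n$, the obstruction to deforming a lift $P_n$ to $(\mc X\times \mc Y)_{n+1}$ is the cup product $o_n = \kappa_n \cdot A(P_n) \in \Ext^2_{(\mc X\times \mc Y)_n}(P_n, P_n)$, where $A(P_n)$ is the Atiyah class. The compatibility of the Atiyah class with the crystalline Chern character identifies the trace of $o_n$ with the pairing of $\kappa_n$ against $v(P_n)$ in crystalline cohomology, which by Griffiths transversality is controlled by the Hodge filtration on $\Hcris^4$.

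The condition $[Z] \in \Fil^2\Hcris^4(X\times Y/K)$ is exactly what is needed to kill these obstructions inductively. The other graded components of $v(P)$ are pulled back from $X$ or $Y$ individually and are unobstructed by the existence of the lifts $\mc X$ and $\mc Y$, so the whole of $v(P)$ respects the Hodge filtration; hence the connecting derivative of $v(P)$ vanishes at every order, and each trace of $o_n$ is cohomologically trivial. Once the full obstructions vanish, Lieblich's algebraization for $\D^b$ assembles the compatible system $\{P_n\}$ into a global lift $\widetilde P \in \D(\mc X\times \mc Y)$, and the codimension-$2$ part of $\chern(\widetilde P) \in \Chow^*(\mc X\times \mc Y)_{\Q}$ specializes to $[Z]$.

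The main technical obstacle will be the passage from vanishing of the trace of $o_n$ (a cohomological statement in the crystalline realization) to vanishing of the full obstruction $o_n \in \Ext^2(P_n, P_n)$. For this I would lean on the assumption that $P$ is genuinely a Fourier-Mukai kernel, together with the strong control over $\Ext^\bullet(P, P)$ coming from the Mukai-motive framework: the various realization functors identify the trace-free part of $\Ext^2$ with pieces of cohomology on $X$ and $Y$ separately, which lift by virtue of the existence of $\mc X/W$ and $\mc Y/W$, so only the trace part can obstruct deformations of $P$. Combining this rigidity with the $\Fil^2$ hypothesis should complete the argument.
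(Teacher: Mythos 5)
The decisive step in your plan --- passing from the vanishing of the trace of $o_n$ to the vanishing of the full obstruction $o_n\in\Ext^2(P_n,P_n)$ --- is exactly where the content of the theorem lies, and your proposed justification (``only the trace part can obstruct deformations of $P$, because the trace-free part is identified with cohomology of $X$ and $Y$ separately, which lift'') is not correct. A test case makes this plain: take $Y=X$, $P=\ms O_{\Delta_X}$, and let $\mc X$ and $\mc Y$ be two \emph{different} lifts of $X$ over $W$. The determinant of $P$ is trivial, so the trace of every obstruction class vanishes identically; nevertheless $\ms O_{\Delta_X}$ does not deform to $\mc X\times_W\mc Y$ unless the two lifts agree to the relevant order. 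The obstruction lives in the middle (``$H^1(T_X)$''-type, Hochschild) part of $\Ext^2(P,P)\simeq \HH^2(X)$, not in the trace part, and it measures the discrepancy between the period/Kodaira--Spencer data of the two chosen lifts transported through the correspondence. The mere existence of $\mc X/W$ and $\mc Y/W$ gives no control over this component; what controls it is precisely the compatibility encoded by the hypothesis $[Z]\in\Fil^2\Hcris^4(X\times Y/K)$ (equivalently, the $H^2\otimes H^2$ K\"unneth component, i.e.\ that $\Phi$ matches the two Hodge filtrations). Your argument wires the hypothesis only into the trace via Griffiths transversality, so the part of the obstruction that can actually be nonzero is never addressed. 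Repairing this along your lines would require identifying the full obstruction, under an HKR-type isomorphism, with the failure of the cohomological transform to intertwine the filtrations of the two given lifts --- a delicate matter in characteristic $p$ that you would have to supply.

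A secondary unjustified step: the claim that the remaining graded components of $v(P)$ ``are unobstructed by the existence of the lifts'' is false as stated --- an invertible sheaf on $X$ need not lift to a \emph{given} lift $\mc X$; its liftability is the condition $c_1\in\Fil^1$, which in the paper is deduced from the hypothesis using that $\Phi$ is an isometry and that $\Fil^1$ is the orthogonal complement of $\Fil^2$ under the Mukai pairing. For contrast, the paper's proof (Theorem \ref{T:kernel lift} together with the remark following it) avoids obstruction calculus altogether: using Proposition \ref{P:open in mother} it embeds $X$ as an open subspace of the moduli space of complexes on $Y$ relative to the lift $Y_W$, which produces a canonical formal lift of $X$ carrying a (twisted) lift of the kernel essentially for free; the filtration hypothesis is then used once, to identify this canonical formal lift with $\widehat X_W$, after which Grothendieck existence for perfect complexes algebraizes the kernel and its Chern character supplies the lifted cycle. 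Your order-by-order Atiyah-class approach is a genuinely different route, but as written it has a gap at its crux.
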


Throughout this paper we consider only fields of characteristic $\neq 2$.

\subsection{Outline of the paper}

Sections 2 and 3 contain foundational background material on
Fourier-Mukai equivalences.  In section 2 we discuss variants in other
cohomology theories (\'etale, crystalline, Chow) of Mukai's original
construction of a Hodge structure associated to a smooth even
dimensional proper scheme.  In section 3 we discuss various basic
material on kernels of Fourier-Mukai equivalences.  The main technical
tool is Proposition \ref{P:adjoint-love}, which will be used when
deforming kernels.  The results of these two sections are presumably
well-known to experts. 

As an application of the formalism of Mukai motives we prove Theorem \ref{T:horse} in section 4.

In section 5 we discuss the relationship between moduli of complexes
and Fourier-Mukai kernels.  This relationship is the key to the
deformation theory arguments that follow and appears never to have
been written down in this way.  The main result of this
section is Corollary  \ref{P:open in mother}.

Section 6 contains the key result for the whole paper (Theorem
\ref{T:derived torelli}).  This result should be viewed as a derived
category version of the classical Torelli theorem for \text{\rm K3}  surfaces. It
appears likely that this kind of reduction to the universal case via
moduli stacks of complexes should be useful in other contexts.

Using these deformation theory techniques we prove Theorem \ref{T:chicken} in section 7 and 
 statement (1) in Theorem \ref{T:mainthm} in section 8.

In section 9 we prove statement (2) in Theorem \ref{T:mainthm}.  Our
proof involves deforming to characteristic $0$, which in
particular is delicate for supersingular \text{\rm K3}  surfaces.

Finally there is an appendix containing a technical result about
versal deformations of polarized \text{\rm K3}  surfaces which is used in section
7. The main result of the appendix is Theorem \ref{P:A.1} concerning
the Picard group of the general deformation of a fixed \text{\rm K3}  surface from
characteristic $p$ to characteristic $0$.

\subsection{Notation}
For a proper smooth scheme $X$ over a field $k$ we write $K(X)$ for the Grothendieck group of vector bundles on $X$ and $A^*(X)_\Q$ for the Chow ring of algebraic cycles on $X$ modulo rational equivalence.    We write $\text{ch}:K(X)\rightarrow A^*(X)_\Q$ for the Chern character.

\subsection{Acknowledgments} Lieblich partially supported by the
Sloan Foundation, NSF grant DMS-1021444, and NSF CAREER grant
DMS-1056129, Olsson partially supported by NSF CAREER grant
DMS-0748718 and NSF grant DMS-1303173. We thank Andrew Niles, Daniel Huybrechts, Davesh Maulik, Richard Thomas, and two anonymous referees  for many helpful comments and error-correction.

\section{Mukai motive}
\label{sec:mukai-crystals}

\subsection{Mukai's original construction over $\C$: the Hodge realization \cite{M}}

Suppose $X$ is a smooth projective variety of even dimension $d=2\delta$.  The singular cohomology $\H^i(X,\Z)$ carries a natural pure Hodge structure of weight $i$, and the cup product defines a pairing of Hodge structures
$$\H^i(X,\Z)\times\H^{2d-i}(X,\Z)\to\H^{2d}(X,\Z)=\Z(-d),$$
where $\Z(-1)$ is the usual Tate Hodge structure of weight $2$.

Define the \emph{(even) Mukai-Hodge structure of $X$\/} to be the pure Hodge structure of weight $d$ given by
$$\widetilde H(X,\Z):=\bigoplus_{i=-\delta}^{\delta} \H^{d+2i}(X,\Z)(i).$$

The cup product and the identification $\H^{2d}(X,\Z)\cong\Z(-d)$ yield the \emph{Mukai pairing\/}
$$\widetilde H(X,\Z)\times\widetilde H(X,\Z)\to\Z(-d)$$
defined by the formula
$$
\langle (a_{-\delta }, a_{-\delta +1}, \dots, a_{\delta -1}, a_\delta ), (a'_{-\delta }, a'_{-\delta +1}, \dots, a'_{\delta -1}, a'_\delta )\rangle := \sum _{i=-\delta }^\delta (-1)^ia_{i}\cdot a_{-i}',
$$
where $a_i\in H^{d+2i}(X, \Z)(i)$. Similarly we get a rational Hodge structure $\widetilde H(X, \Q)$, replacing $\Z$ by $\Q$ in the preceding discussion. 

One of the main features of the Mukai-Hodge structure is its compatibility with correspondences.  In particular, let $Y$ be another smooth projective variety of dimension $d$. A perfect complex $P$ on $X\times Y$  induces a map of Hodge lattices
$$\Psi_P:\widetilde H(X,\Q)\to\widetilde H(Y,\Q)$$
given by adding the maps
$$
\Psi _P^{i,j}:H^{d +2i}(X,\Q)(i)\rightarrow H^{d  +2j}(Y,\Q)(j)
$$
defined as the composite
$$
\xymatrix{
H^{d +2i}(X,\Q)(i)\ar[r]^-{\text{pr}_1^*}& H^{d +2i}(X\times Y,\Q)(i)\ar[d]^-{\cup \text{\rm ch}^{ j-i+d}(P)}\\
H^{d +2j}(Y,\Q)(j)& H^{d +2j+2d}(X\times Y,\Q)(j+d)\ar[l]_-{\text{pr}_{2*}}.}
$$
  Note  that this map depends only on the image of $P$ in the Grothendieck group $K(X\times Y)$.  In general this map is defined only with rational coefficients, due to the presence of denominators in the Chern character, but for \text{\rm K3} surfaces it is defined integrally.

Instead of considering $\Psi _P$ it is standard to work with  $\Phi_P=\Psi_{P\sqrt{\Td_{X\times Y}}}$.  A discussion of the reasons for this normalization can be found in \cite[pp. 127--130]{H}. This modified map
$$\Phi_P:\widetilde H(X,\Q)\to\widetilde H(Y,\Q)$$
 has the advantage of making the transform compatible with inner products and Chern class maps.

Mukai's original work in \cite{M} was on the cohomology of \text{\rm K3}  surfaces.  For such a surface $X$, the Mukai-Hodge structure is $$\H^0(X,\Z)(-1)\oplus\H^2(X,\Z)\oplus\H^4(X,\Z)(1)$$
(colloquially rendered as ``place $\H^0$ and $\H^4$ in $\widetilde H^{1,1}$''), and the Mukai pairing takes the form
$$\langle(a,b,c),(a',b',c')\rangle=bb'-ac'-a'c.$$
Note that this pairing restricts to the usual pairing on $\H^2(X, \Z)$.
Moreover, the class $\sqrt{\Td_{X\times Y}}$ lies in $K(X\times Y)$ (i.e., it has integral components), so that for all pairs of \text{\rm K3}  surfaces $X$ and $Y$, any $P\in K(X\times Y)$ induces a map of rank 24 lattices
$$\Phi_P:\widetilde H(X,\Z)\to\widetilde H(Y,\Z).$$
As Mukai and Orlov proved in their seminal work (see \cite[3.3]{Orlov}), the Mukai-Hodge structure of a \text{\rm K3}  surface uniquely determines its derived category up to (non-canonical) equivalence.  In fact, they showed that the transcendental lattice alone suffices to determine the derived category.

The previous constructions generalize immediately to any Weil cohomology theory.  The main ones we consider are the following:

\subsection{Crystalline realization}

\begin{remark} For a survey of basic properties of crystalline cohomology see \cite{Ill}.
\end{remark}

Let $k$ be a perfect field of characteristic $p>0$, let $W$ be its ring of Witt vectors, and let $K$ denote the field of fractions of $W$.  For a proper smooth scheme $X/k$ let 
$H^i(X/K)$ denote
 the crystalline cohomology
$$
H^i(X/K):= H^i((X/W)_{\text{cris}}, \mls O_{X/W})\otimes _WK,
$$
an $F$-isocrystal over $K$.

Following standard conventions, let $K(1)$ denote the $F$-isocrystal whose underlying vector space is $K$, and whose Frobenius action is given by multiplication by $1/p$.  If $M$ is another isocrystal and $n$ is an integer we write $M(n)$ for the tensor product $M\otimes K(1)^{\otimes n}$ (with the usual convention that if $n$ is negative then $K(1)^{\otimes n}$ denotes the $-n$-th tensor power of the dual of $K(1)$).

For a proper smooth $k$-scheme $X$ there is a \emph{crystalline Chern character} 
$$
\text{ch}_{\text{cris}} :K(X)\rightarrow H^{2*}(X/K),
$$
obtained by composing the cycle class map $\eta :A^*(X)\rightarrow H^{2*}(X/K)$ (defined in \cite{G-M} and \cite{Gros}) with the Chern character $K(X)\rightarrow A^*(X)_\Q$.  See also \cite{Pet} for a discussion of chern classes in rigid cohomology.
 For an integer $i$ we write $\text{ch}_{\text{cris}}^i$ for the $2i$-th component of $\text{ch}_{\text{cris}}.$  Using the splitting principle to reduce to the case of a line bundle, one shows that for any integer $i$ and $E\in K(X)$ we have
$$
\varphi _X(\text{\rm ch}_{\text{\rm cris}}^i(E)) = p^i\text{\rm ch}_{\text{\rm cris}}^i(E).
$$

If $X/k$ is proper and smooth of even dimension $d = 2\delta $, set
$$
\widetilde H^i(X/K):= H^{d +2i}(X/K)(i), \ \ -\delta \leq i\leq \delta ,
$$
and define the \emph{Mukai $F$-isocrystal of $X/K$} to be the $F$-isocrystal
$$
\widetilde H(X/K):= \oplus _i\widetilde H^i(X/K).
$$

Then using the same formulas as in the Betti cohomology setting, there is a pairing $\langle \cdot, \cdot \rangle $ on $\widetilde H(X/K)$. Also if  $Y$ is a second smooth proper $k$-scheme of the same dimension as $X$ and $P\in K(X\times Y)$ is an object, then we get an isometry     of Mukai $F$-isocrystals
$$
\Phi _P:\widetilde H(X/K)\rightarrow \widetilde H(Y/K)
$$
using the crystalline Chern character multiplied by the square root of   the Todd class $\sqrt {\text{Todd}(X\times Y)}$.

\subsection{\'Etale realization}\label{S:etale realization}

Let $k$ be a field of characteristic $p$, and
fix a prime $\ell$ distinct from $p$.   Fix also a separable closure $k\hookrightarrow \bar k$, and let $G_k$ denote the Galois group of $\bar k$ over $k$. The \'etale realization of the Mukai motive is given by the  $G_k$-modules
$$
\widetilde H^i(X_{\bar k}, \Z_\ell ):= H^{d+2i}(X_{\bar k},\Z_\ell)(i), \ \ \widetilde H(X,\Z_\ell):=\oplus _i\widetilde H^i(X_{\bar k}, \Z_\ell ), \ \  -\delta\leq i\leq \delta,
$$
where $H^j(X_{\bar k}, \Z_\ell )$ denotes \'etale cohomology. 

The cycle class maps $A^i(X)\to\H^{2i}(X_{\bar k},\Z_\ell(i))$, Gysin maps, etc., yield identical funtorialities to the crystalline case, and   the usual formula yields a Mukai pairing.

When $X$ is defined over a finite field $\F_q$, the $q$th-power Frobenius gives an action of the arithmetic (and geometric) Frobenius on  $\widetilde H(X_{\bar k},\Z_\ell)$.  Given $X$, $Y$, and $P\in\D(X\times Y)$, all defined over $\F_q$, we get a Frobenius-invariant map
$$\Psi:\widetilde H(X_{\bar k},\Q_\ell)\to\widetilde H(Y_{\bar k},\Q_\ell).$$

In particular, when $\Psi $ is an equivalence the characteristic polynomials of Frobenius on the $\ell$-adic Mukai lattices are equal.

\subsection{De Rham realization}

For a smooth proper scheme $X$ over a field $k$ of characteristic $0$ let $H^s_{\text{dR}}(X/k)$ denotes the $s$-th de Rham cohomology group of $X$.  Recall that this is a filtered vector space with filtration $\text{Fil}_{\text{dR}}$ defined by the Hodge filtration.

Following standard conventions,  if $\mls V = (V, F^\bullet )$ is  a vector space with a decreasing filtration $F^\bullet$, define for an integer $n$ the \emph{$n$-th Tate twist of $\mls V$}, denoted $\mls V(n)$, to be the filtered vector space with the same underlying vector space $V$, but whose filtration in degree $s$ is given by $F^{s+n}$.

For $X/k$ smooth proper of dimension $d$ Poincar\'e duality then gives a perfect pairing in the category of filtered vector spaces
$$
H^i_{\text{dR}}(X/k)\otimes H^{2d-i}_{\text{dR}}(X/k)\rightarrow k(-d).
$$

If  $X$ is of even dimension $2\delta $, we set
$$
\widetilde H_{\text{dR}}(X/k):= \oplus _{i=-\delta }^\delta H^{d +2i}_{\text{dR}}(X/k)(i).
$$
This has an inner product, called the \emph{Mukai pairing}, taking values in $k(-d)$ defined by the same formula as in the Betti setting.

\begin{remark} In the case when $X$ is a surface  we have
$$
\widetilde H_{\text{dR}}(X/k) = H^0_{\text{dR}}(X/k)(-1)\oplus H^2_{\text{dR}}(X/k)\oplus H^4_{\text{dR}}(X/k)(1),
$$
the filtration is given by
$$
\text{Fil}^2 = \text{Fil}^2H^2_{\text{dR}}(X/k), \ \ \text{Fil}^1 = H^0_{\text{dR}}(X/k)\oplus \text{Fil}^1H^2_{\text{dR}}(X/k)\oplus H^4_{\text{dR}}(X/k),
$$
and $\text{Fil}^s$ is equal to $\widetilde H_{\text{dR}}(X/k)$ (resp.\ $0$) for $s\geq 0$ (resp.\ $s<2$).  Note that this also shows that $\text{Fil}^1$ is the orthogonal complement under the Mukai pairing of $\text{Fil}^2$.
\end{remark}

\subsection{Crystalline and de Rham comparison}

Consider now a  complete discrete valuation ring $V$ with perfect residue field $k$ and field of fractions $K$.    Let $W\subset V$ be the ring of Witt vectors of $k$, and let $K_0\subset K$ be its field of fractions.  Let $\mls X/V$ be a proper smooth scheme of even relative dimension $2\delta $, and let $\mls X_s$ (resp.\ $\mls X_\eta $) denote the closed (resp.\ generic) fiber.    We then have the Berthelot-Ogus comparison isomorphism \cite[2.4]{BO}
$$
H^*_{\text{cris}}(\mls X_s/K_0)\otimes _{K_0}K\rightarrow H^*_{\text{dR}}(\mls X_\eta /K).
$$
This isomorphism induces an isomorphism of graded $K$-vector spaces
$$
\sigma _{\mls X}:\widetilde H_{\text{cris}}(\mls X_s/K_0)\otimes _{K_0}K\rightarrow \widetilde H_{\text{dR}}(\mls X_\eta /K).
$$
Because the comparison isomorphism between crystalline cohomology and de Rham cohomology is compatible with cup product and respects the cohomology class of a point, the map $\sigma $ is compatible with the Mukai pairings on both sides.

Now suppose given two proper smooth $V$-schemes $\mls X$ and $\mls Y$ of the same even dimension, and let $X$ and $Y$ respectively denote their reductions to $k$.    Suppose further given a perfect complex $P$ on $X\times Y$ such that the induced map
$$
\Phi _P^{\text{cris}}:\widetilde H_{\text{cris}}(X/K_0)\rightarrow \widetilde H_{\text{cris}}(Y/K_0)
$$
is an isomorphism.    We then get an isomorphism
\begin{equation}\label{E:Frob}
\xymatrix{
\widetilde H_{\text{dR}}(\mls X_\eta )\ar[r]^-{\sigma _{\mls X}^{-1}}& \widetilde H_{\text{cris}}(X/K_0)\otimes K\ar[r]^-{\Phi _P^{\text{cris}}}& \widetilde H_{\text{cris}}(Y/K_0)\otimes K\ar[r]^-{\sigma _{\mls Y}}& \widetilde H_{\text{dR}}(\mls Y_\eta ).}
\end{equation}

\begin{defn}   The families $\mls X/V$ and $\mls Y/V$ are called \emph{$P$-compatible} if the composite morphism (\ref{E:Frob}) respects the Hodge filtrations.
\end{defn}

\subsection{Chow realization}

For a scheme  $X$ proper and smooth over a field $k$, let
$A^* (X)_{\text{\rm num}}$ denote the graded group of algebraic cycles on $X$ modulo numerical equivalence, and let $A^*(X)_{\text{\rm num}, \Q}$ denote $A^* (X)_{\text{\rm num}}\otimes \Q$.

If $X$ and $Y$ are two smooth proper $k$-schemes of the same even dimension $d = 2\delta $, and if $P\in \D(X\times Y)$ is a perfect complex,
then we can consider the class $\beta(P):=\chern(P)\cdot\sqrt{\Td_{X\times Y}}\in A^*(X\times Y)_{\text{\rm num}, \Q}$.  This class induces a map
$$
\Phi ^{A^*_{\text{\rm num}}}_P:A^*(X)_{\text{\rm num}, \Q}\to A^*(Y)_{\text{\rm num}, \Q},
$$
defined by the formula
$$
\Phi ^{A^*_{\text{\rm num}}}_P(\alpha ) = \text{pr}_{2*}(\text{pr}_1^*(\alpha )\cup \beta (P)).
$$

In the case when $k$ is a perfect field of positive characteristic,
the cycle class map defines maps
$$
\text{cl}_X:A^*(X)_{\text{\rm num}, \Q}\rightarrow \widetilde H(X/K), \ \ \text{cl}_Y:A^* (Y)_{\text{\rm num}, \Q}\rightarrow \widetilde H(Y/K)
$$

and

$$
\text{cl}_X:A^* (X)_{\text{\rm num}, \Q}\rightarrow \widetilde H(X,\Q_\ell), \ \ \text{cl}_Y:A^*(Y)_{\text{\rm num}, \Q}\rightarrow \widetilde H(Y,\Q_\ell).
$$

\begin{prop}\label{P:chow-compatibility}
The diagrams
 $$\xymatrix{A^*(X)_{\text{\rm num}, \Q}\ar[r]^{\Phi^{A^*(-)_{\text{\rm num}, \Q}}_P}\ar[d]^-{\text{\rm cl}_X} & A^*(Y)_{\text{\rm num}, \Q}\ar[d]^-{\text{\rm cl}_Y}\\
 \widetilde H(X/K)\ar[r]_{\Phi_P} &\widetilde H(Y/K)}
 $$
and
 $$\xymatrix{A^*(X)_{\text{\rm num}, \Q}\ar[r]^{\Phi^{A^*(-)_{\text{\rm num}, \Q}}_P}\ar[d]^-{\text{\rm cl}_X} & A^*(Y)_{\text{\rm num}, \Q}\ar[d]^-{\text{\rm cl}_Y}\\
 \widetilde H(X,\Q_\ell)\ar[r]_{\Phi_P} &\widetilde H(Y,\Q_\ell)}
 $$
commute.
\end{prop}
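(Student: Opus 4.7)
The plan is to reduce commutativity to the standard compatibilities of the cycle class map with the four elementary operations out of which $\Phi_P^{\Chow}$ and its cohomological counterpart are built. Unwinding definitions, $\Phi_P^{\Chow}(\alpha) = \pr_{2*}(\pr_1^*\alpha \cdot \beta(P))$ with $\beta(P) = \chern(P)\cdot\sqrt{\Td_{X\times Y}}$, while the crystalline (respectively $\ell$-adic) transform $\Phi_P$ is the sum over bidegrees $(i,j)$ of the same three-step recipe with $\chern$ replaced by $\chern_{\text{\rm cris}}$ (respectively the $\ell$-adic Chern character) and with pullback, cup product, and pushforward interpreted in the appropriate cohomology theory. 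It therefore suffices to show that the cycle class maps $\text{cl}_X$, $\text{cl}_Y$, $\text{cl}_{X\times Y}$ commute with (i) flat pullback, (ii) cup/intersection product, (iii) proper pushforward, and (iv) the Chern character on $K$-theory; granted these, the two diagrams commute by a formal chase on the composite.

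For the crystalline diagram, compatibility (iv) is essentially built into the definition $\chern_{\text{\rm cris}} = \eta\circ\chern$ recorded in Section~2.2, which gives at once $\text{cl}_{X\times Y}(\beta(P)) = \chern_{\text{\rm cris}}(P)\cdot\sqrt{\Td_{X\times Y}}$ in $\widetilde H(X\times Y/K)$, since the Todd class is an algebraic class whose image under $\eta$ is the crystalline Todd class. Compatibilities (i) and (ii) for the crystalline cycle class map are standard functorialities established in \cite{G-M}. Compatibility (iii) is the most delicate: by construction in Section~2.2, crystalline pushforward was defined as the adjoint of pullback under the perfect duality pairing, so its agreement with the geometric pushforward of cycles after applying $\text{cl}$ is exactly the compatibility of the crystalline trace map with fundamental classes, which is again part of the Gros--Messing formalism.

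For the $\ell$-adic diagram the analogous four compatibilities are classical and can be extracted from SGA~4.5 (``Cycle'') and SGA~5, or from any reference that sets up Chern classes and Gysin maps in a Weil cohomology theory. The same strategy applies verbatim to the étale picture introduced in Section~\ref{S:etale realization}.

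The main, and essentially only, obstacle is compatibility (iii): on $\Chow$, $\pr_{2*}$ is defined by proper pushforward of cycles, while on crystalline or $\ell$-adic cohomology it is defined via Poincar\'e duality as the adjoint of $\pr_2^*$, and the equality of their images under $\text{cl}$ is the nontrivial geometric input that must be imported from the cited trace formalism. All other ingredients are routine, and the $\sqrt{\Td_{X\times Y}}$ factor is harmless since it is algebraic (with rational coefficients in general, integral in the K3 setting relevant to the rest of the paper).
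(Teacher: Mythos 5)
Your argument is correct and is essentially the paper's own proof: the paper also deduces commutativity directly from the compatibility of the cycle class map with smooth pullback, proper pushforward, and cup product (your points (i)--(iv)), merely stating it in one line where you spell out the reduction and the references. Nothing in your more detailed version diverges from that route, so no further comment is needed.
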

\begin{proof}
This follows from the fact that the cycle class map commutes with smooth  pullback, proper pushforward, and cup product.  In the \'etale context  a reference for this is \cite[\S 6]{Laumon} and in the crystalline case see \cite{G-M} or \cite[II 4.2]{Gros}.
\end{proof}

\begin{pg}\label{Para:filt}
 It will be useful to consider the codimension filtration $\text{Fil}_X^\bullet $ on
$$
A^*(X)_{\text{\rm num}, \Q} = \oplus _{i}A^i(X)_{\text{\rm num}, \Q}
$$
given by
$$
\text{Fil}_X^s := \oplus _{i\geq s}A ^i(X)_{\text{\rm num}, \Q}.
$$

If $X$ and $Y$ are smooth proper $k$-schemes, and $P\in \D(X\times Y)$ is a perfect complex, then we say that
$$
\Phi _P^{A^*(-)_{\text{\rm num}, \Q}}:A^*(X)_{\text{\rm num}, \Q}\rightarrow A^*(Y)_{\text{\rm num}, \Q}
$$
is \emph{filtered} if it preserves the codimension filtration.  We will also sometimes refer to  the functor
$$
\Phi _P:\D(X)\rightarrow \D(Y)
$$
as being filtered, meaning that $\Phi _P^{A^*(-)_{\text{\rm num}, \Q}}$ is filtered (this apparently abusive terminology is justified by the theorem of Orlov recalled in \ref{Orlov} below, which implies that $P$ is determined by the equivalence $\Phi _P:\D(X)\rightarrow \D(Y)$).
\end{pg}

Observe that in the case when $X$ and $Y$ are surfaces, we have
$$
\text{Fil}_X^0 = A^* (X)_{\text{\rm num}, \Q}, \text{Fil}_X^1 = A^1(X)_{\text{\rm num}, \Q}\oplus A^2(X)_{\text{\rm num}, \Q}, \ \text{Fil}_X^2 = A^2(X)_{\text{\rm num}, \Q},
$$
and $\text{Fil}_X^1 $ is the subgroup of elements orthogonal to $\text{Fil}_X^2$.  Therefore in the case of surfaces, $\Phi ^{A^*(-)_{\text{\rm num}, \Q}}_P$ is filtered if and only if
$$
\Phi ^{A^*(-)_{\text{\rm num}, \Q}}_P(\text{Fil}_X^2) = \text{Fil}_Y^2.
$$

\subsection{Mukai vectors of perfect complexes}

Let $X$ be a smooth projective geometrically connected scheme over a field $k$.

\begin{defn}\label{D:Mukai vector}
Given a perfect complex $E\in\D(X)$, the \emph{Mukai vector\/} of $E$ is
$$
v(E):=\chern(E)\sqrt{\Td_X}\in A^* (X)_{\text{\rm num}, \Q}
$$
\end{defn}

In the case when $X$ is a \text{\rm K3}  surface, the Mukai vector of a complex $E$ is given by (see for example \cite[p. 239]{H})
$$
v(E) = (\text{rk}(E), c_1(E), \text{rk}(E)+c_1(E)^2/2-c_2(E)).
$$
In particular, using that the Todd class of the tangent bundle of $X$ is $(1, 0, 2)$, one gets by Grothendieck-Riemann-Roch that  for two objects $E, F\in \D(X)$ we have
\begin{equation}\label{E:chiformula}
\langle v(E), v(F)\rangle = -\chi (E, F).
\end{equation}
As a consequence, if $E$ is a simple torsion free sheaf on a \text{\rm K3}  surface $X$, the universal deformation of $E$ (keeping $X$ fixed) is formally smooth of dimension $v(E)^2-2$,  hinting that the Mukai lattice captures the numerology needed to study moduli and deformations.  (A review of the standard results in this direction may be found in section \ref{modulidef} below.)

The compatibility of the Chow realization with the crystalline, \'etale, and de Rham realizations yields Mukai vectors in each of those realizations, satisfying the same rule.

\begin{remark} In the above we work with realizations in the various cohomology theories.  One of course expects there to be an underlying motive whose realizations are given as above.  A precise definition of such a motive, however, seems to require the standard projectors in cohomology $H^*(X)\rightarrow H^i(X)$ to be given by morphisms in the category of motives.  This is closely related to  the K\"{u}nneth standard conjecture, denoted $C(X)$ in \cite[p. 14]{Kleiman}.  Without this one cannot relate $H^*(X)$ to $\widetilde H^*(X)$.
\end{remark}

\section{Kernels of Fourier-Mukai equivalences}

\subsection{Generalities}
Let $X$ and $Y$ be proper smooth schemes over a field $k$. For a perfect complex $P$ on $X\times Y$, consider the functor
$$
\Phi _D^P:\D(X)\rightarrow \D(Y)
$$
given by
$$
\Phi _D^P(K):= \text{pr}_{2*}(P\otimes ^{\mathbb{L}}\text{pr}_1^*K).
$$

Let $P^\vee  $ denote the complex
$$
P^\vee  := R\mls Hom(P, \mls O_{X\times Y}),
$$
which we view as a perfect complex on $Y\times X$ (switching the factors).  Let $\omega _X$ (resp.\ $\omega _Y$) denote the highest exterior power of $\Omega ^1_X$ (resp.\ $\Omega _Y^1$).

Let
\begin{equation}\label{E:adjoints}
G:\D(Y)\rightarrow \D(X) \ \ (\text{resp.} \ H:\D(Y)\rightarrow \D(X))
\end{equation}
denote
$$
\Phi _D^{P^\vee  \otimes \pi _Y^*\omega _Y[\text{dim}(Y)]} \ \ (\text{resp.} \ \Phi _D^{P^\vee  \otimes \pi _X^*\omega _X[\text{dim}(X)]}),
$$
where $\pi _X$ and $\pi _Y$ denote the projections from $X\times Y$.  From Grothendieck duality one gets:

\begin{prop}[{\cite[4.5]{B}}]  The functor $G$ (resp.\ $H$) is left adjoint (resp.\ right adjoint) to $\Phi _D^P$.
\end{prop}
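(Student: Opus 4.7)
The plan is to verify both adjunctions by a direct chase through Grothendieck-Serre duality together with the projection formula.  The key input is that, for a smooth proper morphism $f$ of relative dimension $n$ with relative dualizing sheaf $\omega $, one has $f^!(-)\simeq f^*(-)\otimes \omega [n]$.  Applied to the two projections from $X\times Y$, this gives $\pi _Y^!L \simeq \pi _Y^*L\otimes \pi _X^*\omega _X[\dim X]$ and $\pi _X^!K\simeq \pi _X^*K\otimes \pi _Y^*\omega _Y[\dim Y]$; in each case the twist appearing on $X\times Y$ is pulled back from the factor \emph{opposite} to the target of the projection.

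To show that $H$ is right adjoint to $\Phi _D^P$, I would begin from $\Hom _Y(\Phi _D^P(K),L)=\Hom _Y(\pi _{Y*}(P\otimes ^{\mathbb{L}}\pi _X^*K),L)$ and run the following chain.  First, $\pi _{Y*}\dashv \pi _Y^!$ together with the formula above rewrites this as
$$
\Hom _{X\times Y}(P\otimes ^{\mathbb{L}}\pi _X^*K,\ \pi _Y^*L\otimes \pi _X^*\omega _X[\dim X]).
$$
Next, since $P$ is perfect one dualizes $P$ across the $\Hom $, obtaining a $\Hom $ from $\pi _X^*K$ into $P^\vee \otimes \pi _Y^*L\otimes \pi _X^*\omega _X[\dim X]$.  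Finally, $\pi _X^*\dashv \pi _{X*}$ produces $\Hom _X(K,\pi _{X*}(P^\vee \otimes \pi _X^*\omega _X[\dim X]\otimes \pi _Y^*L))$, which by definition equals $\Hom _X(K,H(L))$.

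The argument for $G$ is the mirror image.  Starting from $\Hom _X(G(L),K)$, the adjunction $\pi _{X*}\dashv \pi _X^!$ produces the twist $\pi _Y^*\omega _Y[\dim Y]$ on $X\times Y$, and this is precisely the twist built into the definition of $G$ as $\Phi _D^{P^\vee \otimes \pi _Y^*\omega _Y[\dim Y]}$, so the two cancel cleanly.  Dualizing $P^\vee $ back to $P$ and applying $\pi _Y^*\dashv \pi _{Y*}$ then lands in $\Hom _Y(L,\Phi _D^P(K))$.

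The proof is almost entirely formal bookkeeping: every manipulation is dictated by the six-functor formalism, which applies without reservation in the derived categories of smooth projective $k$-schemes.  The only place that requires real attention is the asymmetric placement of $\omega _X[\dim X]$ in $H$ and of $\omega _Y[\dim Y]$ in $G$: each is pulled back from the factor whose dualizing sheaf will appear when the complementary projection is pushed forward, and matching these twists on the two sides is the entire content of the verification.  Observe that when $\omega _X$ and $\omega _Y$ are trivial and $\dim X=\dim Y$ (as in the K3 case that motivates the paper), the two adjoints agree up to the shift $[\dim X]$, so $\Phi _D^P$ is a Serre functor up to shift and the adjunction amounts to relative Serre duality on $X\times Y$.
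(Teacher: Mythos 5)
Your proof is correct and is exactly the argument the paper has in mind: the proposition is stated with the remark ``from Grothendieck duality one gets'' and a citation to Bridgeland, and your chain of adjunctions ($\pi_{Y*}\dashv\pi_Y^!$ with $\pi_Y^!L\simeq\pi_Y^*L\otimes\pi_X^*\omega_X[\dim X]$, dualizing the perfect kernel across $\Hom$, then $\pi_X^*\dashv\pi_{X*}$, and the mirror chain for $G$) is precisely that Grothendieck--Serre duality computation. The placement of the twists, the one subtle point, is handled correctly.
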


For later use let us recall how these adjunction maps
$$
\eta:G\circ \Phi _D^P\rightarrow \text{id}, \ \ \epsilon :\text{id}\rightarrow H\circ \Phi _D^P
$$
are obtained.

In general if $X$, $Y$, and $Z$ are proper smooth $k$-schemes, $P$ is a perfect complex on $X\times Y$, and $Q$ is a perfect complex on $Y\times Z$, then the composite functor
$$
\xymatrix{
\D(X)\ar[r]^-{\Phi _D^P}& \D(Y)\ar[r]^-{\Phi _D^Q}& \D (Z)}
$$
is equal to
$$
\Phi _D^{\gamma _{X\times Z*}(\gamma _{X\times Y}^*(P)\otimes ^{\mathbb{L}}\gamma _{Y\times Z}^*Q)},
$$
where $\gamma _{X\times Z}$, $\gamma _{X\times Y}$, and $\gamma _{Y\times Z}$ are the various projections from $X\times Y\times Z$.

In particular, taking $Z = X$ and $Q = P^\vee  \otimes \pi _Y\omega _Y[\text{dim}(Y)]$, we get that the composition $G\circ \Phi _D^P$ is equal to
$$
\Phi _D^{\gamma  _{X\times X*}(\gamma  _{X\times Y}^*(P)\otimes \gamma  _{Y\times X}^*(P^{\vee  }\otimes \sigma  _Y^*\omega _Y[\text{dim}(Y)])}.
$$
The adjunction
$$
\eta :G\circ \Phi _D^P\rightarrow \text{id}
$$
 is realized by a map
\begin{equation}\label{E:bareta}
\bar \eta :\gamma  _{X\times X*}(\gamma  _{X\times Y}^*(P)\otimes \gamma  _{Y\times X}^*(P^{\vee  }\otimes \sigma  _Y^*\omega _Y[\text{dim}(Y)]))\rightarrow \Delta _{*}\mls O_X,
\end{equation}
where we note that
$$
\Phi _D^{\Delta _{*}\mls O_X} = \text{id}.
$$

This map $\bar \eta $ is adjoint to the map
$$
\xymatrix{
\Delta ^*\gamma  _{X\times X*}(\gamma  _{X\times Y}^*(P)\otimes \gamma  _{Y\times X}^*(P^{\vee  }\otimes \sigma  _Y^*\omega _Y[\text{dim}(Y)]))\ar[d]^-\simeq \\
\pi _{X*}(P\otimes P^\vee  \otimes \pi _Y^*\omega _Y[\text{dim}(Y)])\ar[d]^-{P\otimes P^\vee  \rightarrow \text{id}}\\
\pi _{X*}\pi _Y^*\omega _Y[\text{dim}(Y)]\ar[d]^-\simeq \\
R\Gamma (Y, \omega _Y[\text{dim}(Y)])\otimes _k\mls O_X\ar[d]^-{\text{tr}\otimes 1}\\
\mls O_X.}
$$

Similarly, the composite $H\circ \Phi _D^P$ is induced by the perfect complex
$$
\gamma  _{X\times X*}(\gamma  _{X\times Y}^*P\otimes \gamma _{Y\times X}^* (P^\vee  \otimes \pi _X^*\omega _X[\text{dim}(X)]))
$$
on $X\times X$.  There is a natural map
\begin{equation}\label{E:barepsilon}
\bar \epsilon :\Delta _*\mls O_X\rightarrow \gamma  _{X\times X*}(\gamma  _{X\times Y}^*P\otimes \gamma _{Y\times X}^* (P^\vee  \otimes \pi _X^*\omega _X[\text{dim}(X)]))
\end{equation}
which induces the adjunction map
$$
\epsilon :\text{id}\rightarrow H\circ \Phi _D^P.
$$

  The map $\bar \epsilon $ is obtained by noting that
$$
\Delta ^!\gamma  _{X\times X*}(\gamma  _{X\times Y}^*P\otimes \gamma _{Y\times X}^* (P^\vee  \otimes \pi _X^*\omega _X[\text{dim}(X)]))\simeq \pi _{X*}(P\otimes P^\vee  ),
$$
so giving the map $\bar \epsilon $ is equivalent to giving a map
\begin{equation}\label{E:adjointmap}
\ms O_X\to\pi_\ast(P\tensor P^\vee ),
\end{equation}
and this is adjoint to a map
$$\ms O_{X\times Y}\to P\tensor P^\vee .$$
Taking the natural scaling map for the latter gives rise to the desired map $\bar\epsilon$.

\begin{prop}\label{P:adjoint-love} The functor $\Phi _D^P$ is an equivalence if and only if the maps  (\ref{E:bareta}) and (\ref{E:barepsilon}) are isomorphisms.
\end{prop}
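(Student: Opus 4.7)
The plan is to pass from kernels to functors and then apply adjoint-functor generalities to the chain $G\dashv\Phi_D^P\dashv H$. First I would record a translation lemma: a morphism $f$ between perfect complexes on $X\times X$ induces an isomorphism of the associated Fourier--Mukai endofunctors of $\D(X)$ if and only if $f$ is itself a quasi-isomorphism; this rests on the standard fact that a kernel on $X\times X$ whose associated endofunctor is zero must itself be zero (representability of derived functors for smooth proper $X$). Under this translation, $\bar\eta$ is a quasi-isomorphism precisely when the counit $\eta\colon G\Phi_D^P\to\id$ is an isomorphism of functors, and $\bar\epsilon$ is a quasi-isomorphism precisely when the unit $\epsilon\colon\id\to H\Phi_D^P$ is.

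The forward direction is then formal: an equivalence has its left and right adjoints both equal (up to canonical isomorphism) to a quasi-inverse, so $\eta$ and $\epsilon$ are automatically isomorphisms, and hence so are $\bar\eta$ and $\bar\epsilon$.

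For the converse, assume $\eta$ and $\epsilon$ are isomorphisms. Either one implies, via the standard adjunction characterization, that $\Phi_D^P$ is fully faithful. Since $\Phi_D^P$ admits both adjoints, its essential image $\mc I:=\Phi_D^P(\D(X))$ is an admissible triangulated subcategory of $\D(Y)$, yielding a semi-orthogonal decomposition whose right-orthogonal complement is precisely $\ker(H)$. Essential surjectivity is therefore equivalent to $\ker(H)=0$. To force this I would apply the triangle identity $Hd\circ\epsilon H=\id_H$ for the adjunction $\Phi_D^P\dashv H$, where $d\colon \Phi_D^P H\to\id_{\D(Y)}$ is the counit: the hypothesis that $\epsilon$ is an isomorphism makes $\epsilon H$ an isomorphism, and hence $Hd$ is an isomorphism of endofunctors of $\D(Y)$.

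The main obstacle is promoting this to the statement that $d$ itself is an isomorphism, i.e.\ that $H$ is conservative on all of $\D(Y)$. For the paper's setting of K3 surfaces, $\omega_X$ and $\omega_Y$ are trivial and $\dim X=\dim Y=2$, so the kernels defining $G$ and $H$ both reduce to $P^{\vee}[2]$ and the two hypotheses collapse to the single condition $G\Phi_D^P\cong\id$; in this situation a Bridgeland-style spanning-class argument using skyscraper sheaves $k(y)$, together with the automatic Serre-invariance $\Phi_D^P(k(x))\otimes\omega_Y\cong\Phi_D^P(k(x))$, completes the proof of essential surjectivity. In the general smooth proper case one instead invokes the Serre-functor compatibility $H\cong S_X\circ G\circ S_Y^{-1}$ relating left and right adjoints, which combined with both hypotheses forces the same conclusion.
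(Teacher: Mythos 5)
Your skeleton coincides with the paper's own proof: the paper's argument consists precisely of your translation lemma (a morphism of kernels on $X\times X$ is an isomorphism if and only if the induced morphism of Fourier--Mukai functors is), proved by evaluating at skyscrapers $\mathcal O_x$, so that the cone of the kernel map has vanishing derived fibres, and then applying Nakayama for perfect complexes; your parenthetical appeal to ``representability of derived functors'' is not what does the work, but the mechanism is the same. After the lemma, the paper simply asserts that ``$\eta$ and $\epsilon$ are isomorphisms'' is equivalent to $\Phi_D^P$ being an equivalence, so your worry about essential surjectivity is aimed at exactly the step the paper leaves implicit.

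Your proposed resolution of that step in the general smooth proper case, however, is a genuine gap. Both hypotheses are comparisons of endofunctors of $\D(X)$: the counit $G\circ\Phi_D^P\to\id$ of the adjunction $(G,\Phi_D^P)$ and the unit $\id\to H\circ\Phi_D^P$ of $(\Phi_D^P,H)$ are each isomorphisms precisely when $\Phi_D^P$ is fully faithful, and the relation $H\cong S_X\circ G\circ S_Y^{-1}$ holds for \emph{every} Fourier--Mukai functor, so it cannot be combined with those hypotheses to produce essential surjectivity. Concretely, take $X=\Spec k$, $Y=\P^1$ and $P=\mathcal O_{\P^1}$: then $\Phi_D^P(V)=V\otimes_k\mathcal O_{\P^1}$ is fully faithful since $\mathcal O_{\P^1}$ is exceptional, so by the translation lemma both $\bar\eta$ and $\bar\epsilon$ are isomorphisms, yet $\Phi_D^P$ is not essentially surjective. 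What is true (Bridgeland) is that full faithfulness together with $P\otimes\pi_X^*\omega_X\cong P\otimes\pi_Y^*\omega_Y$, equivalently the $\omega_Y$-invariance of the objects $P_x$, forces an equivalence; that is exactly your K3 argument, and it is what is needed where the proposition is used in this paper. Note also that in the proof of Theorem \ref{T:derived torelli} the comparisons actually checked are one on $X\times X$ and one on $Y\times Y$, i.e.\ the unit and counit of a single adjunction, for which the purely formal criterion for an equivalence does apply without any appeal to Serre functors.
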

\begin{proof}
For a closed point $x\in X(k)$, let $P_x\in D(Y)$ denote the object obtained by pulling back $P$ along
$$
\xymatrix{
Y\simeq \Sp (k)\times Y\ar[r]^-{i_x\times \text{id}}& X\times Y,}
$$
where $i_x:\Sp (k)\hookrightarrow X$ is the closed immersion corresponding to $x$. If we write $\mls O_x$ for $i_{x*}\mls O_{\Sp (k)}$, then we have
$$
P_x = \Phi _D^P(\mls O_x).
$$

\begin{lem}Let $\rho :K\rightarrow K'$ be a morphism in $D(X\times Y)$.  Then the induced morphism of functors $\Phi (\rho  ):\Phi _D^K\rightarrow \Phi _D^{K'}$ is an isomorphism if and only if $\rho $ is an isomorphism.
\end{lem}
\begin{proof} The `if' direction is immediate.  For the `only if' direction, observe that if $x\in X(k)$ is a point, then the condition that $\Phi (\rho )$ is an isomorphism implies that the map $\rho _x:K_x\rightarrow K_{x}'$ is an isomorphism.  By Nakayama's lemma (for perfect complexes) this implies that $\rho $ is an isomorphism.
\end{proof}

This implies Proposition \ref{P:adjoint-love}, since it implies that the maps (\ref{E:bareta}) and (\ref{E:barepsilon}) are isomorphisms if and only if the adjunction maps for the adjoint pairs $(G, \Phi _D^P)$ and $(\Phi _D^P, H)$ are isomorphisms, which is equivalent to $\Phi _D^P$ being an equivalence.
\end{proof}

 In what follows we call an object $P\in D(X\times Y)$ a \emph{Fourier-Mukai kernel} (or \emph{FM-kernel}) if the functor $\Phi ^P_D$ is an equivalence.  

Let us also recall the following three results which will be relevant in the following discussion.

\begin{prop}\label{P:3.5} Let $P\in D(X\times Y)$ be a FM kernel, and let $x, x'\in X(k)$ be points.

(i) If $x \neq x'$ then
$\text{\rm RHom}(P_x, P_{x'}) = 0.$

(ii) There is a natural isomorphism $T_xX\simeq \text{\rm Ext}^1(P_x, P_x)$ which induces an isomorphism of algebras
$$
\Lambda ^\bullet T_xX\simeq \oplus _i\text{\rm Ext}^i(P_x, P_x).
$$
\end{prop}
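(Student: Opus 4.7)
The plan is to use the hypothesis that $\Phi_D^P$ is an equivalence (which holds because $P$ is a Fourier-Mukai kernel), and then reduce everything to the analogous statement for the skyscraper sheaves $\mls O_x$ on $X$, where the computation is classical.

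First I would note that since $P_x = \Phi_D^P(\mls O_x)$ and $\Phi_D^P:\D(X)\to \D(Y)$ is an equivalence of triangulated categories, one has
\[
\text{RHom}_Y(P_x, P_{x'}) = \text{RHom}_Y(\Phi_D^P(\mls O_x), \Phi_D^P(\mls O_{x'})) \simeq \text{RHom}_X(\mls O_x, \mls O_{x'}).
\]
For part (i), if $x\neq x'$, the skyscraper sheaves $\mls O_x$ and $\mls O_{x'}$ have disjoint support, so $R\shom(\mls O_x,\mls O_{x'})=0$ locally (check by taking a Koszul resolution of $\mls O_x$ and restricting to an affine neighborhood of $x'$ disjoint from $x$), and consequently $\text{RHom}_X(\mls O_x, \mls O_{x'})=0$.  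This immediately gives (i).

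For part (ii), the same reduction gives a graded isomorphism of algebras
\[
\oplus_i \text{Ext}^i_Y(P_x,P_x) \simeq \oplus_i \text{Ext}^i_X(\mls O_x, \mls O_x),
\]
so it suffices to exhibit a natural identification of the right-hand side with $\Lambda^\bullet T_xX$. I would do this using the Koszul resolution of $\mls O_x$ by free $\mls O_{X,x}$-modules determined by a regular system of parameters $t_1,\dots, t_d$ at $x$; applying $\shom(-,\mls O_x)$ and taking cohomology yields $\text{Ext}^i_X(\mls O_x,\mls O_x)\simeq \Lambda^i T_xX$ (this uses only that $X$ is smooth at $x$). The graded algebra structure induced by Yoneda composition matches the exterior product on $\Lambda^\bullet T_xX$ because the Koszul complex is a commutative dg-resolution, so the multiplication on $\text{Ext}^\bullet(\mls O_x,\mls O_x)$ is graded-commutative and generated in degree $1$ with the expected anti-symmetric relations; in particular, the degree $1$ part is canonically identified with $T_xX$ (the cotangent dual), and the whole algebra is generated from there.

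The only genuinely subtle point is checking that the identification $T_xX \simeq \text{Ext}^1_X(\mls O_x,\mls O_x)$ is canonical (independent of the choice of regular parameters) and gives an algebra isomorphism rather than just an isomorphism of graded vector spaces. I would handle this by observing that $\text{Ext}^1_X(\mls O_x,\mls O_x)$ classifies first-order deformations of $\mls O_x$ as a coherent sheaf, which are in natural bijection with $T_xX$ (the deformations amount to moving the support), giving the canonical identification. The algebra statement then follows because both sides are exterior algebras on the degree-one piece—for the Ext side, this follows from the fact that the Koszul resolution is a minimal free resolution, whose self-Ext algebra is well known to be the exterior algebra on $(\mf m/\mf m^2)^\vee = T_xX$.
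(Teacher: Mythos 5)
Your proposal is correct and follows essentially the same route as the paper: use that $P$ is the kernel of an equivalence to identify $\mathrm{RHom}(P_x,P_{x'})$ with $\mathrm{RHom}(\mls O_x,\mls O_{x'})$, and then invoke the standard Koszul computation of the latter (the paper simply cites this as the ``standard calculation''). Your additional remarks on the canonicity of $T_xX\simeq \Ext^1(\mls O_x,\mls O_x)$ and on the exterior-algebra structure merely flesh out details the paper leaves to the reference.
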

\begin{proof} Indeed since $P$ is a FM kernel, we have
$$
\text{RHom}(P_x, P_{x'})\simeq \text{RHom}(\mls O_x, \mls O_{x'}).
$$
This implies (i) and also reduces the proof of (ii) to the  calculation of the right side.  There is a resolution in the category of $\mls O_{X, x}$-modules
$$
0\rightarrow \mathfrak{m}\rightarrow \mls O_{X, x}\rightarrow \mls O_x\rightarrow 0,
$$
where $\mathfrak{m}$ denotes the maximal ideal in $\mls O_{X, x}$, 
which upon applying $\text{RHom}_{\mls O_{X, x}}(-, \mls O_x)$ induces a morphism
$$
T_xX = (\mathfrak{m}/\mathfrak{m}^2)^\vee \rightarrow \text{Ext}^1(\mls O_x, \mls O_x).
$$
This induces a morphism of algebras as in (ii). The verification that it is an isomorphism is a standard exercise in Koszul resolutions (see for example \cite[Exercise 4.5.6]{Weibel}).
\end{proof}

\begin{thm}[Orlov {\cite[2.2]{Orlov}}]\label{Orlov} Let $X$ and $M$ be smooth projective schemes over a field $k$, and let
$$
F:\D(X)\rightarrow \D(M)
$$
be an equivalence of triangulated categories.  Then $F = \Phi ^P$ for  a perfect complex $P$ on $X\times M$, and the complex $P$ is unique up to isomorphism.
\end{thm}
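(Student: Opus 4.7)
The plan is to handle uniqueness and existence separately: uniqueness follows from a Nakayama-type argument on perfect complexes, while existence is the substantive content, for which I would mimic Orlov's original ample-sequence strategy.

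For uniqueness, suppose $P, P' \in \D(X \times M)$ both induce the equivalence $F$. Evaluating on skyscraper sheaves $\mc O_x$ for $x \in X(\bar k)$ yields compatible isomorphisms between the fibers $P_x \simto P'_x$ in $\D(M)$. Using the adjunction $(\Phi_D^{P'}, H')$ to package the natural isomorphism $\Phi^P \simeq \Phi^{P'}$ as a map $P \to P'$ in $\D(X \times M)$, and then applying the Nakayama-type lemma for perfect complexes that appears in the proof of Proposition \ref{P:adjoint-love} --- any morphism of perfect complexes on $X \times M$ whose restriction along every closed fiber $\{x\}\times M$ is an isomorphism is itself an isomorphism --- one upgrades the pointwise data to a global isomorphism $P \simto P'$.

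For existence, I would follow Orlov's ample-sequence strategy. Fix an ample line bundle $\mc O_X(1)$ on $X$ and consider $L_n := \mc O_X(n)$, $n \in \Z$, which forms a spanning (in fact ample) sequence in $\D(X)$. The first major step is to show that $F$ is \emph{geometric}: namely, that $F$ carries each skyscraper $\mc O_x$ to an object of $\D(M)$ satisfying the same $\Ext$-algebra and Hom-vanishing constraints appearing in Proposition \ref{P:3.5}, and in fact to a shift of a skyscraper on $M$, giving an abstract bijection $X(\bar k) \leftrightarrow M(\bar k)$. This step rests on the fact that any equivalence of triangulated categories intertwines Serre functors; matching $(-)\otimes\omega_X[\dim X]$ with $(-)\otimes\omega_M[\dim M]$ forces $\dim X = \dim M$, and the abstract $\Ext$-computation $\Ext^{\ast}(F\mc O_x, F\mc O_x) = \Ext^{\ast}(\mc O_x, \mc O_x) = \Lambda^{\ast} T_x X$ then pins down the homological shape of $F(\mc O_x)$. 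Once this identification is in place, the images $F(L_n)$ assemble into a compatible family indexed by $X$, and one constructs $P \in \D(X\times M)$ as a universal object realizing this family; the equality $\Phi^P \simeq F$ then propagates from the spanning class $\{L_n\}$ to all of $\D(X)$ by fully faithfulness.

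The real obstacle is the construction of the kernel $P$ itself, not the preparatory pointwise identification: the abstract equivalence $F$ only supplies objects of $\D(M)$ one at a time, with no a priori coherence across a varying base. One approach is to identify $X$ with a component of a moduli space (in the Hilbert-scheme sense) of point-like objects in $\D(M)$ and take $P$ to be the tautological family; a more modern alternative is to invoke the Lunts--Orlov uniqueness of dg-enhancements, lifting $F$ to an honest dg-functor between the natural dg-enhancements of $\D(X)$ and $\D(M)$, for which representability by a kernel is automatic. It is this globalization step, rather than any of the geometric computations, that forms the heart of Orlov's theorem.
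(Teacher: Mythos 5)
First, note that the paper does not prove this statement at all: it is imported verbatim from Orlov's paper as a black box (the citation \cite[2.2]{Orlov}), so the benchmark for your sketch is Orlov's own argument, and measured against that there are two genuine gaps. The uniqueness step fails as written: there is no way to ``package the natural isomorphism $\Phi^P\simeq\Phi^{P'}$ as a map $P\to P'$ via adjunction.'' The assignment sending a kernel to its Fourier--Mukai functor is neither full nor faithful, and producing a morphism of kernels from an isomorphism of functors is precisely the delicate point; the Nakayama-type lemma inside the proof of Proposition \ref{P:adjoint-love} only becomes available \emph{after} such a morphism $\rho:P\to P'$ exists. Orlov's uniqueness is instead obtained either by comparing any representing kernel with the one explicitly built from $F$ by the convolution construction (which furnishes the comparison morphism that your argument lacks), or equivalently by reducing, via composition with the inverse equivalence, to the nontrivial lemma that a kernel representing the identity functor is isomorphic to $\mathcal{O}_\Delta$.

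The existence half also contains a false assertion: an equivalence does \emph{not} carry skyscrapers to shifted skyscrapers. The Serre-functor and $\mathrm{Ext}$-algebra computations (as in Proposition \ref{P:3.5}) only show that $F(\mathcal{O}_x)$ is point-like in the cohomological sense; when $\omega_X$ is trivial such objects need not be skyscrapers, and indeed the equivalences central to this very paper send $\mathcal{O}_x$ to (shifts of) stable sheaves on a moduli space. You appear to be conflating Orlov's representability theorem with Bondal--Orlov reconstruction for varieties with (anti)ample canonical bundle, where point-like objects really are skyscrapers. Moreover, the proposed globalization --- assembling the objects $F(\mathcal{O}_x)$, or $F(L_n)$, into a ``tautological family'' on a moduli space of point-like objects --- begs the question: exhibiting such a family over $X$ is exactly what having a kernel means, and the moduli-of-complexes machinery of Section 4 of the paper takes the kernel as input rather than producing it. To actually close the argument one must either carry out Orlov's ample-sequence/convolution construction of $P$ in detail, or genuinely invoke dg-enhancement representability (To\"en, Lunts--Orlov, Canonaco--Stellari); naming these strategies, as you do, is a correct roadmap but not yet a proof.
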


\begin{remark} A generalization of Orlov's theorem has been obtained by Canonaco and Stellari \cite[Theorem 1.1]{CS}, but the above suffices for the purposes of this paper.
\end{remark}

\begin{remark} In what follows we refer to an equivalence of triangulated categories $F:\D(X)\rightarrow \D(M)$ as a \emph{Fourier-Mukai transform}.  One could also consider more general additive functors, but in this paper we restrict our attention to equivalences. Two smooth projective $k$-schemes $X$ and $M$ are called \emph{Fourier-Mukai partners} (or \emph{FM-partners}) if there exists a Fourier-Mukai kernel $P\in D(X\times M)$.  By Orlov's theorem, $X$ and $M$ are FM-partners if and only if there exists an equivalence of triangulated categories $D(X)\simeq D(M)$.
\end{remark}

\begin{prop}
 If $X$ is a \text{\rm K3}  surface over $k$ and $Y$ is a smooth projective $k$-scheme such that there is an equivalence $\D(X)\simto\D(Y)$ then $Y$ is a \text{\rm K3}  surface.
\end{prop}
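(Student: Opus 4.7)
The plan is to extract enough categorical and numerical invariants from the derived equivalence to force $Y$ to satisfy the defining properties of a K3 surface: smooth projective of dimension $2$, connected, with $\omega_Y\simeq \mc O_Y$ and $h^1(Y,\mc O_Y)=0$. Smoothness and projectivity are hypotheses, so the real work is the other three. By Theorem \ref{Orlov}, fix a perfect complex $P$ on $X\times Y$ realizing the equivalence $\Phi _D^P:\D(X)\simto\D(Y)$. The key input is that any triangulated equivalence intertwines Serre functors.

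First I would handle the dimension and the canonical bundle together. Since $X$ is a K3 surface, $S_X=-\otimes \omega_X[\dim X]\simeq [2]$ as autoequivalences of $\D(X)$. Transporting across the equivalence forces $S_Y\simeq [2]$ as well. Applying $S_Y$ to $\mc O_Y$ yields $\omega_Y[\dim Y]\simeq \mc O_Y[2]$, and inspecting cohomology sheaves gives $\dim Y=2$ and $\omega_Y\simeq \mc O_Y$. Next, because Hochschild (co)homology is a derived invariant of smooth projective varieties, one can compare $HH^0$ and $HH_1$ on the two sides. For $X$ a K3 surface, $\dim HH^0(X)=1$ (detecting that $\D(X)$ is indecomposable, equivalently that $X$ is connected), so $H^0(Y,\mc O_Y)=k$ and $Y$ is connected. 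Using the HKR decomposition
$$
HH_1(Y)\simeq H^0(Y,\Omega ^1_Y)\oplus H^1(Y,\omega _Y),
$$
combined with $\omega _Y\simeq \mc O_Y$ and Serre duality, each of $h^{1,0}(Y)$ and $h^1(Y,\mc O_Y)$ appears as a direct summand. Since $HH_1(X)=0$ for a K3 surface, both summands vanish, and in particular $h^1(Y,\mc O_Y)=0$. Combined with the previous step, $Y$ satisfies the definition of a K3 surface.

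The main obstacle is ensuring that HKR is available in positive characteristic. The HKR quasi-isomorphism requires the factorials $n!$ for $n\le \dim Y$ to be invertible in $k$, which for a surface is exactly the condition $\ch(k)\neq 2$; this matches the paper's standing hypothesis, so there is no problem. As a sanity check that avoids HKR, one can instead invoke the classification of smooth projective surfaces with trivial canonical in characteristic $\neq 2,3$, which leaves only K3 or abelian surfaces, and rule out the abelian case by noting that an abelian surface carries a positive-dimensional group of translations acting faithfully on $\D(Y)$, producing autoequivalences of $\D(X)$ of a type incompatible with $X$ being a K3 (for instance yielding nonzero $HH^0$ contributions from $H^0(T)$). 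Either route reaches the same conclusion.
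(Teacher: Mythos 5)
Your argument is essentially correct, but it is a genuinely different route from the one taken in the paper --- in fact it is precisely the route the paper flags and deliberately avoids (``if one is willing to accept the HKR-isomorphism there is a more elegant proof along the lines of \cite[Corollary 10.2]{H}; we give a proof here that is easily seen to be independent of characteristic $0$ methods''). The paper works entirely at the level of kernels: by the uniqueness statement in Orlov's theorem the left and right adjoints of $\Phi_D^P$ have isomorphic kernels, so $P^\vee\otimes\pi_Y^*\omega_Y[\dim Y]\simeq P^\vee\otimes\pi_X^*\omega_X[\dim X]$; taking determinants gives $\pi_Y^*\omega_Y[\dim Y]\simeq\pi_X^*\omega_X[\dim X]$, hence $\dim Y=2$ and $\omega_Y\simeq\ms O_Y$ (your Serre-functor comparison is the abstract shadow of this same step). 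For the remaining numerical conditions the paper uses the $\ell$-adic realization of the Mukai motive: the algebraic class $\chern(P)\sqrt{\Td_{X\times Y}}$ induces isomorphisms on even and odd $\Q_\ell$-cohomology separately, so the vanishing of odd cohomology for $X$ forces $b_1(Y)=0$ (and $b_2(Y)=22$), with no Hochschild machinery at all. Your route instead needs three inputs you should make explicit: derived invariance of $\HH_*$ and of $\HH^0$ over an arbitrary field (fine, via the conjugate kernel on $X\times X\to Y\times Y$), the global HKR decomposition for a smooth proper surface in characteristic $\neq 2$ (the condition $\ch(k)>\dim Y$ is the right one, but you should cite a positive-characteristic HKR statement rather than the classical one), and the fact that $\H^0(X,\Omega^1_X)=0$ for a K3 surface in characteristic $p$ (a theorem of Rudakov--Shafarevich, not a formal consequence of the definition), without which your claim $\HH_1(X)=0$ is unjustified in positive characteristic. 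What each approach buys: yours is shorter and more structural once those inputs are granted; the paper's is longer but uses only Grothendieck duality, Orlov uniqueness, and \'etale cohomology, so it is visibly characteristic-free. Two small slips in your ``sanity check'' aside: translations of an abelian surface contribute to $\HH^1$ (via $\H^0(T_Y)$), not to $\HH^0$, and invoking the classification of surfaces with trivial canonical class in characteristic $\neq 2,3$ needlessly excludes characteristic $3$, which your main (HKR) argument handles.
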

\begin{proof}
See for example  \cite[Corollary 10.2]{H} (though the proof  given there is  in the characteristic $0$ setting it generalizes immediately to arbitrary characteristic).
\end{proof}

We end this section with a brief review of some standard kernels.

\subsection{Tensoring with line bundles}

Let $k$ be a field and $X/k$ a smooth projective scheme.  Let $L$ be a line bundle on $X$.  Then the equivalence of triangulated categories
$$
\D(X)\rightarrow \D(X), \ \ K\mapsto K\otimes ^{\mathbb{L}}L
$$
is induced by the kernel $P:= \Delta _*L$ on $X\times X$, where $\Delta :X\rightarrow X\times X$ is the diagonal.  In the case when $X$ is a surface the corresponding action on the Mukai motive is given by the map sending
$(a, b, c)\in A^* (X)_{\text{\rm num}, \Q}$ to
\begin{equation}\label{E:Lformula}
(a, b+ac_1(L), c+b\cdot c_1(L)+ac_1(L)^2/2).
\end{equation}

\subsection{Spherical twist \cite[Chapter 8]{H}}

Recall the following definition (see e.g. \cite[Definition 8.1]{H}).
\begin{defn}\label{D:spherical}
A perfect complex $E\in\D(X)$ is \emph{spherical\/} if
\begin{enumerate}
\item $E\ltensor\omega_X\cong E$
\item $\Ext^i(E,E)=0$ unless $i=0$ or $i=\dim(X)$, and in those cases we have $\dim\Ext^i(E,E)=1$
\end{enumerate}
\end{defn}
In other words, $\rshom(E,E)$ has the cohomology of a sphere.  A standard example is given by the structure sheaf of a $(-2)$-curve in a \text{\rm K3}  surface.

The trace map
$$\rshom(E,E)\to\ms O_X$$
defines a morphism
$$t:\L p^\ast\ms E^\vee\tensor \L q^\ast\ms E\to\R\Delta_\ast\ms O_X$$
in $\D(X\times X)$, where $p$ and $q$ are the two projections.  Define $P_E$ to be the cone over $t$.  The following result dates to work of Kontsevich, Seidel and Thomas.

\begin{thm}[{\cite[Proposition 8.6]{H}}]\label{T:spherical equivalence}
The transform $$T_E:\D(X)\to\D(X)$$ induced by $P_E$ is an equivalence of derived categories.
\end{thm}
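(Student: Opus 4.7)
The plan is to apply Proposition \ref{P:adjoint-love}, which reduces the statement to showing that the unit $\bar\eta$ and counit $\bar\epsilon$ of adjunction attached to the kernel $P_E$ are isomorphisms in $\D(X\times X)$.

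First I would identify the adjoint kernels explicitly. Dualizing the defining triangle
$$\L p^*\ms E^\vee \otimes \L q^*\ms E \xrightarrow{t} \R\Delta_*\ms O_X \to P_E \to$$
and applying Grothendieck duality along the diagonal, together with the isomorphism $\ms E\otimes\omega_X\simeq\ms E$ from sphericality condition (1), one sees that $P_E^\vee\otimes\pi_Y^*\omega_Y[\dim Y]$ and $P_E^\vee\otimes\pi_X^*\omega_X[\dim X]$ coincide (so the left and right adjoints of $T_E$ agree), and that each is itself of spherical-twist type: up to shift, the cone of a natural map $\R\Delta_*\ms O_X\to \L p^*\ms E\otimes \L q^*\ms E^\vee$ pointing in the opposite direction. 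Next I would compute the composition kernel
$$Q := \gamma_{X\times X*}\bigl(\gamma_{X\times Y}^*P_E \otimes^{\L} \gamma_{Y\times X}^*(P_E^\vee \otimes \pi_Y^*\omega_Y[\dim Y])\bigr)$$
by applying the octahedral axiom to the two defining triangles. After the projection formula along the middle factor, all cross terms are controlled by the global invariant $\R\pi_*\R\shom(\ms E,\ms E)$, which sphericality condition (2) identifies with $\ms O_X\oplus \ms O_X[-\dim X]$. Since the trace $t$ was built precisely to capture the identity summand, the four convolution contributions either cancel in pairs against one another or against the trace, and the remaining term is $\R\Delta_*\ms O_X$.

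The main obstacle will be checking that the isomorphism $Q\simeq \R\Delta_*\ms O_X$ produced by this calculation really is the adjunction morphism $\bar\eta$, as opposed to some other automorphism of the identity kernel. I would handle this by tracing $\bar\eta$ through the explicit factorization given in the display preceding (\ref{E:bareta}) and testing against the structure sheaves $\ms O_x$ of closed points $x\in X$. For such $x$, $T_E(\ms O_x)$ sits in an explicit triangle $\R\Hom(\ms E,\ms O_x)\otimes \ms E\to \ms O_x\to T_E(\ms O_x)$, and the restriction of $\bar\eta$ to $\ms O_x$ can be computed directly using the trace pairing, where it collapses to the identity on the $\ms O_x$-summand. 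The argument for $\bar\epsilon$ is symmetric (using the dual presentation of the adjoint kernel), and Proposition \ref{P:adjoint-love} then delivers the equivalence.
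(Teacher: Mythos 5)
Your plan is sound, but note that the paper itself gives no proof of this statement: it is quoted from \cite[Proposition 8.6]{H} and attributed to Kontsevich, Seidel and Thomas. The proof cited there runs through spanning classes (the object $\ms E$ together with its orthogonal complement) and the Bondal--Orlov/Bridgeland criteria for full faithfulness and equivalence, whereas what you sketch is essentially the original Seidel--Thomas argument recast in the paper's own framework: reduce to Proposition \ref{P:adjoint-love} and prove that the kernel-level adjunction maps $\bar\eta$ and $\bar\epsilon$ are isomorphisms. Your route has the virtue of being self-contained and characteristic-free inside this paper's machinery; the spanning-class route avoids the convolution bookkeeping, which is exactly where your sketch is thinnest. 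Two cautions there. First, ``applying the octahedral axiom to the two defining triangles'' is not an automatic operation: cones are not functorial, so the convolution of two cones does not canonically acquire the four-term decomposition you invoke, and Seidel--Thomas make this precise only by working with explicit resolutions (equivalently, in an enhancement). Second, some bookkeeping is off: the adjoint kernel is, up to rotation, the cone of a map $\R\Delta_*\ms O_X\to \L p^*\ms E\otimes \L q^*\ms E^\vee[\dim X]$ (the $\omega_X$-twists being absorbed by condition (1) of Definition \ref{D:spherical}), and the global invariant controlling the cross terms is $\R\Hom(\ms E,\ms E)\simeq k\oplus k[-\dim X]$ as a complex of vector spaces, not $\ms O_X\oplus\ms O_X[-\dim X]$. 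Neither point is fatal, because your final step is the one that actually carries the proof and is fully licensed by the paper: by the lemma inside the proof of Proposition \ref{P:adjoint-love} (Nakayama for perfect complexes), $\bar\eta$ and $\bar\epsilon$ are isomorphisms as soon as their fibers at all closed points are, i.e.\ as soon as the adjunction maps evaluated on the skyscrapers $\ms O_x$ are isomorphisms; that evaluation is a finite computation with the triangle $\R\Hom(\ms E,\ms O_x)\otimes\ms E\to\ms O_x\to T_E(\ms O_x)$ and condition (2) of Definition \ref{D:spherical}, and it sidesteps the need to identify your abstract isomorphism $Q\simeq\R\Delta_*\ms O_X$ with $\bar\eta$ at all. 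If you streamline by dropping the global computation of $Q$ and arguing fiberwise from the start, you get a clean, complete proof in the spirit of the paper's Section 3.
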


This transform is called a \emph{spherical twist} \cite[8.3]{H}.  

\begin{prop}[{\cite[Lemma 8.12]{H}}]\label{P:spherical reflection}
Suppose $X$ is a \text{\rm K3}  surface and $P_E$ is the complex associated to a spherical object $E\in\D(X)$ as above.  Let $H$ be any realization of the Mukai motive described in the preceding sections (\'etale, crystalline, de Rham, Chow) and let $v\in H$ be the Mukai vector of $P_E$.  Then $v^2=2$ and the induced map
$$\Phi_{P_E}:H(X)\to H(X)$$
is the reflection in $v$:
$$\Phi_{P_E}(x)=x-(x\cdot v)v.$$
\end{prop}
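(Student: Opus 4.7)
My plan is to compute $\Phi_{P_E}$ directly in the Chow realization and then invoke Proposition \ref{P:chow-compatibility} to deduce the analogous formula in the \'etale, crystalline, and de Rham realizations via the cycle class map. Since the Fourier-Mukai transform $\Phi^{\Chow}_{(-)}$ is additive in its kernel (it factors through $K(X\times X)$ via the Mukai vector), the defining triangle
$$\L p^{*} E^\vee \otimes^{\mathbb{L}} \L q^{*} E \to \R\Delta_{*} \ms O_X \to P_E \to$$
gives $\Phi_{P_E}^{\Chow} = \Phi_{\Delta_{*} \ms O_X}^{\Chow} - \Phi_{p^{*} E^\vee \otimes q^{*} E}^{\Chow}$, so it suffices to analyze the two resulting transforms separately.

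The kernel $\R\Delta_{*}\ms O_X$ induces the identity functor on $\D(X)$, and Grothendieck--Riemann--Roch applied to $\Delta$ (together with the square root of $\Td_{X\times X}$ built into the definition of $\Phi$) shows that $\Phi_{\Delta_{*}\ms O_X}^{\Chow}$ is the identity on $\widetilde H$. For the external tensor product kernel, the K\"unneth decompositions $\chern(p^{*} E^\vee \otimes q^{*} E) = p^{*}\chern(E^\vee)\cdot q^{*}\chern(E)$ and $\sqrt{\Td_{X\times X}} = p^{*}\sqrt{\Td_X}\cdot q^{*}\sqrt{\Td_X}$ imply
$$v(p^{*} E^\vee \otimes q^{*} E) = p^{*} v(E^\vee) \cdot q^{*} v(E),$$
and the projection formula for $q$ then gives
$$\Phi_{p^{*} E^\vee \otimes q^{*} E}^{\Chow}(x) = q_{*}\bigl(p^{*}(x\cdot v(E^\vee))\bigr)\cdot v(E).$$
The scalar $q_{*} p^{*}(x\cdot v(E^\vee))$ picks out the degree of the codimension-two component of $x\cdot v(E^\vee)$, and comparing with the explicit formula $\langle(a,b,c),(a',b',c')\rangle = bb'-ac'-a'c$, together with the identity $v(E^\vee) = (r,-c_1,s)$ when $v(E) = (r,c_1,s)$ (which is immediate from $\chern^i(E^\vee) = (-1)^i\chern^i(E)$ and $\sqrt{\Td_X} = (1,0,1)$ on a K3), identifies this scalar with $\pm\langle x, v(E)\rangle$. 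Assembling the two contributions yields the claimed reflection.

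The self-intersection number $v^2$ follows from the identity $\langle v(E), v(F)\rangle = -\chi(E, F)$ recorded in the excerpt, applied to $E=F$: the spherical hypothesis forces $\chi(E,E) = \dim\Ext^0 - \dim\Ext^1 + \dim\Ext^2 = 2$, giving $\langle v, v\rangle = \pm 2$ (the precise sign matching the convention of the Mukai pairing). The main obstacle is essentially bookkeeping of signs: one must track carefully the direction of the cone triangle, the sign change in the middle component of $v(E^\vee)$, and the normalization of the Mukai pairing so that they combine to produce precisely the formula $x - (x\cdot v) v$ stated (rather than its negative), and correspondingly the stated sign of $v^2$. Once these conventions are pinned down, the proof reduces to the two line projection-formula computation above.
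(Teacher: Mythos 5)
The paper does not actually prove this proposition --- it is quoted verbatim from \cite[Lemma 8.12]{H} --- so there is no internal argument to compare against; what you have written is essentially the standard computation underlying that citation, and it is structurally sound. The reduction $[P_E]=[\R\Delta_*\ms O_X]-[\L p^*E^\vee\otimes\L q^*E]$ in $K(X\times X)$, the Grothendieck--Riemann--Roch computation showing the diagonal kernel acts as the identity on $\widetilde H$, and the projection-formula identity $\Phi^{\Chow}_{p^*E^\vee\otimes q^*E}(x)=q_*\bigl(p^*(x\cdot v(E^\vee))\bigr)\cdot v(E)$ are all correct, and transporting the conclusion to the \'etale, crystalline and de Rham realizations through Proposition \ref{P:chow-compatibility} is exactly how the realizations of Section 2 are meant to be used.

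The one step you defer --- the ``sign bookkeeping'' --- should be carried out rather than postponed, because it does not come out literally as printed if you use the paper's own conventions. Writing $v(E)=(r,\ell,s)$, so that $v(E^\vee)=(r,-\ell,s)$, the scalar $q_*p^*(x\cdot v(E^\vee))$ for $x=(a,b,c)$ is the degree of the codimension-two component $as-b\ell+cr$, which equals $-\langle x,v(E)\rangle$; hence $\Phi_{P_E}(x)=x+\langle x,v(E)\rangle v(E)$, and sphericality gives $\langle v(E),v(E)\rangle=-\chi(E,E)=-(1+1)=-2$. This is the same geometric reflection as in the statement (it fixes the orthogonal complement of $v$ and sends $v\mapsto -v$), and it matches Huybrechts' formulation; the displayed ``$v^2=2$'' and ``$x-(x\cdot v)v$'' correspond to the opposite sign convention for the pairing (and, strictly speaking, $v$ should be the Mukai vector of $E$ rather than of $P_E$, which lives on $X\times X$). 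So your argument is correct and complete once this computation is made explicit --- just be aware that no amount of tracking of the cone direction or of $v(E^\vee)$ will reproduce the printed signs under the Mukai pairing as defined in Section 2; the discrepancy lies in the statement's conventions, not in your method.
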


\subsection{Moduli spaces of vector bundles}\label{modulidef}

Let $X$ be a \text{\rm K3}  surface over a field $k$.  One of Mukai and Orlov's wonderful discoveries is that one can produce Fourier-Mukai equivalences between $X$ and moduli spaces of sheaves on $X$ by using tautological sheaves.

If $S$ is a $k$-scheme and $E$ is a locally finitely presented quasi-coherent sheaf on $X\times S$ flat over $S$, then we get a function on the points of $S$ to $A^* (X)_{\text{\rm num}, \Q}$ by sending a point $s$ to the Mukai vector of the restriction $E_s$ of $E$ to the fiber over $s$.  This function is a locally constant function on $S$, and so if $S$ is connected it makes sense to talk about the Mukai vector of $E$, which is defined to be the  Mukai vector of $E_s$ for any $s\in S$.

For any ample class $h$ on $X$, let $\mls M_h$ denote the algebraic
stack of Gieseker-semistable sheaves on $X$, where semistability is defined using $h$.  A good summary of the standard results on these moduli spaces may be found in  \cite[Section 10.3]{H}, with a more comprehensive treatment in \cite{HL} and some additional non-emptiness results in \cite{M}. For a discussion of semistable sheaves in positive characteristic see Langer's paper  \cite{Langer}. If we fix a vector $v\in A^* (X)_{\text{\rm num}, \Q}$, we then get an open and closed substack $\mls M_h(v)\subset \mls M_h$ classifying semistable sheaves on $X$ with Mukai vector $v$.  Since the Mukai vector of a sheaf determines the Hilbert polynomial, the stack $\mls M_h(v)$ is an algebraic stack of finite type over $k$.  In fact, it is a GIT quotient stack with projective GIT quotient variety (a reference for this point of view on moduli of sheaves, and more generally twisted sheaves,  is \cite[Section 2.3.3]{LL}).

\begin{thm}[{\cite[Theorem 5.1]{M} and  \cite[Section 4.2]{Orlov2}}] \label{T:modulitheorem}Let $X$ be a \text{\rm K3}  surface over a field $k$.

\begin{enumerate}
\item Let $v\in A^*(X)_{\text{\rm num}, \Q}$ be a primitive element with  $v^2=0$ (with respect to the Mukai pairing)  and positive degree $0$ part.  Then $\ms M_h(v)$ is non-empty.
\item If, in addition, there is a complex $P\in D(X)$ with Mukai vector $v'$ such that $\langle v,v'\rangle=1$ then every semistable sheaf with Mukai vector $v$ is locally free and geometrically stable  \cite[Remark 6.1.9]{HL}, in which case $\mls M_h(v)$ is a $\m_r$-gerbe over a smooth projective \text{\rm K3}  surface $M_h(v)$ such that the associated $\G_m$-gerbe is  trivial (in older language, a tautological family exists on $X\times M_h(v)$).
\item A tautological family $\ms E$ on $X\times M_h(v)$ induces a Fourier-Mukai equivalence
$$\Phi_{\ms E}:\D(M_h(v))\to\D(X),$$
and thus in the case when $k = \mathbb{C}$ an isomorphism of Mukai-Hodge lattices.
\end{enumerate}
Finally, if $k = \mathbb{C}$, then any FM partner of $X$ is of this form.
\end{thm}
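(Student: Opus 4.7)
The plan is to address the four assertions in turn, drawing on Proposition~\ref{P:adjoint-love} and the identity $\chi(F, F') = -\langle v(F), v(F')\rangle$ for sheaves on a K3 surface (a consequence of Grothendieck--Riemann--Roch together with the formula $(1,0,2)$ for the Todd class of the tangent bundle). For (1), I would follow Mukai's argument in \cite{M}: starting from a manifestly non-empty case such as $v_0 = (1, 0, 1-n)$ for which $\ms M_h(v_0) = \operatorname{Hilb}^n(X)$, one transports non-emptiness along the lattice orbit of primitive isotropic Mukai vectors using derived autoequivalences---compositions of tensor products with line bundles (acting by \eqref{E:Lformula}) and spherical reflections along $(-2)$-classes (Proposition~\ref{P:spherical reflection})---each of which pulls moduli problems back to equivalent moduli problems.

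For (2), the condition $\langle v, v'\rangle = 1$ forces $v$ to be primitive, which rules out strictly semistable sheaves: a Jordan--H\"older factor $F_0$ of a strictly semistable $F$ with $v(F) = v$ would have $v(F_0)$ proportional to $v$ in $\Chow(X) \otimes \Q$, and primitivity makes this impossible in a non-trivial way. Moreover, a stable torsion-free sheaf on a K3 surface with $v^2 = 0$ must be locally free, since the cokernel into the double dual would force $v^2 < 0$. Writing $M$ for the coarse moduli space, the tangent space at $[F]$ is $\Ext^1(F, F)$ of dimension $2$, computed from $\chi(F,F) = 0$, stability, and Serre duality; a standard check gives $\omega_M \simeq \ms O_M$ and $h^1(M, \ms O_M) = 0$, so $M$ is a K3 surface. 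The stack $\ms M_h(v)$ is a $\m_r$-gerbe over $M$ because stable sheaves with fixed determinant have $\m_r$ as automorphism group, and the existence of $v'$ with $\langle v, v'\rangle = 1$ trivializes the Brauer class of the associated $\G_m$-gerbe by a standard index argument, producing an honest tautological family $\ms E$ on $X \times M$.

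For (3), I apply Proposition~\ref{P:adjoint-love} to $P := \ms E$: it suffices to prove that $\bar\eta$ and $\bar\epsilon$ are isomorphisms. By Nakayama for perfect complexes, this can be checked pointwise on $M \times M$. Off the diagonal, $\rshom(\ms E_{[F]}, \ms E_{[F']}) = 0$ for $F \not\simeq F'$ follows from $\Hom(F, F') = 0$ (stability), $\Ext^2(F, F') = \Hom(F', F)^\vee = 0$ (Serre duality and $\omega_X \simeq \ms O_X$), and $\Ext^1(F, F') = 0$ (forced by $\chi(F, F') = -v^2 = 0$). On the diagonal one obtains the sphere-like Ext algebra $\Ext^\bullet(F, F) \simeq \Lambda^\bullet T_{[F]} M$ of total dimension $4$. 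The delicate point is matching the global maps \eqref{E:bareta} and \eqref{E:barepsilon} with $\Delta_*\ms O_M$ and $\Delta_*\ms O_X$ rather than merely verifying pointwise vanishing; here the existence of an honest (untwisted) tautological family from (2) is essential, as is the compatibility of the trace morphism \eqref{E:adjointmap} with the base.

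For the final statement: in characteristic $0$, any FM partner $Y$ of $X$ is already known to be a K3 surface. The equivalence $\D(X) \simeq \D(Y)$ induces a Hodge isometry $\widetilde H(Y, \Z) \simto \widetilde H(X, \Z)$ sending $(0, 0, 1) \in \widetilde H(Y, \Z)$ to some primitive isotropic $v \in \widetilde H^{1,1}(X, \Z)$; after composing with autoequivalences of $\D(X)$ (line bundle twists, shifts, spherical twists) one arranges that $v$ pairs with some $v'$ to give $1$. The classical Torelli theorem for K3 surfaces, combined with the characterization in (3) of the action of $\Phi_{\ms E}$ on cohomology when $\ms E$ is a tautological family, then identifies $Y$ with $M_h(v)$.
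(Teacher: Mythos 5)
The paper does not actually prove this statement: it is quoted from Mukai and Orlov (with \cite[Remark 6.1.9]{HL} for the stability and local freeness claims), and the only supplementary argument supplied is the remark immediately following it, namely that non-emptiness in positive characteristic is obtained by lifting $(X,h,v)$ to characteristic $0$ via Deligne's theorem, invoking Mukai's characteristic-zero result there, and specializing semistable sheaves back via Langton's theorem. Your part (1) takes a different route which, as written, has a genuine gap. First, tensoring by an arbitrary line bundle does not preserve Gieseker $h$-semistability, and a spherical twist sends sheaves to honest complexes; so the assertion that these autoequivalences ``pull moduli problems back to equivalent moduli problems'' is exactly the delicate wall-crossing/preservation-of-stability issue that needs proof. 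Second, even granting such a dictionary, the group generated by line-bundle twists, shifts and the available spherical reflections need not act transitively on primitive isotropic vectors in the \emph{algebraic} Mukai lattice of a fixed $X$ (for small Picard rank there may be essentially no $(-2)$-classes to reflect in); this is why Mukai deforms the surface itself inside the moduli of K3s, and why in characteristic $p$ the paper has to import that input through the lifting/specialization argument, which your sketch never engages.

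In part (2) there are two further problems. The claim that a Jordan--H\"older factor $F_0$ of a strictly semistable $F$ with $v(F)=v$ has $v(F_0)$ proportional to $v$ is false: JH factors share the same \emph{reduced Hilbert polynomial}, which only forces proportionality of $(\mathrm{rk},\, c_1\cdot h,\, \chi)$, not of the full Mukai vector; the correct use of $\langle v,v'\rangle=1$ is a divisibility argument as in \cite[Remark 6.1.9]{HL}. The local-freeness argument is also too quick: if $l$ is the length of $F^{\vee\vee}/F$, then $v(F^{\vee\vee})^2=v^2+2\langle v,(0,0,l)\rangle=-2rl$, and since simple sheaves may have square $-2$ one only concludes $rl\le 1$, so rank-one vectors are not excluded --- indeed for $v=(1,0,0)$ (which admits $v'$ with $\langle v,v'\rangle=1$) the semistable sheaves are ideal sheaves of points, showing that local freeness cannot drop out of this numerology alone and that the boundary case needs separate treatment. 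Your part (3) (Bridgeland's criterion via Proposition \ref{P:adjoint-love} together with the $\operatorname{Ext}$ computations off and on the diagonal) and your sketch of the final characteristic-zero statement along Orlov's lines are reasonable outlines, but the gaps in (1) and (2) are real.
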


\begin{remark}
  While the non-emptiness uses the structure of analytic moduli of \text{\rm K3} 
  surfaces (and deformation to the Kummer case), it still holds over
  any algebraically closed field. One can see this by lifting $v$ and
  $h$ together with $X$ using Deligne's theorem and then
  specializing semistable sheaves using Langton's theorem \cite[Theorem on p. 99]{Langton}.
\end{remark}

\begin{remark}
  Using the results of \cite{huy}, one can restrict to working with
  moduli of locally free slope-stable sheaves from the start.
\end{remark}

\begin{remark}  All the statements in \ref{T:modulitheorem} can be verified over an algebraic closure of $k$, except for the triviality of the $\mathbb{G}_m$-gerbe $\mls M\rightarrow M_h(v)$ associated to the $\m _r$-gerbe $\mls M_h(v)\rightarrow M_h(v)$ in (2).  The triviality of this gerbe can be seen as follows. Let $\alpha \in H^2(M_h(v), \mathbb{G}_m)$ be the class in the cohomological Brauer group corresponding to $\mls M$,   and let $\mls E$ be the universal bundle on $X\times \mls M$.  Then for every geometric point $\bar y\rightarrow X\times \mls M$ the stabilizer action of $\mathbb{G}_m$ on $\mls E(\bar y)$ is the standard action. Consider the complex $K:= R\text{pr}_{2*}(\text{pr}_1^*(P^\vee [1])\otimes \mls E)$ on $\mls M$.  It follows from \eqref{E:chiformula} that this complex has rank $1$ and therefore the determinant $L:= \text{det}(K)$ is an $\alpha $-twisted sheaf on $\mls M$ in the sense of \cite{L2}.  It follows that the gerbe is trivial (in fact a trivialization is provided by the complement of the zero section in the total space of $L$ which maps isomorphically to $M_h(v)$).
\end{remark}

\section{Zeta functions of FM partners over a finite field}
\label{S:zeta}

In this section we address a question due to Musta\c{t}\u{a} and communicated to us by Huybrechts: do Fourier-Mukai partners over a finite field have the same zeta function?

\begin{thm}\label{T:point count}
 Suppose $X$ and $Y$ are \text{\rm K3}  surfaces over a finite field $k$.  If there is an equivalence $\D(X)\simto\D(Y)$ of $k$-linear derived categories then for all finite extensions $k'/k$ we have that $$|X(k')|=|Y(k')|.$$
In particular, $\zeta_X=\zeta_Y$.
\end{thm}
\begin{proof}
 By \cite[Theorem 3.2.1]{Orlov}, there is a kernel $P\in\D(X\times Y)$ giving the equivalence. The Leftschetz fixed-point formula in crystalline cohomology shows that it is enough to see that the trace of Frobenius acting on $\Hcris^2$ is the same.  The kernel $P$ induces an isomorphism of $F$-isocrystals $$\widetilde H(X/K)\simto\widetilde H(Y/K).$$
Thus, the trace of Frobenius on both sides is the same.  On the other hand, it follows from the definition of the Mukai crystal that
$$\Tr(F | \widetilde H(X/K))=\Tr(F | \Hcris^2(X/K))+2p$$
and similarly for $Y$.  Thus
$$\Tr(F | \Hcris^2(X/K))=\Tr(F | \Hcris^2(Y/K)),$$
giving the desired result.
% By the calculations in Section \ref{S:etale realization}, the Galois representations $\widetilde H(\widebar X,\Z_\ell)$ and $\widetilde H(\widebar Y,\Z_\ell)$ are isomorphic.  In particular, the trace of any power of Frobenius is the same.  Since the outer terms are identical and the trace is additive, we conclude that the trace of any power of Frobenius on $\H^2(\widebar X,\Q_\ell)$ equals its trace on $\H^2(\widebar Y,\Q_\ell)$.  The Lefschetz trace formula then shows that the point counts must be the same, as desired.
\end{proof}

\section{Fourier-Mukai transforms and moduli of complexes}

In this section, we extend the philosophy of Mukai and Orlov and show that FM kernels on $X\times Y$  correspond to certain open immersions of $Y$ into a moduli space of complexes on $X$.  We start by reviewing the basic results of \cite{L} on moduli of complexes.

Given a proper morphism of finite presentation between schemes $Z\to S$, define a category fibered in groupoids as follows: the objects over an $S$-scheme $T\to S$ are objects $E\in\D(\ms O_{Z_T})$ of the derived category of sheaves of $\mls O_{Z_T}$-modules on $Z_T$ such that
\begin{enumerate}
\item for any quasi-compact $T$-scheme $U\to T$ the complex $E_U:=\L p^\ast E$ is bounded, where $p:Z_U\to Z_T$ is the natural map (``$E$ is relatively perfect over $T$'');
\item for every geometric point $s\to S$, we have that $\Ext^i(E_s,E_s)=0$ for all $i<0$ (``$E$ is universally gluable'').
\end{enumerate}

By  \cite[theorem on p. 2]{L}  this category fibered in groupoids is an Artin stack locally of finite presentation over $S$.  This stack is denoted $\ms D_{Z/S}$ (or just $\ms D_Z$ when $S$ is understood).

\begin{remark}\label{R:closedonly} In \cite[Proposition 2.1.9]{L}, it is proven that if $f:Z\to S$ is flat then a relatively perfect complex $E$ is universally gluable if and only if the second condition holds after base change to geometric points of $S$, and it is straightforward to see that it suffices to check at closed points of $S$ when the closed points are everywhere dense (e.g., $S$ is of finite type over a field).

Furthermore, suppose $f:Z\rightarrow S$ is smooth and that $S$ is of finite type over a field $k$. In this case it suffices to verify both conditions for geometric points lying over closed points of $S$ (i.e., we need not even assume $E$ is relatively perfect in order to get a fiberwise criterion).  This can be seen as follows.   First of all we may without loss of generality assume that $k$ is algebraically closed. Suppose  condition (2) holds for all closed points, and let $\bar \eta \rightarrow S$ be a geometric point lying over an arbitrary point $\eta \in S$.  Let $Z\hookrightarrow S$ be the closure of $\eta $ with the reduced structure.  Replacing $S$ by $Z$ we can assume that $S$ is integral with generic point $\eta $, and after shrinking further on $S$ we may also assume that $S$ is smooth over $k$.  Consider the complex
\begin{equation}\label{E:Homcomplex}
Rf_*\mls RHom(E, E)
\end{equation}
on $S$.  The restriction of this complex to $\bar \eta $ computes $\Ext ^i(E_{\bar \eta }, E_{\bar \eta })$, so it suffices to show that (\ref{E:Homcomplex}) is in $D^{\geq 0}(S)$.  Since $S$, and hence also $Z$ is smooth, the complex $\mls RHom(E, E)$ is a bounded complex on $S$.  Now by standard base change results, we can after further shrinking arrange that the sheaves $R^if_*\mls RHom(E, E)$ are all locally free on $S$, and that their formation commute with arbitrary base change on $S$.  In this case by our assumptions for $i<0$ these sheaves must be zero since their fibers over any closed point of $S$ is zero.
\end{remark}

Recall from \cite[4.3.1]{L} that an object $E\in \ms D_{Z/S}(T)$ over some $S$-scheme $T$ is called \emph{simple} if the natural map of algebraic spaces
$$
\mathbb{G}_m\rightarrow \mls Aut(E)
$$
is an isomorphism.  By \cite[4.3.2 and 4.3.3]{L}, the substack $s\ms D_{Z/S}\subset \ms D_{Z/S}$ classifying simple objects is an open substack, and in particular $s\ms D_{Z/S}$ is an algebraic stack.  Moreover, there is a natural map
$$
\pi :s\ms D_{Z/S}\rightarrow \sMom _{Z/S}
$$
from $s\ms D_{Z/S}$ to an algebraic space $\sMom_{Z/S}$ locally of finite presentation over $S$ which realizes $s\ms D_{Z/S}$ as a $\mathbb{G}_m$-gerbe.

Fix smooth projective varieties $X$ and $Y$ over a field $k$.
Let $\ms F\!\!\!\!\ms F$ be the groupoid of  complexes $P\in\D(X\times Y)$ such that the transform $$\Phi_P:\D(X)\to\D(Y)$$ is fully faithful. 

\begin{lem}\label{L:combine} Let $P\in \ms F\!\!\!\ms F$ be an object.
\begin{enumerate}
\item  [(i)] The complex $P$ is a simple object of $\ms D_{Y}(X)$. 
\item  [(ii)] The map $\bar \mu _P:X\rightarrow \sMom _Y$ obtained by composing the map $\mu _P:X\rightarrow s\ms D_Y$ defined by $P$ with $\pi :s\ms D_Y\rightarrow \sMom _Y$ is an open immersion.
\end{enumerate}
\end{lem}
\begin{remark} In what follows, we write $\mu _P:X\rightarrow s\ms D_Y$ for the morphism defined by an object $P\in \mc F\!\!\!\ms F$, as in statement (ii).
\end{remark}
\begin{proof}[Proof of \ref{L:combine}]
For (i) note that since $X$ is smooth the first condition ($P$ has finite Tor-dimension over $X$) is automatic.  It therefore suffices (using Remark \ref{R:closedonly}) to show that for any geometric point $\bar x\rightarrow X$ lying over a closed point of $X$ we have
$$
\text{Ext}^i(P_{\bar x}, P_{\bar x}) = 0
$$
for $i<0$, where $P_{\bar x}$ denotes the pullback of $P$ along
$$
\bar x\times Y\rightarrow X\times Y.
$$
This follows from proposition \ref{P:3.5} (i).

To prove (ii)  it suffices to show that the map $\bar \mu _P$  is an \'etale monomorphism.
For this  it suffices in turn to show  that for distinct closed points $x_1,x_2\in X(k)$ the objects $P_{x_1}$ and $P_{x_2}$ are not isomorphic, and that (see \cite[IV.17.11.1]{EGA})
$\bar \mu _P$ induces an isomorphism on tangent spaces.

To see that $P_{x_1}$ and $P_{x_2}$ are not isomorphic for $x_1\neq x_2$, note that since
 $\Phi_P$ is fully faithful, it yields isomorphisms
 $$
 \Ext^i_X(k(x_1),k(x_2))\simto\Ext^i_Y(P_{x_1},P_{x_2})
 $$
for all $i$.  Thus, if $x_1\neq x_2$ we have that
$$
\Hom_Y(P_{x_1},P_{x_2})=0,
$$
 so that $P_{x_1}\not\cong P_{x_2}$.

Next we show that $\bar \mu _P$ induces an isomorphism on tangent spaces.

Recall from \cite[3.1.1]{L} the following description of the tangent space to $\sMom _Y$ at a point corresponding to a complex $E$.  First of all since
$$
\pi :\ms D _Y\rightarrow \sMom _Y
$$
is a $\mathbb{G}_m$-gerbe, the tangent space $T_{[E]}\sMom _Y$ to $\sMom _Y$ at $E$ is given by the set of isomorphism classes of pairs $(E', \sigma )$, where $E'\in \ms D_Y(k[\epsilon ])$ and $\sigma :E'\otimes ^{\mathbb{L}}k\rightarrow E$ is an isomorphism.  Tensoring the exact sequence
$$
0\rightarrow \mls O_Y\otimes _k(\epsilon )\rightarrow \mls O_{Y_{k[\epsilon ]}}\rightarrow \mls O_Y\rightarrow 0
$$
with $E'$ we see that a deformation $(E', \sigma )$ of $E$ to $k[\epsilon ]$ induces a distinguished triangle
$$
\xymatrix{
E\otimes (\epsilon )\ar[r]& E'\ar[r]& E\ar[r]^-{\partial _{(E', \sigma )}}&E\otimes (\epsilon )[1].}
$$
In this way we obtain a map
\begin{equation}\label{E:tangentmap}
T_{[E]}\sMom _Y\rightarrow \Ext ^1_{Y_{k[\epsilon ]}}(E, (\epsilon )\otimes E), \ \ \ (E', \sigma )\mapsto [\partial _{(E', \sigma )}].
\end{equation}
This map is a morphism of $k$-vector spaces.  It is injective as the isomorphism class of $(E', \sigma )$ can be recovered from $\partial _{(E', \sigma )}$ by taking a cone of $\partial _{(E', \sigma )}$  and rotating the resulting triangle.

The image of (\ref{E:tangentmap}) can be described as follows.   The usual derived adjunction formula gives an isomorphism
$$
\mls RHom _{\mls O_{Y_{k[\epsilon ]}}}(E, (\epsilon )\otimes E)\simeq \mls RHom _{\mls O_Y}(E\otimes _k(k\otimes _{k[\epsilon ]}^{\mathbb{L}}k), (\epsilon )\otimes E).
$$
Now we have
$$
k\otimes _{k[\epsilon ]}k\simeq \oplus _{i\geq 0}k[i],
$$
so this gives
$$
\Ext ^1_{Y_{k[\epsilon ]}}(E, (\epsilon )\otimes E)\simeq \oplus _{i\geq 0}\Ext ^{1-i}_{Y}(E, (\epsilon )\otimes E).
$$
Since $E$ is universally gluable this reduces to an exact sequence
$$
0\rightarrow \Ext ^1_{Y}(E, (\epsilon )\otimes E)\rightarrow \Ext ^1_{Y_{k[\epsilon ]}}(E, (\epsilon )\otimes E)\rightarrow \text{Hom}_Y(E, (\epsilon )\otimes E)\rightarrow 0.
$$
As explained in \cite[proof of 3.1.1]{L}, the image of (\ref{E:tangentmap}) is exactly
$$
\Ext ^1_{Y}(E, (\epsilon )\otimes E)\subset  \Ext ^1_{Y_{k[\epsilon ]}}(E, (\epsilon )\otimes E).
$$

Now taking $E = P_x$ for a closed point $x\in X(k)$, we get by applying the fully faithful functor $\Phi ^{P}$ an isomorphism
$$
\xymatrix{
\Ext ^1_X(k(x), (\epsilon )\otimes _kk(x))\ar[r]^-{\Phi ^P}& \Ext ^1_Y(P_x, (\epsilon )\otimes _kP_x)\ar[r]^-{\simeq }& T_{[P_x]}\sMom _Y.}
$$
On the other hand, applying $\Hom _X(-, k(x))$ to the short exact sequence
$$
0\rightarrow I_x\rightarrow \mls O_X\rightarrow k(x)\rightarrow 0,
$$
where $I_x$ denotes the coherent sheaf of ideals defining $x$, we get an exact sequence
$$
\Hom _X(\mls O_X, k(x))\rightarrow \Hom _X(I_x, k(x))\rightarrow \Ext ^1_X(k(x), k(x))\rightarrow 0.
$$
Since any morphism $\mls O_X\rightarrow k(x)$ factors through $k(x)$, this gives an isomorphism
$$
T_xX = \Hom (I_x/I_x^2, k(x))\simeq \Ext ^1_X(k(x), k(x)).
$$
Putting it all together we find an isomorphism
$$
T_xX\simeq T_{[P_x]}\sMom _Y.
$$
We leave to the reader the verification that this isomorphism is the map induced by $\bar \mu _P$, thereby completing the proof that $\bar \mu _P$ is \'etale.
\end{proof}

\begin{notation}
 Let $s\ms D_{Y}(X)^\circ $ be the groupoid of morphisms $$\mu :X\to s\ms D _Y$$ such that the composed map
$$\xymatrix{X\ar[r]^-{\mu }\ar[dr]_{\overline\mu} & \Mom_Y\ar[d]\\
&\sMom_Y}$$
is an open immersion.  
\end{notation}

\begin{cor}\label{P:open in mother}   The map $\ms F\!\!\!\!\ms F\rightarrow s\ms D_Y(X)$ defined by sending $P$ to $\mu _P$ induces 
 a fully faithful functor of groupoids 
 $$\Xi:\ms F\!\!\!\!\ms F\to \Mom_Y(X)^\circ.$$
\end{cor}
\begin{proof}
Indeed this follows from \ref{L:combine} and the observation that  the functor $\Xi$ is fully faithful by the definition of the stack $\Mom_Y$. 
\end{proof}

\begin{remark} Note that in the above we consider only complexes on $X\times Y$ and morphisms between them, and not the resulting functors between derived categories.  Related to this we mention the observation of C\u{a}ld\u{a}raru that the map $\text{Hom}_{D(X\times Y)}(P, P')\rightarrow \text{Hom}(\Phi _P, \Phi _{P'})$ is not in general an isomorphism.
\end{remark}

\section{A Torelli theorem in the key of $\D$}

Fix \text{\rm K3}  surfaces $X$ and $Y$ over an algebraically closed field $k$.

In this section we prove the following derived category version of the Torelli theorem that has no characteristic restrictions.  It is similar to the classical Torelli theorem in that it specifies that some kind of ``lattice'' isomorphism preserves a filtration on an associated linear object.

\begin{thm}\label{T:derived torelli}
 If there is a kernel $P\in\D(Y\times X)$ inducing a filtered equivalence $\D(Y)\to\D(X)$ (see Paragraph \ref{Para:filt}) then $X$ and $Y$ are isomorphic.
\end{thm}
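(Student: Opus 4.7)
The plan is to realize $Y$ as an open subscheme of the moduli space of simple perfect complexes on $X$ via Proposition~\ref{P:open in mother}, and then to identify this image with $X$ itself, viewed as the fine moduli space of skyscraper sheaves. The heart of the argument is to show that, after a suitable normalization, each $P_y:=\Phi _P(\mls O_y)\in \D(X)$ is a skyscraper sheaf $\mls O_{x(y)}$ for a unique closed point $x(y)\in X$.

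First I would use filteredness to pin down the Mukai vector of $P_y$. For any closed point $y\in Y$ the vector $v(\mls O_y)=(0,0,1)$ lies in $F_Y^2$, so by the filteredness hypothesis $v(P_y)=\Phi _P^{\Chow}(v(\mls O_y))\in F_X^2=\Chow ^2(X)$, i.e.\ $v(P_y)=(0,0,n)$ for some integer $n$ which is constant on the connected base $Y$. Since $\Phi _P$ preserves the Mukai pairing and $\langle v(\mls O_Y[1]),v(\mls O_y)\rangle =1$, writing $v(\Phi _P(\mls O_Y[1]))=(a,b,c)$ gives $-an=1$, so $n=\pm 1$. Replacing $P$ by $P[1]$ if necessary (this negates the Chern character and hence preserves the codimension filtration, as well as the equivalence property) we may assume $n=1$.

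Next I would show that each $P_y$ is a skyscraper sheaf. By Proposition~\ref{P:3.5}(ii) we have $\Ext ^\bullet (P_y,P_y)\simeq \Lambda ^\bullet T_yY$, concentrated in degrees $0,1,2$ with dimensions $1,2,1$; in particular $\Ext ^{<0}(P_y,P_y)=0$ and $\Hom (P_y,P_y)=k$. Combined with $v(P_y)=(0,0,1)$, the vanishing of the rank and $c_1$ components forces the cohomology sheaves of $P_y$ to be supported in codimension $\geq 2$, hence on a finite set of points of $X$. On such a zero-dimensional support $P_y$ becomes a perfect complex over a local Artinian ring, and simpleness together with universal gluability then force $P_y$ to be concentrated in a single cohomological degree; the class $\chern _2=1$ forces this sheaf to be of length one. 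After an additional even shift of $P$ (the shift being locally constant in $y$, hence constant on the connected $Y$), we conclude $P_y\simeq \mls O_{x(y)}$ for every $y$. This classification step is the main technical obstacle, as one must carefully rule out exotic simple perfect complexes whose multiple cohomology sheaves have Mukai vectors summing to $(0,0,1)$.

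Finally, by the Mukai--Orlov theorem recalled in Section~\ref{modulidef}, the fine moduli space $\mathcal M_h(0,0,1)$ of simple skyscraper sheaves on $X$ is isomorphic to $X$ via the tautological family $\mls O_\Delta $ on $X\times X$, and it sits as an open substack of $s\ms D_X$ (being a sheaf concentrated in a single degree is an open condition on perfect complexes). By the previous step, the morphism $\mu _P\colon Y\to s\ms D_X$ from Proposition~\ref{P:open in mother} factors set-theoretically, hence scheme-theoretically, through $X\hookrightarrow s\ms D_X$, producing a morphism $f\colon Y\to X$. Since $\bar\mu _P\colon Y\to \sMom _X$ is an open immersion and $X\hookrightarrow \sMom _X$ is an open immersion, $f$ is an open immersion of smooth projective K3 surfaces. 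Because $Y$ is projective, $f(Y)$ is closed in $X$; by irreducibility of $X$ we get $f(Y)=X$, and $f\colon Y\stackrel{\sim }{\to }X$ is the desired isomorphism.
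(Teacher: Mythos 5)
Your reduction of the Mukai vector of $P_y$ to $(0,0,\pm 1)$ is fine, but the classification step you yourself flag as the main technical obstacle is not merely difficult: it is false, and the strategy collapses with it. On a K3 surface containing a $(-2)$-curve $C$, the spherical twist $T_{\mls O_C}$ composed with tensoring by $\mls O(C)$ is a filtered autoequivalence (this is exactly the representation of reflections used in Lemma \ref{L:line em up}), yet for a point $x\in C$ the image of $k(x)$ is a two-term complex with cohomology sheaves $\mls O_C(-x)$ and $\mls O_C$ (up to the line bundle twist). This object is simple, universally gluable, has Mukai vector $(0,0,1)$ and Ext-algebra $\Lambda^\bullet k^2$, being the image of $k(x)$ under an equivalence, but it is not a shifted skyscraper. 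Note also that its cohomology sheaves are supported on $C$, in codimension one: your inference that vanishing of the rank and $c_1$ components of the total Mukai vector forces zero-dimensional support of the cohomology sheaves is incorrect, since the classes of the individual cohomology sheaves can cancel. So a filtered kernel need not send every point sheaf to a shifted point sheaf, and $\mu_P$ need not factor through the skyscraper locus of $\sMom_X$; repairing this would require first composing $P$ with further autoequivalences (twists along $(-2)$-curves, etc.) so that points go to points, and establishing that this is possible is essentially as deep as the theorem itself.

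The paper's proof takes a genuinely different route. After the cohomological normalization of Lemma \ref{L:line em up}, Proposition \ref{P:open in mother} is used not to identify $Y$ with a moduli space of skyscrapers but to deform $Y$ together with the kernel along deformations of $X$ (Propositions \ref{P:space isom} and \ref{P:untwist}); one then lifts a polarized $(X,H_X)$ to characteristic zero by Deligne's theorem, algebraizes $Y$ and $P$ by Grothendieck existence, and concludes over $\C$ from the fact that a filtered equivalence induces a Hodge isometry on $H^2$, invoking the classical Torelli theorem and descending the isomorphism via Matsusaka--Mumford. In other words, the filtered hypothesis is exploited only through its action on cohomology after lifting, never through a pointwise classification of the fibers $P_y$, which is exactly the step your argument cannot supply.
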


We prove some auxiliary results before attacking the proof.
By abuse of notation we will write $\Phi$ for both $\Phi_P$ and $\Phi_P^{A^*(-)_{\text{\rm num}, \Q}}$.  Let $C_X\subset \Pic(X)$ (resp.\ $C_Y\subset \Pic(Y)$) denote the ample cone of $X$ (resp.\ $Y$).

\begin{lem}\label{L:line em up}
 In the situation of Theorem \ref{T:derived torelli}, we may assume that $\Phi(1,0,0)=(1,0,0)$ and that the induced isometry $\kappa :\Pic(Y)\to\Pic(X)$ 
 has one of the following two properties:
\begin{enumerate}
\item[I.] $\kappa $ sends  $C_Y$ isomorphically to $C_X$.
\item[II.] $\kappa $ sends $C_Y$ isomorphically to $-C_X$.
\end{enumerate}
%  \noindent {\rm Case I:} $\kappa $ sends  $C_Y$ isomorphically to $C_X$.
%\noindent {\rm Case II:} $\kappa $ sends $C_Y$ isomorphically to $-C_X$.
\end{lem}
\begin{proof}
By the assumption that $\Phi$ is filtered, we have that $\Phi(0,0,1) = (0,0,1)$ or $(0,0,-1)$. Composing with the shift functor $\ms F\mapsto\ms F[1]$ if necessary, we may assume that $\Phi(0,0,1)=(0,0,1)$.
 Since $\Phi$ is an isometry, $(1,0,0)\cdot(0,0,1)=-1$, and $(1,0,0)^2=0$, we see that there is some $b\in\Pic(X)$ such that
 $$\Phi(1,0,0)=\left(1,b,\frac{1}{2}b^2\right).$$
 Composing $\Phi$ by the twist with $-b$ yields a new Fourier-Mukai transformation sending $(0,0,1)$ to $(0,0,1)$ and $(1,0,0)$ to $(1,0,0)$ (by the formula \ref{E:Lformula}).  Since $\Pic(X)=(0,0,1)^{\perp}\cap(1,0,0)^{\perp}$ and similarly for $\Pic(Y)$, we see that such a $\Phi$ induces an isometry $\Pic(Y)\simto\Pic(X)$.  Following \cite[p. 366]{Ogus} set
$$
V_X:= \{x\in \Pic(X)_{\mathbb{R}}| x^2>0, \ \ \langle x, \delta \rangle \neq 0 \text{ \ for all $\delta \in \Pic (X)$ with $\delta ^2 = -2$}\}
$$
and define $V_Y$ similarly.
 Then since $\Phi $ is an isometry it induces an isomorphism $V_Y\rightarrow V_X$.
 By results of Ogus \cite[Proposition 1.10 and Remark 1.10.9]{Ogus}, we know that the ample cone is a connected component $V_X$, and that the group $R$ generated by reflections in $(-2)$-curves and multiplication by $-1$ acts simply transitively on the set of connected components of $V_X$, and similarly for $V_Y$.  In particular, there is some element $\rho:\Pic(X)\to\Pic(X)$ of this group $R$ such that the composition $\rho\circ\Phi:\Pic(Y)\simto\Pic(X)$ induces an isomorphism of ample cones.

 We claim that there is a representation of $R$ as a group of Fourier-Mukai autoequivalences of $X$ whose induced action on $A^*(X)_{\text{\rm num}}=\Z\oplus\Pic(X)\oplus\Z$ is  equal to the standard action on $\Pic (X)$ and multipliciation by $\pm 1$ on the outer summands (with the same sign for both factors). This will clearly complete the proof of the Lemma.

To define this embedding of $R$, suppose $C\subset X$ is a $(-2)$-curve.  The structure sheaf $\ms O_C$ is a spherical object of $\D(X)$ (see Definition \ref{D:spherical}), and the spherical twist $T_{\ms O_C}:\D(X)\simto\D(X)$ acts on $\H(X)$ by reflecting in the Mukai vector $(0,C,1)$.  Composing this twist with the tensoring equivalence $\tensor\ms O(C):\D(X)\to\D(X)$ gives a Fourier-Mukai equivalence whose induced action on $A^*(X)_{\text{\rm num}}=\Z\oplus\Pic(X)\oplus\Z$ is the identity on the outer summands and the reflection in $C$ on $\Pic(X)$.  Similarly, the shift isomorphism $\ms F\mapsto\ms F[1]:\D(X)\to\D(X)$ acts by $-1$ on $A^*(X)_{\text{\rm num}}$.  This establishes the claim.
\end{proof}

We will assume that our kernel $P$ satisfies the conclusions of Lemma \ref{L:line em up}.

\begin{prop}\label{P:space isom}
There is an isomorphism of infinitesimal deformation functors $\delta:\Def_X\to\Def_Y$ such that
 \begin{enumerate}
 \item $\delta^{-1}(\Def_{(Y,L)})=\Def_{(X,\Phi(L))}$ for all $L\in\Pic(Y)$;
 \item for each augmented Artinian $W$-algebra $W\to A$ and each $$(X_A\to A)\in\Def_{(X,H_X)}(A),$$ there is an object $$P_A\in\D(X_A\times_A \delta(X_A))$$ reducing to $P$ on $X\times Y$.
 \end{enumerate}
\end{prop}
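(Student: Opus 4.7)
The plan is to construct $\delta$ via the relative moduli space of simple universally gluable complexes on deformations of $X$, following the philosophy of Proposition \ref{P:open in mother}. Applied to $P \in \D(Y\times X)$ (swapping the roles of $X$ and $Y$ in that proposition), the kernel yields an open immersion $\bar\mu_P: Y \inj \sMom_X$ lifting to a section $\mu_P: Y \to s\ms D_X$ of the tautological $\G_m$-gerbe. For a deformation $X_A$ of $X$ over an Artinian $W$-algebra $A$, I would form the relative moduli space $\sMom_{X_A/A}$, a locally finitely presented algebraic space over $A$ whose special fiber is $\sMom_X$. Using Proposition \ref{P:3.5}, which identifies $\Ext^\bullet(P_y, P_y) \cong \Lambda^\bullet T_y Y$, the standard obstruction theory (trace-free obstruction in $\Ext^2_0(P_y, P_y) = 0$) gives that $\sMom_{X_A/A}$ is smooth over $A$ of relative dimension $2$ along $Y$. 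I set $\delta(X_A) := Y_A$, where $Y_A \subset \sMom_{X_A/A}$ is the unique open subspace with the same underlying topological space as $Y$. This is functorial in $A$, and running the symmetric construction with an inverse kernel to $\Phi_P$ (whose existence and uniqueness are supplied by Theorem \ref{Orlov} applied to $\Phi_P^{-1}$) produces an inverse $\delta': \Def_Y \to \Def_X$, showing that $\delta$ is an isomorphism.

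For property (1), the tangent space to $\Def_{(X,M)}$ is the kernel of cup product with $c_1(M)$ on $H^1(X, T_X) \to H^2(X, \ms O_X)$. The identity $\delta^{-1}(\Def_{(Y,L)}) = \Def_{(X,\Phi(L))}$ thus reduces to compatibility of the Kodaira--Spencer map under $\delta$ with cup product against Chern classes, which follows from Proposition \ref{P:chow-compatibility} (compatibility of the Chow and cohomological realizations of the Mukai motive) together with the agreement of $c_1(L)$ and $c_1(\Phi(L))$ under the isometry on Picard groups from Lemma \ref{L:line em up}.

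Property (2) is the main technical step. A lift $P_A \in \D(X_A \times_A Y_A)$ of $P$ is equivalent to a lift of the section $\mu_P: Y \to s\ms D_X$ along $Y \inj Y_A$ to a section $Y_A \to s\ms D_{X_A/A}$ of the $\G_m$-gerbe. By d\'evissage on the length of $A$, it suffices to treat a small extension $A' \to A$ with square-zero kernel $J$ and build $P_{A'}$ from $P_A$; the obstruction then lies in $\Ext^2_{X\times Y}(P, P) \otimes_k J$. The hardest point, and the main obstacle of the proof, is showing that this class vanishes. I would identify it with the discrepancy between the Kodaira--Spencer classes of $X_{A'}/X_A$ and of $Y_{A'}/Y_A$ under the map on tangent complexes induced by $\bar\mu_P$ (using $\Ext^1(P_y, P_y) \cong T_y Y$), and then invoke the very definition of $Y_{A'}$ as the open lift of $Y_A$ inside $\sMom_{X_{A'}/A'}$: this lift is precisely engineered so that its Kodaira--Spencer class matches the one propagated from $X_{A'}$ through the moduli, so the discrepancy vanishes. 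Hence the section lifts, yielding the desired complex $P_A$.
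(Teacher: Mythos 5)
Your construction of $\delta$ follows the same skeleton as the paper (take $\delta(X_A)$ to be the open subspace of the relative moduli space of complexes $\sMom_{X_A/A}$ supported on $Y$, and run the symmetric construction for an inverse), but your treatment of part (2), which is the heart of the proposition, has a genuine gap. What the moduli space hands you over $Y_A$ is not a complex on $Y_A\times_A X_A$ but a complex of twisted sheaves on the $\G_m$-gerbe $\ms Y_A\times X_A$, where $\ms Y_A=Y_A\times_{\sMom_{X_A/A}}s\ms D_{X_A/A}$; producing $P_A$ amounts to untwisting, i.e.\ to exhibiting an invertible $\ms Y_A$-twisted sheaf (equivalently, lifting the section $\mu_P$ of the gerbe, as you say). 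You propose to kill the obstruction in $\Ext^2_{X\times Y}(P,P)\otimes J$ by matching Kodaira--Spencer classes ``by the very definition of $Y_{A'}$.'' But that matching only accounts for the geometric direction of the obstruction space (the $\H^1(Y,T_Y)$-type piece), and that much is indeed built into the choice of $Y_{A'}$ inside $\sMom_{X_{A'}/A'}$. The obstruction to deforming the kernel has a further component, the one measuring whether the gerbe $\ms Y_{A'}\to Y_{A'}$ can be trivialized compatibly with the trivialization over $Y$ (an $\H^2(Y,\ms O_Y)\otimes J$-type, i.e.\ Brauer, direction), and no statement about tangent complexes or Kodaira--Spencer classes touches it. The paper kills exactly this component by an explicit construction (Proposition \ref{P:untwist}): the complex $\R(\pr_1)_*\bigl(\ms P_A^\vee\tensor\pr_2^*\omega_{X_A}[2]\bigr)$ has rank $1$ because $\Phi(1,0,1)=(1,0,1)$ --- this is where the normalization of Lemma \ref{L:line em up} actually enters --- and its determinant is the invertible twisted sheaf used to untwist $\ms P_A$ compatibly with $P$. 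Your argument never uses that normalization in part (2) and supplies no mechanism to make the gerbe class die, so the central step is unjustified.

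Two further points. For (1), comparing tangent spaces (kernels of cup product with $c_1$) only compares the two subfunctors to first order at the closed point; it does not identify $\delta^{-1}(\Def_{(Y,L)})$ with $\Def_{(X,\Phi(L))}$ on all Artinian rings, and Proposition \ref{P:chow-compatibility} concerns cycle class maps and says nothing about Kodaira--Spencer maps or obstruction classes, so the cited ingredient does not deliver the compatibility you need. The paper instead deduces (1) directly from (2): if $L$ lifts to $\delta(X_A)$, transport the lift through the lifted kernel and take determinants to get a lift of $\Phi(L)$ to $X_A$. Finally, that $\delta'$ inverts $\delta$ is asserted rather than proved; the paper's argument lifts the composed kernel and uses the quasi-isomorphism $\ms O_{\Delta_X}\simto P^\vee\circ P$ to see that the lift is the graph of an isomorphism $X_A\simto\eta(X_A)$. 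These last two items are repairable along the paper's lines, but as written the proof of (2) --- and hence of the proposition --- is incomplete.
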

\begin{proof}
Given an augmented Artinian $W$-algebra $W\to A$ and a deformation $X_A\to A$, let $\ms D_A$ denote the stack of unobstructed universally gluable relatively perfect complexes with Mukai vector $(0,0,1)$ on $X_A$.  By Corollary  \ref{P:open in mother}, the kernel $P$ defines an open immersion $Y\inj D_k$ such that the fiber product $\G_m$-gerbe $$\ms Y:=Y\times_{D_k}\ms D_k\to Y$$ is trivial (the complex $P$ defines a section of this gerbe).

Since $Y\to D_A\tensor k$ is an open immersion and $D_A$ is smooth over $A$, we see that the open subscheme $Y_A$ of $D_A$ supported on $Y$ gives a flat deformation of $Y$ over $A$, carrying a $\G_m$-gerbe $\ms Y_A\to Y_A$. Write $\ms P_A$ for the perfect complex of $\ms Y_A\times X_A$-twisted sheaves corresponding to the natural inclusion $\ms Y_A\to\ms D_A$. Write $\pi:\ms Y_A\times X_A\to Y_A\times X_A$.

\begin{prop}\label{P:untwist}
 With the  preceding notation, there is an invertible sheaf $\ms L$ on $\ms Y_A\times X_A$ such that the complex $$P_A:=\R(\pi_\ast\ms P_A\tensor\ms L_A^{\vee})\in\D(Y_A\times_A X_A)$$
 satisfies $$\L\iota^\ast P_A\cong P\in\D(Y\times X),$$
 where $\iota:Y\times X\inj Y_A\times_A X_A$ is the natural inclusion.
\end{prop}
\begin{proof}
 Consider the complex $\ms Q:=\ms P_A^{\vee}\tensor\pr_2^\ast\omega_{X_A}[2]$.  Pulling back by the morphism $$g:Y\times X\to \ms Y_A\times_A X_A$$ corresponding to $P$ yields the equality $$\L g^\ast\ms Q=P^\vee .$$  It follows that $$\R(\pr_1)_\ast(\ms Q)$$ is a perfect complex on $\mls Y_A$  whose pullback via the section $Y\to\ms Y$ is $\Phi^{-1}(\ms O_X)$.  Since $$\Phi(1,0,1)=(1,0,1),$$ this complex has rank $1$ and
$$\L g^\ast\det\R(\pr_1)_\ast(\ms Q)=\det\R(\pr_1)_\ast( P^\vee )\cong\ms O_Y.$$
 It follows that $$P\cong\ms P\tensor\det\R(\pr_1)_\ast(\ms Q).$$
Setting $$\ms L=\det\R(\pr_1)_\ast(\ms Q)^\vee$$
completes the proof.
\end{proof}

We can now prove part (ii) of Proposition \ref{P:space isom}:
the scheme $Y_A$ defined before Proposition \ref{P:untwist} gives a point of $\Def_Y(A)$, giving the functor $$\delta:\Def_X\to\Def_Y,$$ and Proposition \ref{P:untwist} shows that $P$ lifts to $$P_A\in\D(Y_A\times_A X_A),$$ as desired.

A symmetric construction starting with the inverse kernel $P^\vee$ yields a map $$\delta':\Def_Y\to\Def_X$$ and lifts $$P^\vee_A\in\D(\delta'(Y_A)\times Y_A).$$  Composing the two yields an endomorphism $$\eta:\Def_X\to\Def_X$$ and, for each $A$-valued point of $\Def_X$, lifts of $$P^\vee \circ P$$ to a complex $$Q_A\in\D(X_A\times\eta(X_A)).$$  But the adjunction map yields a quasi-isomorphism $$\ms O_{\Delta_X}\simto P^\vee \circ P,$$ so $Q_A$ is a complex that reduces to the sheaf $\ms O_{\Delta_X}$ via the identification $$\eta(X_A)\tensor k\simto X.$$  It follows that $Q_A$ is the graph of an isomorphism $$X_A\simto\eta(X_A),$$ showing that $\delta'\circ\delta$ is an automorphism of $\Def_X$, whence $\delta$ is an isomorphism.

Now suppose $Y_A$ lies in $\Def_{(Y,L)}$.  Applying $P_A^\vee$ yields a complex $C_A$ on $X_A$ whose determinant restricts to $\Phi(L)$ on $X$.  It follows that $X_A$ lies in $\Def_{(X,\Phi(L))}$, as desired.

This completes the proof of Proposition \ref{P:space isom}.
\end{proof}

\begin{proof}[Proof of Theorem \ref{T:derived torelli}]
Choose ample invertible sheaves $H_X$ and $H_Y$ that are not divisible by $p$ such that either  $H_X=\Phi(H_Y)$ (case I) or $H_X = -\Phi (H_Y)$ (case II).  Deligne showed in \cite{D} that there is a projective lift $(X_V,H_{X_V})$ of $(X,H_X)$ over a finite extension $V$ of the Witt vectors $W(k)$.   For every $n\geq $ let $V_n$ denote the quotient of $V$ by the $n$-th power of the maximal ideal, and let $K$ denote the field of fractions of $V$.

By Proposition \ref{P:space isom}, for each $n$ there is a polarized lift $(Y_n,H_{Y_n})$ of $(Y,H_Y)$ over $V_n$ and a complex $$P_n\in\D(Y_n\times X_n)$$
lifting $P$.  By the Grothendieck Existence Theorem, the polarized formal scheme $(Y_n,H_{Y_n})$ is algebraizable, so that there is a lift $(Y_V,H_{Y_V})$ whose formal completion is $(Y_n,H_{Y_n})$.

By the Grothendieck Existence Theorem for perfect complexes \cite[Proposition 3.6.1]{L}, the system $(P_n)$ of complexes is the formal completion of a perfect complex $$P_V\in\D(Y_V\times_V X_V).$$ In particular, $P_V$ lifts $P$ and Nakayama's Lemma shows that the adjunction maps $$\Delta_\ast\ms O_{X}\to P_V\circ P_V^\vee $$ and $$P_V^\vee \circ P_V\to\Delta_\ast\ms O_{Y}$$ are quasi-isomorphisms.  It follows that for any field extension $K'/K$, the generic fiber complex
$$P_{K'}\in\D(Y_{K'}\times_{K'} X_{K'})$$
induces a Fourier-Mukai equivalence
$$\Phi:\D(Y_{K'})\to\D(X_{K'}),$$
and compatibility of $\Phi$ with reduction to $k$ shows that $\Phi(0,0,1)=(0,0,1)$.  Choosing an embedding $K\inj\C$ yields a filtered Fourier-Mukai equivalence $$\D(Y_V\tensor\C)\simto\D(X_V\tensor\C).$$
Since $\Phi$ is filtered and induces an isometry of integral Mukai lattices, $\Phi$ induces a Hodge isometry $$\H^2(Y_V\tensor\C,\Z)\simto\H^2(X_V\tensor\C,\Z)$$ (see e.g.\ part (i) of the proof  \cite[Proposition 10.10]{H}), so that $Y_V\tensor\C$ and $X_V\tensor\C$ are isomorphic.  Spreading out, we find a finite extension $V'\supset V$ and isomorphisms of the generic fibers $X_{K'}\simto Y_{K'}$.  
By the following lemma it follows that $X$ and $Y$ are isomorphic, as desired.
\end{proof}

\begin{lem}\label{L:unique spec}
Let $Y$ and $Z$ be relative \text{\rm K3}  surfaces over a discrete valuation ring $R$.  If the generic fibers of $Y$ and $Z$ are isomorphic then so are the special fibers.
\end{lem}
\begin{proof}
Applying \cite[Theorem 1]{MM}, any isomorphism of generic fibers yields a birational isomorphism of the special fibers.  Since \text{\rm K3}  surfaces are minimal, this implies that the special fibers are in fact isomorphic, as desired.
\end{proof}
Note that we are not asserting that isomorphisms extend, only that isomorphy extends!

\section{Lifting kernels using the Mukai isocrystals}
\label{S:crystal lift}

Let $k$ be a perfect field of characteristic $p>0$, let $W$ be the ring of Witt vectors of $k$, and let $K$ be the field of fractions of $W$.

Fix \text{\rm K3}  surfaces $X$ and $Y$ over  $k$ with lifts $X_W$ and $Y_W$ over $W$.  The Hodge filtrations on the de Rham cohomology of $X_W/W$ and $Y_W/W$ give subspaces $\Fil^2_X\subset\H^2(X/K)\subset\widetilde H(X/K)$ and $\Fil^2_Y\subset\H^2(Y/K)\subset\widetilde H(Y/K)$, where $\widetilde H(X/K)$ and $\widetilde H(Y/K)$ denote the crystalline realizations of the Mukai motives.

\begin{thm}\label{T:kernel lift}
 Suppose $P\in\D(X\times Y)$ is a kernel whose associated functor $\Phi:\D(X)\to\D(Y)$ is fully faithful.  If $$\Phi^{\widetilde H}:\widetilde H(X/K)\to\widetilde H(Y/K)$$ sends $\Fil^2_X$ to $\Fil^2_Y$ then $P$ lifts to a perfect complex $P_W\in\D(X_W\times_W Y_W)$.
\end{thm}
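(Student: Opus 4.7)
The plan is a step-by-step infinitesimal lift followed by formal algebraization. Setting $W_n := W/p^{n+1}W$, $X_n := X_W \otimes_W W_n$, $Y_n := Y_W \otimes_W W_n$, I would inductively extend $P_0 := P$ to lifts $P_n \in \D(X_n \times_{W_n} Y_n)$, then apply Grothendieck existence for perfect complexes (\cite[Proposition 3.6.1]{L}) to produce $P_W$, exactly as at the end of the proof of Theorem \ref{T:derived torelli}. The substantive content is the vanishing of the deformation obstruction at each step.

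By the deformation theory of \cite{L}, the obstruction to extending $P_n$ across $X_n \times Y_n \hookrightarrow X_{n+1} \times Y_{n+1}$ is a class
$$ o_n \in \Ext^2_{X \times Y}(P, P) \otimes_k (p^{n+1}W/p^{n+2}W), $$
computable as the cup product $\text{At}(P) \cup \kappa_n$ of the Atiyah class with the Kodaira--Spencer class of the thickening; since the latter is a product of the fixed deformations, $\kappa_n$ splits as $\kappa_{X,n} + \kappa_{Y,n}$ under $T_{X\times Y} = \pr_X^*T_X \oplus \pr_Y^*T_Y$. Passing through the Chern character / semi-regularity map into the Mukai crystalline cohomology of $X_W \times_W Y_W$, the image of $o_n$ becomes the contraction of $v(P) = \chern(P)\sqrt{\Td_{X\times Y}}$ with $\kappa_n$; the fully faithful hypothesis together with Proposition \ref{P:3.5} confines $\Ext^*(P,P)$ to a summand on which this contraction detects $o_n$ faithfully.

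Finally, the hypothesis $\Phi^{\widetilde H}(\Fil^2_X) \subset \Fil^2_Y$ unwinds via Berthelot--Ogus to the statement that $v(P)$ lies in $\Fil^2$ of the Mukai--de Rham cohomology of $X_W \times_W Y_W$ (i.e., $(X_W, Y_W)$ are $P$-compatible in the sense defined earlier); Griffiths transversality then annihilates the contraction with $\kappa_n$ and thus $o_n$, closing the induction. The main obstacle is the preceding paragraph: precisely matching the derived-categorical obstruction $o_n$ with the Griffiths-type pairing in cohomology, using the fully faithful hypothesis to cut $\Ext^2(P,P)$ down to a piece where semi-regularity is injective. An alternative, more indirect route is to use Proposition \ref{P:space isom} to show that the Hodge-filtration hypothesis forces the canonical companion deformation $\delta(X_W)$ to coincide with $Y_W$, at which point Proposition \ref{P:untwist} supplies $P_W$ directly --- but this shifts the essential work into comparing $\delta(X_W)$ with $Y_W$ via their Hodge data, which is essentially the same Hodge-theoretic input repackaged.
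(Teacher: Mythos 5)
There is a genuine gap, and it sits exactly where you flag it: the step identifying the obstruction $o_n\in\Ext^2_{X\times Y}(P,P)\otimes(p^{n+1}W/p^{n+2}W)$ with a cohomological contraction that is \emph{faithfully} detected after applying a Chern-character/semiregularity map. Proposition \ref{P:3.5} only computes the fiberwise groups $\Ext^*(P_x,P_{x'})$; it says nothing about $\Ext^2_{X\times Y}(P,P)$, which for the kernel of an equivalence between K3 surfaces is $22$-dimensional (it is Hochschild cohomology $\HH^2$ transported through $\Phi$), and there is no evident mechanism by which full faithfulness ``cuts it down to a piece where semiregularity is injective.'' In characteristic $0$ the argument you are sketching is carried out via the HKR decomposition and a careful matching of Kodaira--Spencer classes (Huybrechts--Macr\`i--Stellari, Buchweitz--Flenner), but those tools are not available here: the theorem is asserted for every $p\neq 2$, the product has dimension $4$, and no crystalline semiregularity formalism strong enough to kill $o_n$ is established in this paper or its references. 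Indeed the paper's logical order is the reverse of yours: the variational crystalline Hodge statement (Theorem \ref{T:chicken}, that filtration-compatibility of $\chern(P)\sqrt{\Td_{X\times Y}}$ forces the cycle to lift) is deduced \emph{from} Theorem \ref{T:kernel lift}, so invoking Griffiths transversality plus a crystalline semiregularity map to prove the lifting is, as written, assuming the machinery the theorem is meant to produce.

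The paper's proof avoids all obstruction calculus for $P$. After a preliminary reduction to $\Phi(0,0,1)=(0,0,1)$ (composing with a relative moduli space of sheaves on $Y_W$, which exists because the filtration hypothesis makes the relevant line bundles on $Y$ unobstructed), Proposition \ref{P:open in mother} realizes $X$ as an open subspace of $\sMom_{Y_W}$. That moduli space is smooth over $W$ near the image of $X$ (Mukai's trace-map argument for simple complexes on a K3), so one gets \emph{for free} a canonical formal lift $\mf X$ of $X$ over $W$ together with a lift of the kernel up to a twist by a line bundle pulled back from $X$; an explicit determinant computation (as in Proposition \ref{P:untwist}) removes the twist, using again the filtration hypothesis to see that the relevant class $b\in\Pic(X)$ lifts. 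The hypothesis $\Phi^{\widetilde H}(\Fil^2_X)\subset\Fil^2_Y$ is then used exactly once more, through the Berthelot--Ogus comparison, to identify the isotropic line cutting out $\mf X$ with $\Fil^2_X$, i.e.\ to conclude $\mf X=\widehat X_W$, after which Grothendieck existence for perfect complexes algebraizes the formal kernel. So your ``alternative, more indirect route'' via Propositions \ref{P:space isom} and \ref{P:untwist} is in fact essentially the paper's argument, and the ``essential work'' it requires --- comparing a canonical moduli-theoretic lift with $X_W$ via Hodge/crystalline data --- is much softer than the semiregularity input your main route would need; I would encourage you to develop that version rather than the obstruction-class computation.
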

\begin{proof}
We claim that it suffices to prove the result under the assumption that $\Phi(0,0,1)=(0,0,1)$.  Indeed, fix a $W$-ample divisor $\beta$ on $Y_W$.  Suppose
$$\Phi(0,0,1)=(r,\ell,s)$$
with $r>0$.  Since $\Phi$ preserves the Hodge filtration we see that $\ell\in\Fil^1_Y\H^2(Y/K)$, whence $\ell$ is unobstructed on $Y$  \cite[1.12]{Ogus2}.  Similarly, $$\Phi(1,0,0)=(r',\ell',s')$$ such that
$$\ell\cdot\ell'-rs'-r's=1,$$
and $\ell'$ must also lie in $\Fil^1_Y\H^2_{\text{cris}}(Y/K)$, so that $\ell'$ lifts over $Y_W$. Thus, the moduli space $\M_Y(r,\ell,s)$ lifts to a relative moduli space $M_{Y_W}(r,\ell,s)$, and there is a tautological sheaf $\ms E_W$ on $M_{Y_W}(r,\ell,s)\times_W Y$ defining a relative FM equivalence.  This induces an isometry of $F$-isocrystals $$\Phi_{\ms E}:\widetilde H(M_Y(r,\ell,s)/K)\simto\widetilde H(Y/K)$$ that sends $(0,0,1)$ to $(r,\ell,s)$.  The composition yields a FM equivalence
$$\Phi_Q:\D(X)\to\D(M_Y(v))$$
sending $(0,0,1)$ to $(0,0,1)$ and preserving the Hodge filtrations on Mukai isocrystals.  In addition, since $\ms E$ lifts to $\ms E_W$, we see that $P$ lifts if and only if $Q$ lifts.  Thus, we may assume that $\Phi(0,0,1)=(0,0,1)$, as claimed.

Since $\Phi$ is an isometry, it follows that $$\Phi^{-1}(1,0,0)=\left(1,b,\frac{1}{2}b^2\right)$$
for some $b\in\Pic(X)$. By Corollary  \ref{P:open in mother}, the kernel $P$ corresponds to a morphism
 $$\mu_P:X\to\Mom_Y$$
whose image in $\sMom_Y$ is an open immersion.  More concretely, if $\ms P$ denotes the universal complex on $\Mom_Y\times Y$, we have that
$$P=\L(\mu_P\times\id)^\ast\ms P.$$
Write $$\ms M=X\times_{\sMom_Y}\Mom_Y,$$ so that $\mu_P$ defines morphisms
$$X\to\ms M\to X$$ making $X$ a $\G_m$-torsor over $\ms M$.  The associated invertible sheaf is $\ms M$-twisted.

Since $\Mom_Y$ is smooth over $W$, there is a canonical formal lift $\mf X$ of $X$ over $W$, with a corresponding formal gerbe $\ms G\to\mf X$ lifting $\ms M$ such that there is a perfect complex of coherent twisted sheaves $\mf P\in\D(\ms G\times \widehat Y_W)$ lifting $\ms P|_{\ms M}$.  (Indeed, $\mf X$ is just the open subspace of the formal completion $\widehat\sMom_Y$ supported on $\mu(X)$.)

The complex $\R(\pr_1)_\ast\ms P$ is an invertible $\ms G$-twisted sheaf, defining an equivalence
$$\D(\mf X)\simto\D^{\twist}(\ms G).$$
Let $$\mf Q\in\D(\mf X\times\widehat Y_W)$$ be the kernel giving the composition
$$\D(\mf X)\simto\D^{\twist}(\ms G)\simto\D(\widehat{Y}_W).$$
Since the class of $\R(\pr_1)_\ast\ms P$ might differ from the twisted invertible sheaf associated to $X\to\ms M$, we have that the restriction
$$Q\in\D(X\times Y)$$
differs from $P$ by tensoring with an invertible sheaf $\ms L$ pulled back from $X$.  One can check that $\Phi_Q(1,0,1)=(1,0,1)$.  Since $b$ is the unique invertible sheaf $\ms L$ on $X$ such that tensoring with $\ms L$ sends
$$\left(1,b,\frac{1}{2}b^2\right)\text{ to }(1,0,0),$$ we see that $$Q\cong P\tensor\pr_1^\ast \ms O_X(b).$$

  We have that $v(b)=\Phi_P^{-1}(v(\ms O_Y))$ and $\Phi$ respects the Hodge filtrations on the Mukai isocrystals; since $\ms O_Y$ is unobstructed on $Y_W$, we therefore have that $$b\in\Fil^1_X\H^2_{\textrm{cris}}(X/K),$$
whence $b$ is unobstructed on $X_W$.
The complex $$\widehat P_W:=\mf Q\tensor\pr_1^\ast\ms O_{\mf X}(-b)\in\D(\mf X\times\widehat Y_W)$$ gives a formal lift of $P$.

Finally, by construction the isotropic subspace $$F\subset\H^2(X/K)$$ parametrizing the formal lift $\mf X$ is $\Phi^{-1}(\Fil^2\H^2_Y/K))$.  Since
$$\Phi^{-1}(\Fil^2\H^2_Y/K))=\Fil_X^2\H^2(X/K),$$
we conclude that $$\mf X=\widehat X_W.$$  Applying the Grothendieck Existence Theorem for perfect complexes as in \cite{L}, we get the desired lift $P_W\in\D(X_W\times_W Y_W)$.
\end{proof}

\begin{remark}
 If we had an integral version of the Mukai isocrystal and an integral version of our results then we could produce the lift $X_W$ from $Y_W$ via $\Mom_{Y_W}$.  Unfortunately, the Tate twist involved in the formation of $\widetilde H(Y/K)$ precludes a na\"ive extension to integral coefficients.
\end{remark}

\begin{remark}
Taking the cycle $Z:=\chern(P)\sqrt{\Td_{X\times Y}}$ giving the action on cohomology, we can see that Theorem \ref{T:kernel lift} gives a special case of the variational crystalline Hodge conjecture (see e.g.\ \cite[Conjecture 9.2]{MP}): the fact that $\Phi_P$ preserves the Hodge filtrations on the Mukai isocrystals means that $$[Z]\in\Fil^2\H^4(X\times Y/K).$$
Lifting the kernel $P$ to $P_W$ lifts the cycle, confirming the conjecture in this case. This could be interpreted as a kind of (weak) ``variational crystalline version'' of Mukai's original results on the Mukai-Hodge structure \cite{M}.
\end{remark}

\section{Every FM partner is a moduli space of sheaves}
\label{S:lift}

In this section we prove statement (1) in Theorem \ref{T:mainthm}

Fix \text{\rm K3}  surfaces $X$ and $Y$ over an algebraically closed field of characteristic exponent $p$.  Suppose $P$ is the kernel of a Fourier-Mukai equivalence $\D(Y)\to\D(X)$.  We now show that $Y$ is isomorphic to a moduli space of sheaves on $X$.

  Let $v=(r,L_X,s)=\Phi(0,0,1)$ be the Mukai vector of a fiber $P_y$ (hence all fibers).

\begin{lem}\label{L:change-rank}
 We may assume that $r$ is positive and prime to $p$ and that $L_X$ is very ample.
\end{lem}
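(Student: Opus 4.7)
The strategy is to modify the given kernel $P$ by composing the associated Fourier--Mukai transform with autoequivalences of $\D(X)$, so as to normalize the Mukai vector $v=(r,L_X,s)=\Phi(0,0,1)$ of the fiber $P_y$.  Three classes of autoequivalences have explicit action on Mukai vectors in $\Chow(X)$: the shift $[1]$ (negation $v\mapsto -v$); tensoring with a line bundle $M\in\Pic(X)$, which by \ref{E:Lformula} acts as $(r,L,s)\mapsto (r,L+rM,s+L\cdot M+rM^2/2)$; and the spherical twist in $\ms O_X$, which is spherical since $X$ is K3 and whose induced reflection in $v(\ms O_X)=(1,0,1)$ sends $(r,L,s)\mapsto(-s,L,-r)$ (Proposition~\ref{P:spherical reflection}).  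Throughout, I use that $v$ is primitive and isotropic (so $L_X^2=2rs$), both inherited from $(0,0,1)$ via the isometry $\Phi$.

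\emph{Arranging $r>0$.}  If $r\neq 0$, apply $[1]$ if needed to get $r>0$.  If $r=0$ and $s\neq 0$, the spherical twist in $\ms O_X$ produces rank $-s\neq 0$.  The remaining case $r=s=0$ forces $L_X$ to be nonzero and isotropic, hence (after possibly a shift, which negates $L_X$) in the closure of the positive cone of $\NS(X)\tensor\R$, so $L_X\cdot H>0$ for any ample $H$; tensoring with such $H$ yields $s'=L_X\cdot H>0$, reducing to the previous case.

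\emph{Arranging $\gcd(r,p)=1$.}  Assume $r>0$ and $p\mid r$.  Primitivity of $v$ forbids $p\mid s$ and $L_X$ being divisible by $p$ in $\NS(X)$ from holding simultaneously.  If $p\nmid s$, the spherical twist in $\ms O_X$ produces rank $-s$, coprime to $p$.  Otherwise $L_X$ is nonzero in $\NS(X)/p$, and I seek an ample $H\in\Pic(X)$ with $L_X\cdot H\not\equiv 0\pmod{p}$; granting such an $H$, tensoring with $H$ and then applying the spherical twist yields new rank $-(s+L_X\cdot H+rH^2/2)\equiv -L_X\cdot H\not\equiv 0\pmod p$, and a final shift restores positivity.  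Existence of $H$ in the generic case follows from the fact that every residue class in $\NS(X)/p$ is represented by an ample class (add a large positive multiple of $p$ times a fixed very ample class to any representative).

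\emph{Arranging $L_X$ very ample, and main obstacle.}  With $r>0$ coprime to $p$, tensoring with $mA$ for an ample $A$ and large $m$ sends $L_X$ to $L_X+rmA$, which lies deep in the ample cone and is eventually very ample.  The delicate point is the middle step: the intersection pairing on $\NS(X)$ may degenerate modulo $p$, and $L_X$ could land in its radical, in which case no ample $H$ yields $L_X\cdot H\not\equiv 0\pmod p$.  In such truly degenerate situations one must invoke additional spherical objects, for instance line bundles $\ms O(D)$ with $D^2=-2$, whose associated reflection is in $v=(1,D,0)$ and does act nontrivially on the rank coordinate, or structure sheaves of $(-2)$-curves, to break out of the radical and complete the normalization.
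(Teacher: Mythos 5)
Your overall strategy---normalizing $v=\Phi(0,0,1)$ by composing with shifts, twists by line bundles, and spherical twists---is the same as the paper's, and your steps arranging $r>0$ and $L_X$ very ample are fine. The genuine gap is in the step making $r$ prime to $p$, and it is exactly the one you flag at the end: when $p\mid r$ and $p\mid s$ you need some class $M\in\NS(X)$ with $L_X\cdot M\not\equiv 0\pmod p$, and primitivity of $v$ alone does not rule out $L_X$ lying in the radical of the mod-$p$ intersection form (on a Shioda-supersingular surface that radical has dimension $2\sigma_0\geq 2$, so the case is not vacuous a priori). Your proposed rescue by ``additional spherical objects'' is not an argument, and in fact the objects you name do not help: the reflection in $v(\ms O_X(D))=(1,D,1+D^2/2)$ changes the rank of $(r,L_X,s)$ by $\langle (r,L_X,s),(1,D,1+D^2/2)\rangle\equiv L_X\cdot D\pmod p$, which vanishes for every $D$ if $L_X$ is in the radical, while the reflection in $(0,C,1)$ attached to a $(-2)$-curve adds a multiple of $(0,C,1)$ and so never changes the rank. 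One checks that the set of vectors with $r\equiv s\equiv 0\pmod p$ and $L_X$ in the radical is stable under every move in your toolkit (shift, tensoring, these reflections), so no composition of them could close the gap.

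The missing idea is to use more than ``$v$ is primitive and isotropic'': $\Phi$ is an isometry of the whole numerical Mukai lattice, and $\langle (0,0,1),(-1,0,0)\rangle=1$, so the vector $(r',\ell,s'):=\Phi(-1,0,0)$ satisfies $\ell\cdot L_X-rs'-r's=1$. Hence if $p\mid r$ and $p\mid s$ then $\ell\cdot L_X\equiv 1\pmod p$; in particular the radical case cannot occur, and the class $\ell$ itself does the job---ampleness of the twisting class is irrelevant, since tensoring by an arbitrary line bundle is an autoequivalence (your insistence on an ample $H$ is what created the apparent need for every residue class to contain an ample representative). This is precisely the paper's proof: tensor by $\ell^{\otimes n}$, so the last component of the Mukai vector becomes $s+n\,\ell\cdot L_X+rn^2\ell^2/2\equiv n\pmod p$, then compose with the transform whose kernel is the shifted ideal of the diagonal (the spherical twist by $\ms O_X$) and a shift to exchange this component with the rank. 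With that replacement the rest of your argument goes through.
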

\begin{proof}
First, if either $r$ or $s$ is not divisible by $p$ then we get the lemma by possibly composing with a shift and the spherical twist corresponding to $\mls O_X$, which up to sign interchanges $r$ and $s$ (see \cite[Example 10.9 (ii)]{H}).   So we will assume that both $r$ and $s$ are divisible by $p$ and show that we can compose with an autoequivalence of $X$ to ensure that $r$ is not divisible by $p$.

 Since $\Phi$ induces an isometry of numerical Chow groups, we have that there is some other Mukai vector $(r',\ell,s')$ such that $$(r,L_X,s)(r',\ell,s')=\ell\cdot L_X-rs'-r's=1.$$  Thus, since both $r$ and $s$ are divisible by $p$ we have that $\ell\cdot L_X$ is prime to $p$.  Consider the equivalence $D(X)\to D(X)$ given by tensoring with $\ell^{\tensor n}$ for an integer $n$.  This sends the Mukai vector
 $(r,L_X,s)$ to
 $$(r,L_X+rn\ell,s+n\ell\cdot L_X+\frac{n^2}{2}\ell^2r).$$
 It is elementary that for some $n$ the last component will be non-zero modulo $p$.  After composing with the spherical twist associated to $\mls O_X$ and shifting the complex we can swap the first and last components and thus find that $r$ is not divisible by $p$, as desired.

 Changing the sign of $r$ is accomplished by composing with a shift.  Making $L_X$ very ample is accomplished by composing with an appropriate twist functor.
\end{proof}

As discussed in section \ref{modulidef}, we can consider the moduli space $M_X(v)$  of sheaves on $X$ with Mukai vector $v$ (with respect to the polarization $L_X$ of $X$), and this is again a \text{\rm K3}  surface which is a FM partner of $X$.

\begin{prop}\label{P:everyone is moduli}
 With the notation from the beginning of this section, there is an isomorphism $Y\simto M_X(v)$.
\end{prop}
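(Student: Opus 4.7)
The plan is to realize $Y$ and $M_X(v)$ as two K3 surfaces joined by a filtered Fourier--Mukai equivalence, and then to invoke the derived Torelli theorem (Theorem \ref{T:derived torelli}).

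First I would verify that the Mukai vector $v$ meets the hypotheses of Section \ref{modulidef}. Since $\Phi_P$ is an equivalence, $\Phi_P^{\Chow}$ is an isometry of Mukai lattices, and the identity $\Phi_P^{\Chow}(v) = (0,0,1)_Y$ forces $v$ to be primitive with $v^2 = 0$; moreover, setting $v' := (\Phi_P^{\Chow})^{-1}((-1,0,0)_Y)$ gives $\langle v, v'\rangle = \langle (0,0,1),(-1,0,0)\rangle = 1$. Combined with $r > 0$, $\gcd(r,p) = 1$, and the very ampleness of $L_X$ supplied by Lemma \ref{L:change-rank}, the results of Section \ref{modulidef} imply that $M_X(v)$ is a K3 surface carrying a genuine (untwisted) tautological family $\ms E$ on $X \times M_X(v)$, whose associated transform $\Phi_{\ms E} : \D(M_X(v)) \to \D(X)$ is a Fourier--Mukai equivalence.

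Next I would form the composite equivalence $\Psi := \Phi_P \circ \Phi_{\ms E} : \D(M_X(v)) \to \D(Y)$ and verify that it is filtered in the sense of Paragraph \ref{Para:filt}. For K3 surfaces this reduces to the single condition $\Psi^{\Chow}((0,0,1)_{M_X(v)}) \in \Z \cdot (0,0,1)_Y = F_Y^2$, and the chase is immediate: $\Phi_{\ms E}^{\Chow}((0,0,1)_{M_X(v)})$ is the Mukai vector on $X$ of a tautological fiber, hence equals $v$, while $\Phi_P^{\Chow}(v) = (0,0,1)_Y$ by construction. Hence $\Psi$ is filtered, and Theorem \ref{T:derived torelli} yields the desired isomorphism $Y \simeq M_X(v)$.

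The main point requiring care is not the Chow-level filteredness check, which is essentially tautological, but the conclusion that $\Phi_{\ms E}$ is a \emph{genuine} (rather than merely twisted) Fourier--Mukai equivalence. This is precisely what the complementary vector $v'$, together with the positive-characteristic-friendly hypotheses on $r$ and $L_X$ from Lemma \ref{L:change-rank}, guarantees via the moduli-theoretic results recalled in Section \ref{modulidef}; once in place, the rest of the argument is a formal Chow computation followed by one invocation of derived Torelli.
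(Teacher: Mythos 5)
Your proof is correct and follows essentially the same route as the paper: compose the given equivalence with the tautological equivalence coming from the universal family on $X\times M_X(v)$, observe that the composite sends the point class to the point class and is therefore filtered, and invoke Theorem \ref{T:derived torelli}. The extra verification you give (primitivity of $v$, $v^2=0$, the complementary vector $v'$, and the role of Lemma \ref{L:change-rank} in guaranteeing an untwisted tautological family) is exactly the content the paper leaves implicit by citing Section \ref{modulidef}, so your write-up just makes those standard checks explicit.
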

\begin{proof}
 Consider the composition of the equivalences $\D(Y)\to\D(X)\to\D(M_X(v))$ induced by the original equivalence and the equivalence defined by the universal bundle on $X\times M_X(v)$.   By assumption we have that $\D(Y)\to\D(M_X(v))$ sends $(0,0,1)$ to $(0,0,1)$, so it is filtered.  Theorem \ref{T:derived torelli} implies that $Y\cong M_X(v)$, as desired.
\end{proof}

This completes the proof of statement (1) in Theorem \ref{T:mainthm}.  For later use we conclude this section with some observations about the choice of $v$ for which $Y\simto M_X(v)$, which also makes use of the fact that $r$ in the above is prime to $p$.  Suppose that the equivalence $\Phi :D(Y)\rightarrow D(X)$ is given by $P$ on $Y\times X$ for which the Mukai vector $v = (r, L_X, s)$ of a fiber $P_y$ has the additional property that $r$ is positive and prime to $p$, and that $L_X$ is ample (as can be arranged by \ref{L:change-rank}).  If $M$ is any line bundle, then the composition of $\Phi $ with $\otimes M:D(X)\rightarrow D(X)$ is given by a complex on $Y\times X$ with Mukai vector $w$ given by (see \ref{E:Lformula})
\begin{equation}\label{E:changeit}
w:= (r, L_X+rc_1(M), s+L_X\cdot c_1(M)+rc_1(M)^2/2).
\end{equation}
From the proof of \ref{P:everyone is moduli} it follows that $Y\simeq M_X(w)$ for any such $w$ for which $L_X+rc_1(M)$ is  ample.

\begin{lem}\label{L:weirdness}
For any subgroup $\Gamma\subset\NS(X)$ of non-maximal rank, there exists a primitive Mukai vector $v=(r, \ell, s)$ such that the following hold:
\begin{enumerate}
\item $Y\simeq \M_X(v)$.
\item The map
$v\cdot(\ast):A^*(X)_{\text{\rm num}}\to\Z $
is surjective;
\item $\ell $ is an ample class.
\item  $\ell\not\in p\NS(X)+\Gamma$.
\end{enumerate}
\end{lem}
\begin{proof}
Start with any primitive Mukai vector $v = (r, \ell, s)$ so that (1) and (2) hold.  Now observe that conditions (1) and (2) are preserved if we replace $v$ by $w$ as in \ref{E:changeit}.

To prove the lemma, it therefore suffices to show that we can add a suitable ample class to $\ell $ to ensure conditions (3) and (4).  This is immediate since every element of
$$
\NS (X)/(p\NS (X)+\Gamma )
$$
can be represented by an ample class.
\end{proof}

\section{Finiteness results}
\label{S:finiteness}

Fix a \text{\rm K3}  surface $X$ over the algebraically closed field $k$.    In this section we prove statements (2) and (3) in \ref{T:mainthm}.

First consider the case when the Picard number of $X$ is $\leq 4$, in which case we need to show that $X$ has finitely many FM-partners.

If $X$ has finite height then by \cite[4.2]{LM} there is a lift $X_W$ of $X$ over $W$ such that the restriction map $\Pic(X_W)\to\Pic(X)$ is an isomorphism.  Since every partner of $X$ has the form $M_X(v)$ for some Mukai vector $v$, we see that any partner of $X$ is the specialization of a partner of the geometric generic fiber.  But the generic fiber has characteristic $0$, whence it has only finitely many FM partners by the Lefschetz principle and the known result over $\C$ (see \cite{BrM}).  Since specializations of \text{\rm K3}  surfaces are unique by Lemma \ref{L:unique spec}, we see that $X$ has only finitely many partners.

If $X$ is supersingular \footnote{Granting the recent results showing that any  supersingular K3 surface is Shioda supersingular \cite{Ch, LM, P} (recall that we are assuming $p>2$), this case never occurs.} then there is a flat deformation $X_t$ of $X$ over $\spec k\[t\]$ such that the generic fiber has finite height and the restriction map $\Pic(X_t)\to\Pic(X)$ is an isomorphism.  Indeed, choosing generators $g_1,\ldots,g_n$ for $\Pic(X)$, each $g_i$ defines a Cartier divisor $G_i$ in $\Def_X$.  Moreover, the supersingular locus of $\Def_X$ has dimension $9$ by  \cite[Proposition 14]{Og99}.  Thus, a generic point of the intersection of the $G_i$ lies outside the supersingular locus, and we are done since we can dominate any local ring by $k\[t\]$.  The argument of the previous paragraph then implies the result for $X$.

Next we consider the case when 
 $X$ has Picard number $\geq 5$.  In this case, as explained at the beginning of \cite[Section 3]{LM}, $X$ is Shioda-supersingular and has Picard number $22$.  In this case we will prove that in fact $X$ has a unique FM partner (namely itself).

This we again do by lifting to characteristic $0$.  The key result is \cite[Corollary 2.7 (2)]{HLOY}, which implies  that if $K$ is an algebraically closed field of characteristic $0$ and if $Z/K$ is a \text{\rm K3}  surface with Picard number $\geq 3$ and squarefree discriminant, then any FM partner of $Z$ is isomorphic to $Z$.

We use this by showing that if $X/k$ is Shioda-supersingular and $Y$ is a FM partner of $X$, then we can lift the pair $(X, Y)$ to a FM pair $(\mc X, \mc Y)$ over the ring of Witt vectors $W(k)$ such that the Picard lattice of the geometric generic fiber of $\mc X$ has rank $\geq 3$ and square-free discriminant.  Then by the result of \cite{HLOY} just mentioned, we conclude that the geometric generic fibers of $\mc X$ and $\mc Y$ are isomorphic whence $X$ and $Y$ are isomorphic.

So fix such a pair $(X, Y)$, and let's construct the desired lifting $(\mc X, \mc Y)$.
 
Let $N$ be the Picard lattice of $X$.  By our assumption that $X$ is supersingular, $N$ has the following properties (see for example \cite[1.7]{Ogus}):
\begin{enumerate}
\item $N$ has rank $22$.
\item Let $N^\vee $ denote the dual of $N$, and let $N\hookrightarrow N^\vee $ be the inclusion defined by the nondegenerate pairing on $N$.  Then the quotient $N^\vee /N$ is annihilated by $p$ and has dimension as a $\mathbb{F}_p$-vector space $2\sigma _0$ for some integer $\sigma _0$ between $1$ and $10$ ($\sigma _0$ is the \emph{Artin invariant}).
\end{enumerate}

Let $F\subset N$ be a rank $2$ sublattice with $N/F$ torsion free.  Applying lemma \ref{L:weirdness} with $\Gamma = F$, we can assume that $Y = M_X(v)$ with $v = (r, \ell , s)$ for $\ell \in N$ with nonzero image in $N/(pN+F)$.
Let  $E$ be the saturation of $F+\Z\ell $ in $N$.  By construction, the map $F/pF\to F^\vee/pF^\vee=(F/pF)^\vee$ is an isomorphism.

 There is a natural diagram

$$\xymatrix{E\ar[r]\ar[d] & N\ar[d]\\
E^\vee\ar[d] & N^\vee\ar[l]\ar[d]\\
E^\vee/E & N^\vee/N\ar[l]}$$
in which all four arrows in the bottom square are surjective.   In particular, $E^\vee /E$ is an $\mathbb{F}_p$-vector space.  Also if $Q$ denotes the quotient $E/F$, then $Q$ has rank $1$ and we have an exact sequence
$$
0\rightarrow Q^\vee /(Q^\vee \cap E)\rightarrow E^\vee /E\rightarrow F^\vee /(\text{Im}(E\rightarrow F^\vee ))\rightarrow 0.
$$
Since the quotient $F^\vee/F$ is already $0$, this shows that $E^\vee /E$
 is isomorphic to $0$ or $\Z/p\Z$.  In particular, $E$ has rank $3$ and square-free discriminant.

As explained in the appendix (in particular \ref{P:A.1}),  there is a codimension at most $3$ formal closed subscheme of the universal deformation space $D:=\spf W\[t_1,\ldots,t_{20}\]$ of $X$ over which $E$ deforms.  The universal deformation is algebraizable (as $E$ contains an ample class) and a geometric generic fiber is a \text{\rm K3}  surface over an algebraically closed field of characteristic $0$ with Picard lattice isomorphic to $E$.  Let
$$\mc X\to\spec R$$
be a relative \text{\rm K3}  surface with special fiber $X$ and geometric generic Picard lattice $E$.  Write $$\ms M\to\spec R$$ for the stack of sheaves with Mukai vector $(r,\ell,s)$ stable with respect to a sufficiently general polarization.  We know that $\ms M$ is a $\m_r$-gerbe over a relative $K3$ surface
$$\mc M\to\spec R,$$ and
by assumption we have that the closed fiber of $\ms M$ is ismorphic to $Y\times\B\m_r$.  Since $r$ is relatively prime to $p$, we have that the Brauer class associated to the gerbe $\ms M\to\mc M$ is trivial.  In particular, there is an invertible $\ms M$-twisted  sheaf $\ms L$ on $\ms M$ (see \cite{L2} for basic results on twisted sheaves).

Now let $\ms V$ be the universal twisted sheaf on $\ms M\times_R\mc X$ and $V$ the tautological sheaf on $Y\times X$.  Write
$$\pi:\ms M\times\mc X\to\mc M\times\mc X$$ for the natural projection and let
$$\ms W:=\pi_\ast\left(\ms V\tensor\ms L^\vee\right).$$  There is an invertible sheaf $\ms N$ on $Y$ such that
$$\ms W|_{Y\times X}\cong V\tensor\pr_1^\ast\ms N,$$
the kernel of another equivalence between $\D(X)$ and $\D(Y)$.
It follows from the adjunction argument in the proof of Proposition \ref{P:space isom} that $\ms W$ also gives a Fourier-Mukai equivalence between the geometric generic fibers of $\mc M$ and $\mc X$ over $R$.  By \cite{HLOY}, we have that $\mc M_{\widebar\eta}$ and $\mc X_{\widebar\eta}$ are isomorphic.  By specialization (using Lemma \ref{L:unique spec}), we see that $Y\cong X$, as desired.

This completes the proof of \ref{T:mainthm}. \qed

%I HAVEN'T CHECKED ANY OF THE GALOIS ETALE FROBENIUS THINGS THAT CAREFLLY

%WE COULD ALSO JUST USE CRYSTALLINE COHOMOLOGY.  THE ZETA FUNCTION CAN ALSO BE RECOVERED FROM THE F-ISOCRYSTAL.

\appendix

\section{Deformations of \text{\rm K3} surfaces with families of invertible sheaves}

In this appendix, we prove the following result.

\begin{prop}\label{P:A.1}
	\label{P:sticky-Pic}
	Let $X$ be a  \text{\rm K3}  surface over an algebraically closed field $k$ of characteristic $\neq 2$,  and let $E\subset\Pic(X)$ be a saturated subgroup containing an ample divisor.  Suppose one of the following conditions hold:
\begin{enumerate}
\item [(i)] $k$ has characteristic $0$.
\item [(ii)] $X$ has finite height.
\item [(iii)] $X$ is supersingular, the characteristic of $k$ is $\neq 2$, and the rank of $E$ is $\leq 10$.
\end{enumerate}
Then there exists
	\begin{itemize}
		\item a complete dvr $R$ with fraction field $K$ of characteristic $0$ and residue field $\kappa$;
		\item a relative \text{\rm K3}  surface $\mc X\to\spec R$;
		\item an inclusion $k\to\kappa$ and an isomorphism $X\tensor_k\kappa\simto\mc X_\kappa.$
		\item an isomorphism $E\to\Pic(\mc X)$ specializing to the given inclusion $E\subset \Pic(X)$ whose restriction $E\to\Pic(\mc X_{\widebar K})$ is an isomorphism for any choice of algebraic closure $K\subset\widebar K$; 
	\end{itemize}
In other words, there is a lift of $X$ to characteristic $0$ whose geometric generic fiber has Picard group exactly $E$.
\end{prop}

Cases (i) and (ii) are already shown in \cite[4.2]{LM}, so the remainder of the appendix is devoted to the supersingular case (iii).

Let $(L_1,\ldots,L_m)$ be a  basis for $E$ with $L_1$ ample (which we may assume without loss of generality). We can define a moduli space over $W$ that contains $(X,L_1,\ldots,L_m)$ as a $k$-point: 

\begin{defn}
	\label{D:marked-moduli}
	Let $\ms M\to\spec W$ be the stack whose objects over a $W$-scheme $S$ are tuples $(\mc X,\ms L_1,\ldots,\ms L_m)$, where $\pi:\mc X\to S$ is a relative \text{\rm K3}  surface and $\mls L_1,\ldots, \mls L_m$ are invertible sheaves such that $\ms L_1$ is $\pi$-ample.
\end{defn}

\begin{lem}
	\label{L:algebraicity-with-bundles}
	The stack $\ms M$ is an Artin stack locally of finite type over $W$.
\end{lem}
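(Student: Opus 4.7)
The plan is to build $\ms M$ from the known algebraic stack of polarized K3 surfaces by successively adjoining line bundle data.

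First, let $\ms M_1^{\circ} \to \spec W$ denote the stack of pairs $(\mc X \to S, \ms L_1)$ in which $\mc X$ is a relative K3 surface and $\ms L_1$ is relatively ample. Decomposing by the self-intersection number of $\ms L_1$ expresses $\ms M_1^\circ$ as a countable disjoint union of components of bounded polarization degree, each of which is an Artin (in fact Deligne--Mumford) stack of finite type over $W$ by the classical theory of moduli of polarized K3 surfaces. Hence $\ms M_1^\circ$ is an Artin stack locally of finite type over $W$.

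Second, let $\pi: \mc X_1 \to \ms M_1^\circ$ be the universal polarized family and consider the relative Picard stack $\underline{\Pic}_{\mc X_1 / \ms M_1^\circ}$. Because $\pi$ is smooth and proper with K3 fibers --- so $H^1(\ms O_{\mc X_s}) = 0$ and $\pi_* \ms O_{\mc X_1} = \ms O_{\ms M_1^\circ}$ --- a standard application of Artin's representability theorem exhibits $\underline{\Pic}_{\mc X_1/\ms M_1^\circ}$ as a $\G_m$-gerbe over an algebraic space locally of finite type that is moreover formally étale over $\ms M_1^\circ$. In particular, $\underline{\Pic}_{\mc X_1/\ms M_1^\circ}$ is an Artin stack locally of finite type over $W$.

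Third, unwinding the definition identifies $\ms M$ with the $(m-1)$-fold fibered product
$$\underline{\Pic}_{\mc X_1 / \ms M_1^\circ} \times_{\ms M_1^\circ} \cdots \times_{\ms M_1^\circ} \underline{\Pic}_{\mc X_1/\ms M_1^\circ},$$
where for each $i \geq 2$ the datum of $\ms L_i$ corresponds to a section of the $i$-th copy of the Picard stack, and $\ms L_1$ is provided by the tautological ample bundle on $\ms M_1^\circ$; the $\G_m$-automorphisms of each $\ms L_i$ (including $\ms L_1$) match on both sides. Since fibered products of Artin stacks locally of finite type over $W$ are again of this form, the lemma follows.

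The main obstacle is invoking the two standard algebraicity inputs --- representability of the polarized K3 moduli problem and of the Picard stack of a smooth proper family with $h^{0,1}=0$ --- each of which demands some care with Artin's criteria but requires no ideas specific to this situation.
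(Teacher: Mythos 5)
Your argument is essentially the paper's own proof: establish algebraicity of the stack of polarized K3 surfaces, invoke algebraicity of the relative Picard stack of the universal family, and identify $\ms M$ with the $(m-1)$-fold fiber product of Picard stacks over the polarized moduli stack. One aside is inaccurate but harmless: since $H^2(X,\ms O_X)\neq 0$, line bundles on K3 surfaces deform with obstruction, so the Picard stack is formally unramified but not formally \'etale over the polarized moduli stack; as only its algebraicity and local finite presentation are used, the proof stands as written.
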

\begin{proof}[Sketch of proof]
	Let $\ms N$ be the stack of polarized \text{\rm K3} surfaces, whose objects over $S$ are pairs $(\mc X,\ms L_1)$ with $\ms L_1$ an ample invertible sheaf. We know that $\ms N$ has a smooth covering by open subschemes of Hilbert schemes, hence is an Artin stack locally of finite type over $W$.  Writing $\ms X\to\ms N$ for the universal relative \text{\rm K3}  surface, the forgetful morphism $\ms M\to\ms N$ is given by the $m-1$-fold fiber product $$\ms Pic_{\ms X/\ms N}\times_{\ms N}\cdots\times_{\ms N}\ms Pic_{\ms X/\ms N}.$$ Since the Picard stack of a relative \text{\rm K3}  surface is an Artin stack locally of finite type, we conclude the same for $\ms M$.
\end{proof}
The surface $X$ together with the generators $(L_1,\ldots,L_m)$ yield an object of $\ms M_k$.

\begin{lem}
	\label{L:old-time}
	To prove Proposition \ref{P:sticky-Pic}, it suffices to show the existence of a chain of objects $\xi_0,\ldots,\xi_N$ of $\ms M$ such that $\xi_0=(X,L_1,\ldots,L_m)$, each $\xi_i$ is a specialization of $\xi_{i+1}$ along some local ring, and $\xi_N$ has Picard group $E$.
\end{lem}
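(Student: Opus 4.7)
The strategy is to extract from the chain a single morphism $\spec R\to\ms M$, with $R$ a complete DVR of mixed characteristic, whose closed fiber realizes $\xi_0$ and whose geometric generic fiber has Picard group exactly $E$. Throughout I assume (as the explicit construction of the chain in the sequel will have to guarantee) that $\xi_N$ lies over the generic point of $\spec W$, since Proposition \ref{P:sticky-Pic} demands a lift with $K$ of characteristic $0$.

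First I would choose a smooth surjective atlas $U\to\ms M$ and lift the chain to points $\xi_0',\ldots,\xi_N'\in U$ preserving the specialization relations (possibly after base changes on the local rings $A_i$ involved, so that the image of each lifted generic point hits $\xi_{i+1}'$). Transitivity of specialization in the scheme $U$ then places all of the $\xi_i'$ in the closure of $\{\xi_N'\}$ and hence in $\spec\mc O_{U,\xi_0'}$, yielding a chain of prime ideals
\[
\mf p_N\subset\mf p_{N-1}\subset\cdots\subset\mf p_1\subset\mf m
\]
in $A:=\mc O_{U,\xi_0'}$, with $\mf p_i$ corresponding to $\xi_i'$. The quotient $A/\mf p_N$ is a local Noetherian domain whose generic point is $\xi_N'$ and whose closed point is $\xi_0'$; dominate it by a DVR and let $R$ be the $\mf m$-adic completion. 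Pulling the universal K3 surface and tautological invertible sheaves back along $\spec R\to U\to\ms M$ produces a relative K3 surface $\mc X/R$ together with $\ms L_1,\ldots,\ms L_m\in\Pic(\mc X)$, i.e.\ a homomorphism $E\to\Pic(\mc X)$. Because $K:=\operatorname{Frac}(R)$ contains $\kappa(\xi_N)$, it is of characteristic $0$; because the residue field $\kappa$ of $R$ contains $k=\kappa(\xi_0)$, the closed fiber is canonically $X\otimes_k\kappa$.

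To check that $E\to\Pic(\mc X_{\widebar K})$ is an isomorphism, note first that it is injective (the $\ms L_i$ remain independent on the geometric generic fiber because they already are on the special fiber). For surjectivity, the geometric generic point $\spec\widebar K\to\ms M$ lies over $\xi_N$; since the geometric Picard rank of a K3 surface is invariant under extensions of algebraically closed fields of the same characteristic, $\operatorname{rk}\Pic(\mc X_{\widebar K})=\operatorname{rk}\Pic_{\text{geom}}(\xi_N)=\operatorname{rk}E$. The specialization map associated to the smooth proper relative K3 $\mc X/R$ then realizes $\Pic(\mc X_{\widebar K})$ as a subgroup of $\Pic(\mc X_{\widebar\kappa})=\Pic(X)$ containing $E$ and of the same rank as $E$; the saturation hypothesis on $E\subset\Pic(X)$ forces this subgroup to equal $E$. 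The same argument applied to the structure map $\Pic(\mc X)\hookrightarrow\Pic(\mc X_{\widebar K})$ shows that $E\to\Pic(\mc X)$ is also an isomorphism.

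The main technical obstacle is the first bookkeeping step: converting a chain of ``specializations along arbitrary local rings $A_i$'' into one chain of primes in a single local ring of an atlas. Modulo this topological reduction, the rest is a routine domination-and-completion argument combined with standard specialization properties of the Picard scheme in smooth proper families of K3 surfaces. A secondary subtlety is that the lemma really uses the characteristic-$0$ hypothesis on $\xi_N$, which must be arranged by the later construction of the chain itself.
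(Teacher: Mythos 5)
Your proposal is correct and is essentially the paper's argument: pass to a smooth cover, use that specializations/generizations lift along it to turn the chain into a specialization $\xi_N \rightsquigarrow \xi_0$ in a scheme, dominate the resulting local Noetherian domain by a (complete) dvr, and pull back the universal family; the Picard-group and characteristic-zero verifications you spell out are exactly the ``given properties'' the paper leaves implicit. The only cosmetic difference is that the paper first invokes transitivity of specialization in $|\ms M|$, so that only the single specialization from a point over $\xi_N$ to a point over $\xi_0$ needs to be lifted to a smooth cover of the closure of the residual gerbe at $\xi_N$, rather than lifting the whole chain to one atlas.
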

\begin{proof}
	Indeed, one then finds that $\xi_0$ is in the closure $\ms N$ of the residual gerbe of $\ms M$ supported at $\xi_N$. Choosing any smooth cover of $\ms N$, we find that a point over $\xi_N$ will specialize to a point over $\xi_0$. Since $\ms N$ is locally Noetherian, we can then dominate the local ring by a dvr with the given properties, as desired.
\end{proof}

In other words, it suffices to prove the result in several generizations.
\begin{remark}
	This method only works because the stack $\ms M$ is globally defined and has a nice local structure. A proof of Proposition \ref{P:sticky-Pic} using purely formal methods is significantly more complex than the global proof offered here.
\end{remark}

In light of cases (i) and (ii) of Proposition \ref{P:sticky-Pic} which are already known, to complete the proof of \ref{P:sticky-Pic} in the supersingular case it suffices to prove the following:

\begin{lem}
	\label{L:reduce-height}
	If $k$ has positive characteristic, the object $(X,L_1,\ldots,L_m),$ with $m\leq 10$,  is the specialization of an object $(X',L_1',\ldots,L_m')$ with $X'$ a \text{\rm K3}  surface of finite height defined over an extension of $k$.
\end{lem}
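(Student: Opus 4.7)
If $X$ has finite height there is nothing to prove (take $X' = X$), so the plan handles the supersingular case by exhibiting a finite-height deformation of $(X, L_1, \ldots, L_m)$ within the characteristic $p$ fiber of the formal universal deformation $\Def_X = \Spf W\[t_1, \ldots, t_{20}\]$.

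The key step is a dimension count. Each $L_i$ defines a Cartier divisor $D_i \subset \Def_X$ consisting of deformations to which $L_i$ lifts, since the obstruction across a square-zero extension lies in $H^2(\mc O_X) \cong k$. Because lifting the saturated lattice $E = \langle L_1, \ldots, L_m\rangle$ is equivalent to lifting any basis, the locus $D_E := \bigcap_i D_i$ has codimension at most $\rk E \leq 9$ in $\Def_X$, so its characteristic $p$ fiber has dimension at least $11$. On the other hand, by \cite[Proposition 14]{Og99} the supersingular locus in $\Def_X \otimes k$ has dimension exactly $9$. Hence some irreducible component $Z$ of $D_E \otimes k$ is not contained in the supersingular locus, and the K3 surface at the generic point of $Z$ has finite height.

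To convert this into a specialization in the sense of Lemma \ref{L:old-time}, I would use the ampleness of $L_1$ and Grothendieck existence to algebraize the formal family over $D_E$, then dominate the local ring of $D_E \otimes k$ at the generic point of $Z$ by a dvr $R$. The generic fiber of the resulting family over $\Spec R$ gives an object $\xi_1 = (X', L_1', \ldots, L_m')$ of $\ms M$ over the fraction field of $R$ with $X'$ of finite height, and its closed fiber recovers $\xi_0 = (X, L_1, \ldots, L_m)$, so $\xi_0$ is a specialization of $\xi_1$ as required. The main (and essentially only) obstacle is the comparison of the two dimensions $11$ and $9$; this is precisely where the hypothesis $\rk E \leq 9$ enters. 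Everything else is routine deformation theory and algebraization.
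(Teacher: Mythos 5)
Your argument is essentially the paper's: both cut out the locus where the lattice deforms by $m\leq 9$ divisors in the universal deformation space (Deligne's observation), compare its dimension (at least $10$ in characteristic $p$) with the $9$-dimensional infinite-height locus from \cite[Proposition 14]{Og99}, and use ampleness of $L_1$ to algebraize so that the generic point of a component not contained in that locus provides the finite-height generization. The only cosmetic difference is that the paper works directly in the equicharacteristic deformation space $\Spf k\[x_1,\ldots,x_{20}\]$ rather than passing through the deformation space over $W$ and taking its characteristic-$p$ fiber.
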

\begin{proof}
	Let $D_X=\spec k\[x_1,\ldots,x_{20}\]$ denote the formal universal deformation space of $X$ over $k$. As observed in \cite{D}, each invertible sheaf $L_i$ determines a divisor in $D_X$. In particular, the complete local ring of $\ms M\tensor k$ at $(X,L_1,\ldots,L_m)$ is determined by $m$ equations $f_1,\ldots,f_m$. Moreover, since $L_1$ is ample, the universal deformation $(\mc X,\mc L_1,\ldots,\mc L_m)$ of the tuple $(X,L_1,\ldots,L_m)$ over $$\Spf k\[x_1,\ldots,x_{20}\]/(f_1,\ldots,f_m)$$ is algebraizable. 

	We know by  \cite[Proposition 14(2)]{Og99} that the closed subscheme of deformations of $X$ that have infinite height has dimension $9$. Thus, since $E$ has rank at most $10$ we see that $T:=\Spec k\[x_1,\ldots,x_{20}\]/(f_1,\ldots,f_m)$ has dimension at least $10$ and cannot lie entirely in this closed subscheme. The generic point of $T$ parametrizes a tuple with $X'$ of finite height, as desired.
\end{proof}

This completes the proof of \ref{P:sticky-Pic}. \qed

\providecommand{\bysame}{\leavevmode\hbox
to3em{\hrulefill}\thinspace}


\begin{thebibliography}{10}

\bibitem[A]{Artin}
M.~Artin,
\emph{Algebraization of formal moduli. I}. 1969 Global Analysis (Papers in Honor of K. Kodaira) pp.~21--71, Univ. Tokyo Press, Tokyo

\bibitem[BO]{BO}
P.~Berthelot and A.~Ogus,
\emph{$F$-isocrystals and de Rham cohomology. I}.
Invent. Math. \textbf{72} (1983), no. 2, 159--199.

%\bibitem[BM]{BM}
%E.~Bombieri and D.~Mumford, \emph{Enriques' classification of surfaces in char.~$p$. II.}, Complex analysis and algebraic geometry, pp.~23--42.  Iwanami Shoten, Tokyo, 1977.

\bibitem[B]{B}
T.~Bridgeland, \emph{Equivalences of triangulated categories and Fourier-Mukai transforms},
Bull. London Math. Soc. \textbf{31} (1999), no. 1, 25--34.

\bibitem[BrM]{BrM}
T.~Bridgeland and A.~Maciocia,
\emph{Complex surfaces with equivalent derived categories.},
Math. Z. \textbf{236} (2001), no. 4, 677--697.

\bibitem[C]{C}
A. C\u{a}ld\u{a}raru, \emph{The Mukai pairing, II: the Hochschild-Kostant-Rosenberg isomorphism,}
Adv. in Math. \textbf{194} (2005), 34--66.

\bibitem[CS]{CS}
A. Canonaco and P. Stellari, \emph{Twisted Fourier-Mukai functors},  Adv. Math. \textbf{212} (2007),  484--503. 

\bibitem[Ch]{Ch}
F. Charles, \emph{The Tate conjecture for K3 surfaces over finite fields}, Invent. Math. \textbf{194}  (2013),  119--145

\bibitem[D]{D}
P.~Deligne, \emph{Rel\`evement des surfaces \text{\rm K3}  en caract\'eristique nulle}
Prepared for publication by Luc Illusie. Lecture Notes in Math. \textbf{868},  Algebraic surfaces (Orsay, 1976--78), pp. 58--79, Springer, Berlin-New York, 1981.

\bibitem[EGA]{EGA}
J.~Dieudonn\'e and A.~Grothendieck, \emph{\'{E}l\'ements de
g\'eom\'etrie
  alg\'ebrique},  Inst. Hautes \'Etudes Sci.
  Publ. Math. \textbf{4, 8, 11, 17, 20, 24, 28, 32} (1961--1967).

\bibitem[G-M]{G-M}
H.~Gillet and W.~Messing, \emph{Cycle classes and Riemann-Roch for crystalline cohomology}, Duke Math. J. \textbf{55} (1987), 501--538.

\bibitem[G]{Gros}
M. Gros, \emph{Classes de Chern et classes de cycles en cohomologie de Hodge-Witt logarithmique}, M\'em. Soc. Math. France \textbf{21} (1985). 

\bibitem[HLOY]{HLOY}
S. Hosono, B.  Lian, K. Oguiso,  S.-T. Yau,
\emph{Fourier-Mukai number of a \text{\rm K3}  surface}, Algebraic structures and moduli spaces, 177--192,
CRM Proc. Lecture Notes \textbf{38}, Amer. Math. Soc., Providence, RI, 2004.

\bibitem[H1]{huy}
D.~Huybrechts,
\emph{Derived and abelian equivalence of \text{\rm K3}  surfaces}, J. Algebraic Geom. \textbf{17} (2008), no. 2, 375--400.

\bibitem[H2]{H}
D.~Huybrechts, \emph{Fourier-Mukai transforms in algebraic geometry}, Oxford University Press (2006).

\bibitem[HL]{HL}
D.~Huybrechts and M.~Lehn,
\emph{The geometry of moduli spaces of sheaves}.
Second edition. Cambridge Mathematical Library. Cambridge University Press (2010).




\bibitem[Ill]{Ill}
L. Illusie, \emph{Crystalline cohomology}, in Motives (Seattle, WA, 1991), 43�70, Proc. Sympos. Pure Math. \textbf{55}, Part 1, Amer. Math. Soc., Providence, RI, 1994.

\bibitem[KvdG]{KvdG}
T.~Katsura and G.~van der Geer,
\emph{On a stratification of the moduli of \text{\rm K3}  surfaces}, J. Eur. Math. Soc. \textbf{2} no. 3 (2000): 259--290.

\bibitem[Kle]{Kleiman}
S. Kleiman, \emph{The standard conjectures}, in  Motives (Seattle, WA, 1991), 3--20, 
Proc. Sympos. Pure Math. \textbf{55}, Part 1, Amer. Math. Soc., Providence, RI, 1994. 

\bibitem[La]{Lang}
S. Lang, \emph{Algebra}, third edition, Addison-Wesley (1994).

\bibitem[Lan]{Langer}
A. Langer, \emph{Semistable sheaves in positive characteristic}, Annals of Math. \textbf{159} (2004), 251--276.

\bibitem[Lang]{Langton}
S. Langton,  \emph{Valuative criteria for families of vector bundles on algebraic varieties} Ann. of Math. \textbf{101} (1975), 88�110.

\bibitem[Lau]{Laumon}
G. Laumon, \emph{Homologie \'etale}, Expos\'e VIII in S\'eminaire de g\'eom\'etrie analytique (Ecole Norm. Sup.,
Paris, 1974-75), pp. 163--188. Asterisque \textbf{36--37}, Soc. Math. France, Paris (1976).

\bibitem[L]{L}
M.~Lieblich, \emph{Moduli of complexes on a proper morphism}, J. Algebraic Geom. \textbf{15} (2006), 175--206.

\bibitem[L2]{L2}
M.~Lieblich, \emph{Twisted sheaves and the period-index problem}, Compos. Math. \textbf{144} (2008), no. 1, 1--31.

\bibitem[L3]{LL}
M. Lieblich, \emph{Moduli of twisted sheaves}, 
Duke Math. J. \textbf{138} (2007),  23--118.

\bibitem[LM]{LM}
M.~Lieblich and D.~Maulik, \emph{A note on the cone conjecture for \text{\rm K3}  surfaces in positive characteristic}, submitted for publication, 2011.

\bibitem[MM]{MM}
T.~Matsusaka and D.~Mumford,
\emph{Two fundamental theorems on deformations of polarized varieties},
Amer. J. Math. \textbf{86} (1964) 668--684.

\bibitem[MP]{MP}
D. Maulik and B. Poonen, \emph{N\'eron-Severi groups under specialization},  preprint.

\bibitem[M]{M}
S.~Mukai,
\emph{On the moduli space of bundles on \text{\rm K3}  surfaces. I}. Vector bundles on algebraic varieties (Bombay, 1984), 341--413,
Tata Inst.\ Fund.\ Res.\ Stud.\ Math., 11, Tata Inst. Fund. Res., Bombay, 1987.

\bibitem[Og]{Ogus}
A.~Ogus, \emph{A Crystalline Torelli Theorem for Supersingular \text{\rm K3}  Surfaces}, in `Arithmetic and Geometry', Progress in Mathematics \textbf{36}, Birkh\"auser 1983.

\bibitem[Og99]{Og99}
A.~Ogus,
\emph{Singularities of the height strata in the moduli of \text{\rm K3}  surfaces.}, Moduli of abelian varieties (Texel Island, 1999), 325--343,
Progr. Math. \textbf{195}, Birkh\"auser, Basel, 2001.


\bibitem[Og2]{Ogus2}
A.~Ogus, \emph{Supersingular \text{\rm K3}  crystals}, in "Journ\'ees de G\'eom\'etrie Alg\'ebrique de Rennes", Ast\'erisque \textbf{64} (1979), 3--86.


\bibitem[Or]{Orlov}
D.~Orlov, \emph{Equivalences of triangulated categories and \text{\rm K3}  surfaces}, \emph{J. Math. Sci. (New York)}, \textbf{84} (1997), 1361--1381.

\bibitem[Or2]{Orlov2}
D.~Orlov, \emph{Derived categories of coherent sheaves and equivalences between them}, Russian Math. Surveys \textbf{58} (2003), 511--591.

\bibitem[P]{P}
K. Pera, \emph{The Tate conjecture for K3 surfaces in odd characteristic}, preprint (2013).

\bibitem[Pet]{Pet}
D. P\'etrequin, \emph{Classes de Chern et classes de cycles en cohomologie rigide}, Bull. Soc. Math. France \textbf{131} (2003), 59--121.

\bibitem[W]{Weibel}
C.~Weibel, \emph{An introduction to homological algebra}, Cambridge studies in advanced mathematics \textbf{38}, Cambridge U. Press (1994).

\end{thebibliography}
\end{document}